\newtheorem{thrm}{Theorem}[section]
\newtheorem{cor}[thrm]{Corollary}
\newtheorem{lemma}[thrm]{Lemma}
\theoremstyle{definition}
\newtheorem{definition}[thrm]{Definition}
\newcommand{\CP}{\mathcal P}
\newcommand{\CV}{\mathcal V}
\newcommand{\CVbn}{\bar{\mathcal V}^n}
\newcommand{\CA}{\mathcal A}
\newcommand{\CAbn}{\bar{\mathcal A}^n}
\newcommand{\CR}{\mathcal R}
\newcommand{\CRb}{\bar{\mathcal R}}
\newcommand{\CRbn}{\bar{\mathcal R}^n}
\newcommand{\CRbm}{\bar{\mathcal R}^m}
\newcommand{\CGbn}{\bar{\mathcal G}^n}
\newcommand{\CGbm}{\bar{\mathcal G}^m}
\newcommand{\CD}{\mathcal D}
\newcommand{\CDbn}{\bar{\mathcal D}^n}
\newcommand{\CB}{\mathcal B}
\newcommand{\E}{\mathbb E}
\newcommand{\N}{\mathbb N}
\newcommand{\bbM}{\mathbb M}
\newcommand{\R}{\mathbb R}
\newcommand{\bw}{{\bf w}}
\newcommand{\der}{\text{d}}
\newcommand{\ptime}{[0,\infty)}
\newcommand{\Rp}{{\mathbb R}_+}
\newcommand{\C}{{\bf C}}
\newcommand{\bL}{{\bf L}}
\newcommand{\bD}{{\bf D}}
\newcommand{\Ebn}{\bar E^n}
\newcommand{\bP}{{\mathbb P}}
\newcommand{\bE}{{\mathbb E}}
\newcommand{\M}{{\bf M}}
\newcommand{\K}{{\bf K}}
\newcommand{\I}{{\bf I}}
\newcommand{\J}{{\bf J}}
\def\wk{\stackrel{w}{\rightarrow}}
\newcommand\z{{\mathcal Z}}
\newcommand{\zn}{{\z}^n}
\newcommand{\bzn}{\bar{\z}^n}
\newcommand{\bzm}{\bar{\z}^m}
\newcommand\dead{{\mathcal D}}
\begin{document}

\begin{frontmatter}

\title{Fluid limits for overloaded multiclass FIFO~single-server
queues with general abandonment}
\runtitle{Overloaded FIFO queues with general abandonment}

\author[a]{\fnms{Otis B.} \snm{Jennings}\ead[label=e1]{otisj@alumni.princeton.edu}}
\and
\author[b]{\fnms{Amber L.} \snm{Puha}\corref{}\ead[label=e2]{apuha@csusm.edu}\thanksref{t2}}
\thankstext{t2}{Research supported in part by UCLA-IPAM}
\address[a]{Otis B.\ Jennings\\
323 E.\ Chapel Hill St\\
 Unit 1571\\
Durham NC, 27702-2469\\
 \printead{e1}}
\address[b]{Amber L.\ Puha\\
Department of Mathematics\\
California State University San Marcos\\
333 S.\ Twin Oaks Valley Road\\
San Marcos, CA 92096-0001\\
\printead{e2}}

\affiliation{California State University San Marcos}

\runauthor{O.B. Jennings and A.L. Puha}

\begin{abstract}
We consider an overloaded multiclass nonidling first-in-first-out
single-server queue with abandonment. The interarrival times, service
times, and deadline times are sequences of independent and identically,
but generally distributed random variables. In prior work, Jennings and
Reed studied the workload process associated with this queue. Under
mild conditions, they establish both a functional law of large numbers
and a functional central limit theorem for this process. We build on that
work here. For this, we consider a more detailed description of the
system state given by $K$ finite, nonnegative Borel measures on the
nonnegative quadrant, one for each job class. For each time and job
class, the associated measure has a unit atom associated with each job
of that class in the system at the coordinates determined by what are
referred to as the residual virtual sojourn time and residual patience
time of that job. Under mild conditions, we prove a functional law of
large numbers for this measure-valued state descriptor. This yields
approximations for related processes such as the queue lengths and
abandoning queue lengths. An interesting characteristic of these
approximations is that they depend on the deadline distributions in
their entirety.
\end{abstract}

\begin{keyword}[class=AMS]
\kwd[Primary ]{60B12}
\kwd{60F17}
\kwd{60K25}
\kwd[; secondary ]{68M20}
\kwd{90B22}.
\end{keyword}

\begin{keyword}
\kwd{Overloaded queue}
\kwd{abandonment}
\kwd{first-in-first-out}
\kwd{multiclass queue}
\kwd{measure-valued state descriptor}
\kwd{queue-length vector}
\kwd{fluid limits}
\kwd{fluid model}
\kwd{invariant states}.
\end{keyword}

\received{\smonth{10} \syear{2012}}

\end{frontmatter}

\section{Introduction}

Here we consider a single-server queue fed by $K$ arrival streams, each
corresponding to
a distinct job class. Upon arrival, each job declares its service time
and deadline requirements.
If a job doesn't enter service within the deadline time of its arrival,
it abandons the queue before
initiating service. Otherwise it remains in queue until it receives its
full service
time requirement. Nonabandoning jobs are served in a nonidling,
first-in-first-out (FIFO)
fashion. The queue is assumed to be overloaded, i.e., the offered load
exceeds one. In addition, it is assumed that the interarrival times,
service times,
and deadline times are mutually independent sequences of independent and
identically, but generally distributed random variables. One aim of
this work is to provide
approximations for functionals such as the queue-length process.

Queueing systems with abandonment are observed in many applications.
Hence, it is a natural phenomenon to
study. For FIFO single-server queues with abandonment, the earliest
analysis focused on models with exponential
abandonment~\cite{ref:AG}, which is not an ideal modeling assumption.
Thereafter, general abandonment
distributions were considered in~\cite{ref:BH}, which restricted to
Markovian service and arrival processes. Shortly thereafter,
stability conditions were determined in~\cite{ref:BBH} without the
Markovian restriction. In the last decade, there has
been considerable progress with analyzing the critically loaded FIFO
single-server queue with general abandonment
\cite{ref:GlW, ref:RW}.
Here we focus on general abandonment for the multiclass overloaded FIFO
single-server model described above.

In~\cite{ref:JR}, Jennings and Reed study the workload process
associated with this
queue under the assumption that the abandonment distributions are
continuous. Note that
the workload process is well defined since each job declares its
deadline upon arrival. In
particular, the value of the workload process is increased by a job's
service time at its arrival
time if and only if the value of the workload process immediately
before the job's arrival is strictly less
than the job's deadline time. As usual, the value of the workload
decreases at
rate one while the workload is positive. Under mild conditions, they
establish both a functional
law of large numbers and a functional central limit theorem for this process.

In this paper, we build on that work. One outcome
is to provide a fluid approximation for the vector-valued
queue-length process. Because the abandonment distributions are not necessarily
exponential, the queue-length vector together with the vector of
residual interarrival times,
class in service, and residual service time does not provide a
Markovian description of the
system state. Additional information about the residual deadline times
is also needed.
We track this information using a measure-valued state descriptor,
which is defined precisely in Section~\ref{sec:sm}. We give an informal
description here.
The state $\z(t)$ of the
system at time $t$ consists of $K$ finite, nonnegative Borel measures
on $\Rp^2$, one for each
class, where $\Rp$ denotes the nonnegative real numbers. Each measure
consists entirely of
unit atoms, one corresponding to each job of that class in the system.
The coordinates of each
atom are determine by two quantities associated with the job. The first
coordinate is the residual virtual sojourn time of that job, which is the
amount of work in the system
(the cumulative residual service times of jobs that don't abandon)
associated with this job and the jobs that arrived ahead of it.
The second coordinate is the residual patience time. This is given by
the job's deadline minus the time in system if it will abandon before
entering service, and is given by
the job's deadline plus its service time minus the time in system
otherwise. Note that the atoms associated
with jobs that will eventually be served
are initially located in $\Rp^2$ above the line of slope one
intersecting the origin, while the atoms
associated with
jobs that won't be served are initially placed on or below this line.
With this description, new jobs
arrive and the corresponding unit atoms are added at the appropriate
coordinates of the system state.
Further, each coordinate of each atom decreases at rate one until the
unit atom reaches one of the
coordinate axes and exits the system. Jobs associated with atoms that
hit the vertical coordinate
axis exit due to service completion. Jobs associated with atoms that
ultimately hit the horizontal
coordinate axis exit due to abandonment (see Figure~\ref{fig:state}).

Our choice of state descriptor is reminiscent of the one used by
Gromoll, Robert, and Zwart~\cite{ref:GRZ}
to analyze an overloaded processor sharing (PS) queue with abandonment.
One distinction is that their work
is for a single-class queue, and therefore has one coordinate rather
than $K$. Another is that the first coordinate
for the unit atom associated with a given job in the PS system is
simply the job's residual service time. We use the residual virtual
sojourn time to determine the first coordinate since it captures the
order of arrivals, which is needed for FIFO.
The evolution of the state descriptor in~\cite{ref:GRZ} is slightly
more complicated than the one used here.
In both cases, the residual patience times decrease at rate one, but in
\cite{ref:GRZ} the service times decrease at rate one
over the number of jobs in the queue. Hence, the nice relationship with
the line of slope one intersecting the origin
present in the FIFO model is not present in the PS model. However, in
both models it is true that hitting a coordinate
axis corresponds to exiting the system with the vertical axis being
associated with service completion and the horizontal
axis being associated with abandonment. So this work provides an
example of how the modeling framework
developed for analyzing PS with abandonment can be adapted to yield an
analysis in another abandonment setting.

Measure valued processes have been used rather extensively for modeling
many server queues with and without abandonment
(see~\cite{ref:KangPang,ref:KangR2010,ref:KangR2012,ref:KaspiR},
and~\cite{ref:PW}).
An important distinction between single-server and many server
first-come-first-serve queues is that the later is not
first-in-first-out. Therefore the measure valued descriptor and
analysis given here is quite different from that found in the many
server queues literature.

We develop a fluid approximation for this measure-valued state
descriptor, which yields
fluid approximations for the queue-length vector and other functionals
of interest.
Similarly to the approximations derived in~\cite{ref:JR} and~\cite{ref:W2006},
these approximations depend on the entire abandonment distribution of
each job class.
We begin by introducing an associated fluid model, which can be viewed
as a formal law of large numbers
limit of the stochastic system. Hence fluid model solutions are
$K$-dimensional measure-valued
functions with each coordinate taking values the space of finite,
nonnegative Borel measures
on $\Rp^2$ that satisfy an appropriate fluid model equation (see
\eqref{fdevoeq2}). We analyze the behavior of fluid
model solutions. Through this analysis we identify a nonlinear mapping
from the fluid workload
to the fluid queue-length vector (see \eqref{eq:fq}). Using the nature
of fluid model solutions,
this mapping is refined to yield the approximations for the number of
jobs of each class in queue
that will and will not abandon (see \eqref{eq:fn} and \eqref{eq:fa}).
In a similar spirit,
we obtain approximations for the number of jobs of each class in system
of a certain age or older
(see \eqref{eq:age1} and \eqref{eq:age2}). In addition, we characterize
the invariant states for this fluid model
(Theorem~\ref{thrm:is}).

Next we justify interpreting fluid model solutions and functionals
derived from them
as first order approximations of prelimit functionals in the stochastic
model by proving a fluid limit theorem
(Theorem~\ref{thrm:flt}). It states that under mild assumptions the
fluid scaled state
descriptors for the stochastic system converge in distribution to
measure-valued functions
that are almost surely fluid model solutions.
A basic element of our fluid limit result is the nature of the scaling employed.
Consistent with the framework in~\cite{ref:JR}, we accelerate both the
arrival process and the service rates, while leaving the abandonment
times unchanged.
One should think of a concomitant speeding up of the server's
processing rate to accommodate the increased customer demand; the
content of the work for any particular customer is the same.
We assume any given customer is unaffected by the increase in the
number of fellow customers, but rather it is the time
in queue that triggers abandonment.
Hence, abandonment propensity does not require adjusting as the other
system parameters increase.

The paper is organized as follows. We conclude this section with a
listing of our notation.
The formal stochastic and fluid models are given Section~\ref{sec:m}.
Then, in Section~\ref{sec:main}, we present the main results (Theorems
\ref{thrm:eu},~\ref{thrm:flt} and~\ref{thrm:is})
and several approximations derived from our fluid model. Theorems~\ref
{thrm:eu} and~\ref{thrm:is} are proved
in Section~\ref{sec:prfthrmeu}.
In the final two sections, we provide the proof
of Theorem~\ref{thrm:flt}. The proof of tightness is presented in
Section~\ref{sec:tightness}, while the
characterization of fluid limit points as fluid model solutions is
presented in Section~\ref{sec:char}.
For this, we prove a functional law of large numbers for a sequence of
measure-valued processes
that we refer to as residual deadline related processes (see Lemma \ref
{lem:ResDeadFLLN}).
Since the residual deadline related processes don't include information
about the service discipline,
the result in Lemma~\ref{lem:ResDeadFLLN} may be of more general
interest for analyzing queues with abandonment.

\subsection{Notation}

The following notation will be used throughout the paper.
Let $\N$ denote the set of strictly positive integers and
let $\R$ denote the set of real numbers.
For $x, y\in\R$, $x\vee y=\max(x, y)$, $x\wedge y=\min(x, y)$, and
$x^+=x\vee0$.
For $x\in\R$, $\lfloor x\rfloor$ denotes the largest integer less than
or equal to $x$ and
$\lceil x\rceil$ denotes the smallest integer greater than or equal to $x$.
For $x\in\R$, $\Vert x \Vert=\vert x\vert$ and for $x\in\R^2$,
$\Vert
x \Vert=\sqrt{x_1^2+x_2^2}$.

The nonnegative real numbers $[0, \infty)$ will be denoted by $\Rp$.
Let $\CB_1$ denote the Borel subsets of $\Rp$
and $\CB_2$ denote the Borel subsets of $\Rp^2$.
On occasion we will use the notation $\Rp^1$ in place of $\Rp$.
For a Borel set $B\in\CB_i$, $i=1, 2$, we denote the indicator function
of the set $B$ by $1_{B}$.
For $i=1, 2$, $B\in\CB_i$ and $x\in\Rp^i$, let $B_x$ denote the
$x$-shift of the set $B$, which is given by
%
%
\begin{equation}\label{def:xshift}
B_x=\{ y \in\Rp^i : y-x\in B\}.
\end{equation}
Given $x\in\Rp$ and $B\in\CB_2$, we adopt the following shorthand notation:
\[
B_x=B_{(x, x)}.
\]
For $i=1, 2$, $B\in\CB_i$, and $\kappa>0$,
let $B^{\kappa}$ denote the $\kappa$-enlargement of $B$, which is
given by
%
%
\begin{equation}\label{def:kappa}
B^{\kappa}=\left\{ x\in\Rp^i : \inf_{y\in B} \Vert x-y\Vert
<\kappa\right
\}.
\end{equation}
Notice that given $i=1, 2$ and $B\in\CB_i$, $(B^{\kappa})_x\subseteq
(B_x)^{\kappa}$,
but $(B^{\kappa})_x$ and $(B_x)^{\kappa}$ are not necessarily the
same set.
In particular, the set resulting from shifting before enlarging may
contain additional points.
We adopt the convention that $B_x^{\kappa}=(B_x)^{\kappa}$, i.e.,
$B_x^{\kappa}$
is the larger of the two sets.

For $i=1, 2$, let $\C_b(\Rp^i)$ denote the set of bounded continuous
functions from $\Rp^i$ to $\R$. Given a finite, nonnegative Borel
measure $\zeta$ on $\Rp^i$, let $\bL(\zeta)$ denote the set of Borel
measurable functions from $\Rp^i$ to $\R$ that are integrable
with respect to $\zeta$. For $g\in\bL(\zeta)$, we define
$\langle g, \zeta\rangle =\int_{{\mathbb R}_+^i}g \der\zeta$ and
adopt the shorthand notation $\zeta(B)=\langle 1_B, \zeta\rangle $ for
$B\in
\CB_i$.
In addition, let $\chi:\Rp\to\R$ be the identify map $\chi(x)=x$ for
all $x\in\Rp$.
Given a finite, nonnegative Borel measure $\zeta$ on $\Rp$, if $\chi
\in
\bL(\zeta)$,
i.e., if $\langle \chi, \zeta\rangle <\infty$, we say that $\zeta$
has a finite
first moment.

For $i=1, 2$, let $\M_i$ denote the set of finite, nonnegative Borel
measures on
$\Rp^i$. The zero measure in $\M_i$ is denoted by ${\bf0}$. For
$x\in
\Rp$,
$\delta_x\in\M_1$ is the measure that puts one unit of mass
at $x$. Similarly, for $x, y\in\Rp$, $\delta_{(x, y)}\in\M_2$ is the
measure that puts one unit of mass at $(x, y)$.
The set $\M_i$ is endowed with the weak topology, that is, for
$\zeta^n, \zeta\in\M_i$, $n\in\N$, we have $\zeta^n\wk\zeta$ as
$n\rightarrow\infty$ if and only if $\langle g, \zeta^n\rangle\rightarrow \langle g, \zeta\rangle $
as $n\rightarrow\infty$, for all $g\in\C_b(\Rp^i)$.
With this topology, $\M_i$ is a Polish space~\cite{ref:P}. Also define
\[
\M_i^K=\{ (\zeta_1, \dots, \zeta_K) : \zeta_k\in\M_i\hbox{ for
}1\le k\le
K\}.
\]
Then $\M_i^K$ endowed with the product topology is also a Polish space.
Given $\zeta\in\M_i^K$ and $g\in\cap_{k=1}^K\bL(\zeta_k)$, we define
the shorthand notation
\[
\left\langle g, \zeta\right\rangle =(\left\langle g, \zeta_1\right\rangle , \dots, \left\langle g,
\zeta_K\right\rangle ).
\]
Given $i=1, 2$ and $\zeta\in\M_i^K$, it will be handy to introduce the
notation $\zeta_+\in\M_i$
for the superposition measure, which is given by
%
%
\begin{equation}\label{def:sp}
\zeta_+(B)=
\sum_{k=1}^K \zeta_k(B), \qquad\hbox{for all }B\in\CB_i.
\end{equation}

Let $\M_{1, 0}$ denote the subset of $\M_1$ containing those measures
that assign zero
measure to the set $\{0\}$. Similarly, let $\M_{2, 0}$ denote the subset
of $\M_2$ containing
those measures that assign measure zero to the set
%
%
\begin{equation}\label{def:C}
C=\Rp\times\{0\}\cup\{0\}\times\Rp.
\end{equation}
For $x, y\in\Rp$, let $\delta_x^+\in\M_{1, 0}$ and $\delta
_{(x, y)}^+\in\M
_{2, 0}$ respectively denote the measures that put a unit mass at the
point $x$ if $x>0$ and $(x, y)$ if $x, y>0$, and are
the zero measure otherwise.
For $i=1, 2$, the set $\M_{i, 0}^K$ is
defined analogously to $\M_i^K$ except that $\zeta_k\in\M_{i, 0}$ for
$1\le k\le K$.
In addition, let $\CB_{1, 0}$ denote those sets in $\CB_1$
that do not contain zero.
Similarly, let $\CB_{2, 0}$ denote those sets in $\CB_2$
that do not meet the set $C$.

Finally, we will use ``$\Rightarrow$'' to denote convergence in
distribution of random
elements of a metric space. Following Billingsley~\cite{ref:B}, we will
use $\bP$ and $\E$
respectively to denote the probability measure and expectation operator
associated with
whatever space the relevant random element is defined on. All
stochastic processes used
in this paper will be assumed to have paths that are right continuous
with finite left limits (r.c.l.l.).
For a Polish space $\mathcal{S}$, we denote by $\bD([0, \infty),
\mathcal{S})$
the space of r.c.l.l.\ functions
from $[0, \infty)$ into~$\mathcal{S}$, and we endow this space with the
usual Skorohod $J_1$-topology
(cf.~\cite{ref:EK}). There are six Polish spaces that will be
considered in this paper: $\R$, $\Rp$,
$\M_1$, $\M_1^K$, $\M_2$, and $\M_2^K$.

\section{The Stochastic and Fluid Models}\label{sec:m}

\subsection{The Stochastic Model}\label{sec:sm}

In this section, we define the model of the GI/GI/1
+ GI queue serving $K$ distinct customer classes, which
will be used for the remainder of the paper.

\paragraph{Initial condition and associated system dynamics}
The initial condition specifies the number $Z_+(0)$ of jobs in the
queue at time zero, as well as the initial virtual sojourn time, initial
patience time and class of each job. Assume that $Z_+(0)$ is
nonnegative integer-valued random variable that is finite
almost surely. The initial virtual sojourn times, initial
patience times
and classes are the first, second, and third coordinates respectively
of the first $Z_+(0)$ elements of the random sequence
$\{(\tilde w_j, \tilde p_j, k_j)\}_{j\in\N}\subset\Rp\times\Rp
\times\{
1, \dots, K\}$. For $1\le j\le Z_+(0)$,
the initial job with initial virtual sojourn time $\tilde w_j$, initial
patience time $\tilde p_j$, and class $k_j$ is called job $j$.

We assume that the elements of the sequence $\{\tilde w_j\}_{j\in\N}$
are finite, positive, and
nondecreasing, which reflects the fact that the service discipline is
first-in-first-out. In particular, the job with the smallest index is
regarded as having arrived to the system before all
other jobs, and therefore has the smallest initial virtual sojourn time.
This is the job currently in service and $\tilde w_1$ represents the time
until its service is completed.
The remaining jobs are waiting in the
queue in order. For $2\le j \le Z_+(0)$, $\tilde w_j$ represents the
amount of time that job $j$
will remain in the system, provided that it doesn't abandon before
entering service.

We assume that the elements of the sequence $\{\tilde p_j\}_{j\in\N}$ are
finite and positive, which reflects the fact that none of the jobs
would be
regarded as having abandoned the queue by time zero.
For $1\le j\le Z_+(0)$ such that $\tilde p_j>\tilde w_j$,
job $j$ is sufficiently patient to wait in the queue until service completion.
For $1\le j\le Z_+(0)$ such that $\tilde p_j\le\tilde w_j$,
job $j$ abandons the queue at time $\tilde p_j$ before entering service.
We assume that $\tilde w_1<\tilde p_1$, which reflects the fact that
job 1
is presently in service and is therefore patient enough to stay in the queue
until service completion. For $2\le j\le Z_+(0)$, we assume that
$\tilde p_j\le\tilde w_j$ if and only if $\tilde w_j=\tilde w_{j-1}$.
When $j$ is such that $\tilde p_j>\tilde w_j$,
one regards $\tilde w_j-\tilde w_{j-1}$ as the service time of job $j$
and this
service time is included in the initial virtual sojourn time of all
jobs $\ell$ such that $j\le\ell\le Z_+(0)$. When $j$ is such that
$\tilde p_j\le\tilde w_j$, a service time for job $j$ is not included
in any of the initial virtual sojourn times.

Since the queue is nonidling and first-in-first-out, all $Z_+(0)$ jobs
in the system at time zero will either abandon or be served by time
$W(0)=\tilde w_{Z_+(0)}$. We
refer to $W(0)$ as the initial workload.\vadjust{\eject}

A convenient way to express the initial condition is to define an
initial random measure $\z(0)\in\M_2^K$. For this, we will find it
convenient to separate the sequence $\{ (\tilde w_j, \tilde p_j, k_j)\}
_{j=1}^{Z_+(0)}$ into $K$ separate
sequences $\{ (\tilde w_{k, j}, \tilde p_{k, j})\}_{j=1}^{Z_k(0)}$, one
for each class. For $1\le k\le K$,
let $Z_k(0)$ denote the number of class $k$ initial jobs. Given
$1\le k \le K$, for $1\le j\le Z_k(0)$, let $i(k, j)$ be the $j$th
smallest index
such that $k_{i(k, j)}=k$ and set $\tilde w_{k, j}=\tilde w_{i(k, j)}$ and
$\tilde p_{k, j}=\tilde p_{i(k, j)}$.
Then, for $1\le k\le K$, let $\z_k(0)\in\M_2$ be given by
\[
\z_k(0)=\sum_{j=1}^{Z_k(0)} \delta_{(\tilde w_{k, j}, \tilde p_{k, j})}^+,
\]
which equals $\mathbf{0}$ if $Z_k(0)=0$. Then let
\[
\z(0)=(\z_1(0), \dots, \z_K(0)).
\]
Our assumptions imply that
$\z(0)$ satisfies
%
%
\begin{equation}\label{eq:InitialConditionFinite}
\mathbf{P}\left(\max_{1\le k\le K}\left\langle 1, \z_k(0)\right\rangle \vee
W(0)<\infty
\right)=1.
\end{equation}
Furthermore,
\[
\mathbf{P}\left(\max_{1\le k\le K}\z_k(0)(C)=0\right)=1.
\]
In particular, $\z(0)\in\M_{2, 0}^K$ almost surely.

\paragraph{Stochastic primitives and associated system dynamics}
The stochastic primitives consist of $K$ exogenous arrival processes
$E_k(\cdot)$, $1\le k\le K$, $K$ sequences of service times $\{
v_{k, i}\}
_{i\in\N}$, $1\le k\le K$,
and $K$ sequences of deadlines $\{d_{k, i}\}_{i\in\N}$, $1\le k\le K$.
We assume that the exogenous arrival processes, the
sequences of service times, and the sequences of deadlines are all
independent of one another.

For a given $1\le k\le K$, the class $k$ arrival process $E_k(\cdot)$
is a rate
$\lambda_k\in(0, \infty)$ renewal process. For $t\in\ptime$, $E_k(t)$
represents the number of class $k$ jobs that arrive to the queue
during the time interval $(0, t]$. We assume that the interarrival times
are strictly positive and denote the sequence of interarrival
times by $\{ \xi_{k, i}\}_{i\in\N}$. Class $k$ jobs arriving after
time zero
are indexed by integers $j>Z_k(0)$. For $t\in\ptime$, let
%
%
\begin{equation}\label{def:A}
A_k(t)=Z_k(0)+E_k(t).
\end{equation}
Then class $k$ job $j$ arrives at time $t_{k, j}=\inf\{t\in\ptime
: A_k(t)\ge j\}$.
Hence, for $j^\prime<j$, $t_{k, j^\prime}\le t_{k, j}$ and we say that
class $k$ job $j^\prime$
arrives before class $k$ job $j$. The inequality is strict for indices
$j>Z_k(0)$. Moreover, for each $j\le Z_k(0)$, $t_{k, j} = 0$.

Given $1\le k\le K$, for each $i\in\N$, the random variable $v_{k, i}$
represents the service time of the $(Z_k(0)+i)$th class $k$ job. That is,
class $k$ job $j>Z_k(0)$ has service time $v_{k, j-Z_k(0)}$. Assume that
the random variables $\{v_{k, i}\}_{i\in\N}$ are strictly positive and
form an independent and identically distributed sequence with finite
positive mean
$1/\mu_k$. Define the class $k$ offered load to be $\rho_k=\lambda
_k/\mu_k$.
We assume that the queue is overloaded. In particular, we assume that
$\rho=\sum_{k=1}^K\rho_k>1$.

Given $1\le k\le K$, for each $i\in\N$, the random variable $d_{k, i}$
represents the deadline of the $(Z_k(0)+i)$th class $k$ job. That is,
class $k$
job $j>Z_k(0)$ has deadline $d_{k, j-Z_k(0)}$. Assume that the random
variables $\{d_{k, i}\}_{i\in\N}$ are strictly positive and form an
independent
and identically distributed sequence of random variables with common
continuous distribution $\Gamma_k$. Assume that the mean $1/\gamma_k$
is finite. Let $F_k(\cdot)$ denote the cumulative distribution function
associated
with $\Gamma_k$, i.e., $F_k(x)=\langle 1_{[0, x]}, \Gamma_k\rangle $
for all $x\in\Rp$.
Denote its complement by $G_k(\cdot)=1-F_k(\cdot)$.

As is the case for jobs in the system at time zero, jobs arriving after
time zero are served in a first-in-first-out, nonidling fashion.
A class $k$ job $j$ arriving to the system after time zero immediately
enters service if the server is available. Otherwise, for class $k$ job $j$
to be served, it must wait until all other jobs currently in the queue
exit via
service completion or abandonment. If class $k$ job $j$ has not entered
service before time $t_{k, j}+d_{k, j-Z_k(0)}$, class $k$ job $j$
abandons the
queue at this time. Otherwise, when class $k$ job $j$ enters service,
it is
served for $v_{k, j-Z_k(0)}$ time units.

It will be convenient to combine the exogenous arrival
process and deadlines into a single measure-valued deadline process.
\begin{definition}\label{def:mvdp} For $1\le k\le K$, the class $k$ deadline
process is given by
\[
\dead_k(t)=\sum_{i=1}^{E_k(t)}\delta_{d_{k, i}}, \quad t\in\ptime.
\]
Then the deadline process is given by
\[
\dead(t)=(\dead_1(t), \dots, \dead_K(t)), \quad t\in\ptime.
\]
\end{definition}
Note that $\dead_k(\cdot)\in\bD(\ptime, \M_1)$ for $1\le k\le K$ and
$\dead(\cdot)\in\bD(\ptime, \M_1^K)$.

\paragraph{The workload process}

The workload process $W(\cdot)\in\bD(\ptime, \Rp)$ tracks as a function
of time, the amount of time needed for
the server to process all jobs currently in the system that will not
abandon. If no additional jobs were to arrive
after time $t$, the system would be empty $W(t)$ time units in the
future. This quantity also records the amount
of time that a newly arriving job would have to wait before being
served, if the job were sufficiently patient.
In particular, as in~\cite{ref:JR}, $W(\cdot)$ almost surely satisfies,
for all $t\in\ptime$,
%
%
\begin{eqnarray}
W(t)&=&W(0)+\sum_{k=1}^K\int_{(0, t]} v_{k, E_k(s)} 1_{\{
d_{k, E_k(s)}>W(s-)\}}\der E_k(s)-B(t), \label{eq:vwp}\\
B(t)&=&\int_0^t 1_{\{ W(s)>0\}}\der s.\label{eq:btp}
\end{eqnarray}
Here $B(\cdot)$ denotes the busy time process. Since the server works
at rate one during any
busy period, the amount of work served by time $t$ is equal to $B(t)$.
Occasionally, it will be convenient to refer to the idle time process,
which is given by
$I(t)=t-B(t)$ for $t\in\ptime$.
For a given time $t$,
the first and second terms in \eqref{eq:vwp} add up all of the work
that enters
the system by time $t$ and doesn't abandon before entering service.
Note that because of the indicator in the integrand, a particular job's
service time is added to the workload if and only if that job will not
abandon before it enters service. Fluid and diffusion limit results for
this process were proved in~\cite{ref:JR},
where it was referred to as the virtual waiting time process. We will
leverage that fluid limit result to carry out the analysis here.

\paragraph{Evolution of the virtual sojourn times and patience times}

For $1\le k\le K$, let
%
%
\begin{eqnarray}
w_{k, j}&=&
\begin{cases}
\tilde w_{k, j}, &1\le j\le Z_k(0), \\
W(t_{k, j}), &j>Z_k(0),
\end{cases}
\nonumber\\
p_{k, j}&=&
\begin{cases}
\tilde p_{k, j}, &1\le j\le Z_k(0),
\\ d_{k, j-Z_k(0)} + v_{k, j-Z_k(0)}1_{\{d_{k, j-Z_k(0)}>W(t_{k, j}-)\}
}, &j>Z_k(0).
\end{cases}
\nonumber
\end{eqnarray}
Note that for each $1\le k\le K$ and $j>Z_k(0)$,
\[
W(t_{k, j})=W(t_{k, j}-) + v_{k, j-Z_k(0)} 1_{\{
d_{k, j-Z_k(0)}>W(t_{k, j}-)\}}.
\]
Hence, if class $k$ job $j$ is served, $w_{k, j}$ includes
the job's service time in addition to the time spent waiting for
service to begin.
So, if class $k$ job $j$ is served, it stays in the system $w_{k, j}$
time units.
Therefore, $w_{k, j}$ is referred to as the virtual sojourn time of
class $k$ job $j$.
Further, $p_{k, j}$ represents the initial patience of class $k$ job $j$.
If a class $k$ job $j$ arriving after time zero will enter service before
time $t_{k, j}+d_{k, j-Z_k(0)}$, the job's patience time is taken to be
$d_{k, j-Z_k(0)}+v_{k, j-Z_k(0)}$ to account for the fact that it will
not abandon
$d_{k, j-Z_k(0)}$ time units after arrival. Instead, it will stay until time
$t_{k, j}+w_{k, j}$ when it receives its full service time requirement.
Otherwise, if class $k$ job $j$ won't enter service before the deadline
expires, the job will abandon at time $t_{k, j}+d_{k, j-Z_k(0)}$, and
$p_{k, j}=d_{k, j-Z_k(0)}$.
Then for each $1\le k\le K$ and $j\in\N$, the sojourn time $s_{k, j}$ of
class $k$
job $j$ is given by
\[
s_{k, j}=\min(w_{k, j}, p_{k, j}).
\]
This quantity indicates precisely how long class $k$ job $j$ will
reside in the system.

For all $t\in\ptime$, $1\le k\le K$, and $1\le j\le A_k(t)$, define
%
%
\begin{eqnarray}
w_{k, j}(t) &=&(w_{k, j}-(t-t_{k, j}))^+, \label{eq:rst}\\
p_{k, j}(t)&=&(p_{k, j}-(t-t_{k, j}))^+.\label{eq:rpt}
\end{eqnarray}
For $1\le k\le K$, $j\in\N$ and $t\ge t_{k, j}$, $w_{k, j}(t)$ and
$p_{k, j}(t)$
respectively represent the residual virtual sojourn time and residual
patience time of
class $k$ job $j$ at time $t$. Then the residual sojourn time $s_{k, j}(t)$
for class $k$ job $j$ at time $t\ge t_{k, j}$ is given by
\[
s_{k, j}(t)=\min(w_{k, j}(t), p_{k, j}(t)).
\]

\paragraph{Measure-valued state descriptor}
For $1\le k\le K$, define the class $k$ state descriptor
by
%
%
\begin{equation}\label{eq:zk}
\z_k(t)=\sum_{j=1}^{A_k(t)}\delta^+_{(w_{k, j}(t), p_{k, j}(t))},
\qquad
t\in\ptime.
\end{equation}
The state descriptor is defined as
%
%
\begin{equation}\label{eq:z}
\z(t)=(\z_1(t), \dots, \z_K(t)), \qquad t\in\ptime.
\end{equation}
For each $1\le k\le K$, $\z_k(\cdot)\in\bD(\ptime, \M_2)$, and
$\z(\cdot)\in\bD(\ptime, \M_2^K)$.

Figure~\ref{fig:state} depicts one component of a hypothetical
system state at a fixed time. The points in the figure correspond
to unit atoms of the measure.
All points move to the left and
down at rate one. The dotted diagonal line $p=w$ separates the points
in the figure into two groups. The jobs associated with points
on or below the line will eventually abandon; the points above
the line represent jobs that will be served.
Once a point reaches one of the coordinate axes it immediately
leaves the system and so is no longer included in the system state.
Among all of the components of the system state, there is a unique point
that has the smallest residual virtual sojourn time coordinate
and lies above the diagonal. This point corresponds to the job
currently in service.

Notice that in Figure~\ref{fig:state} there are three sets of points
that are
aligned vertically.
In each set, at most one of these points is above the
dotted line and the corresponding job will be served.
For each residual virtual sojourn time assumed by some job that is in
the system
at this fixed time, there is exactly one of these jobs for which the location
of the corresponding point in the component of the state descriptor associated
with that job's class lies above the diagonal. This job will be served
and it
arrived before any other job with the same residual virtual sojourn time.
These later arriving jobs aren't patient enough to remain in queue
until service begins.
Instead each will abandon. Therefore the corresponding
points in the components of the state descriptor associated
with those jobs' classes lie on or below the diagonal.
Although their sequence of arrivals relative to one another is not captured
by the state descriptor, the relative residual patience times
reveal the order in which they will abandon.

%
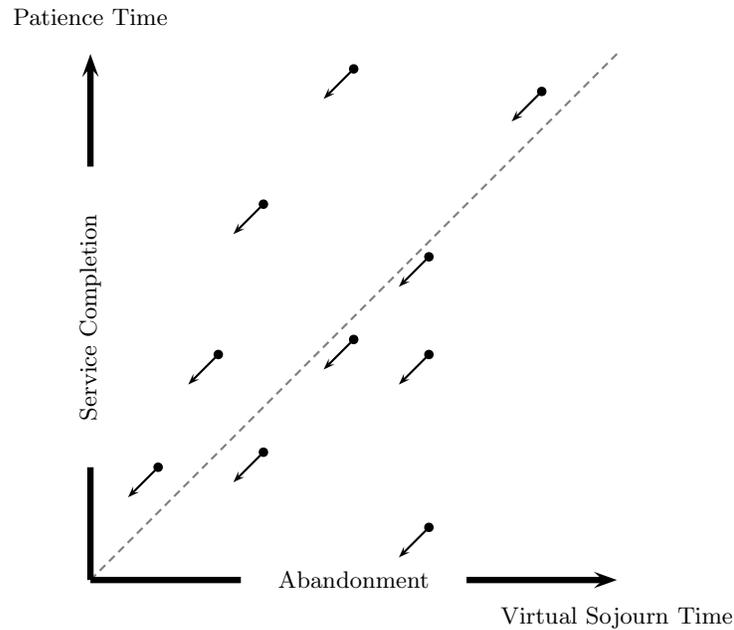
\begin{figure}[t]
\begin{center}
\begin{pspicture}(-1, -1)(8, 8)
\psline[linestyle=dashed, dash=3pt 2pt, linecolor=gray](0, 0)(7, 7)
{\psset{linewidth=3\pslinewidth}

\psline{-}(0.0, 0.0)(2.0, 0.0)
\psline{->}(5.0, 0.0)(7.0, 0.0)
\psline{-}(0.0, 0.0)(0.0, 1.5)
\psline{->}(0.0, 5.5)(0.0, 7.0)}

\rput(3.5, 0.0){Abandonment}
\rput{90}(0.0, 3.5){Service Completion}

\psline{*->}(6.0, 6.5)(5.6, 6.1)

\psline{*->}(3.5, 6.8)(3.1, 6.4)
\psline{*->}(3.5, 3.2)(3.1, 2.8)

\psline{*->}(4.5, 4.3)(4.1, 3.9)
\psline{*->}(4.5, 3.0)(4.1, 2.6)
\psline{*->}(4.5, 0.7)(4.1, 0.3)

\psline{*->}(2.3, 5.0)(1.9, 4.6)
\psline{*->}(2.3, 1.7)(1.9, 1.3)

\psline{*->}(1.7, 3.0)(1.3, 2.6)

\psline{*->}(0.9, 1.5)(0.5, 1.1)

\rput(0, 7.5){Patience Time}
\rput(7, -0.5){Virtual Sojourn Time}
\end{pspicture}
\caption{One coordinate of a hypothetical system state at a fixed time
with depicted transitions.} \label{fig:state}
\end{center}
\end{figure}

The state descriptor satisfies the following system of dynamic equations.
For each $1\le k\le K$ and for all $B\in\CB_{2, 0}$ and $t\in\ptime$,
%
%
\begin{equation} \label{eq:dynamics1}
\z_k(t)(B)
=
\sum_{j=1}^{A_k(t)}
1_{B_{t-t_{k, j}}}(w_{k, j}, p_{k, j}).
\end{equation}
To see this, simply note that for each $B\in\CB_{2, 0}$, $x\in\Rp$, and
$w, p>0$,
\[
((w-x)^+, (p-x)^+)\in B
\quad\Leftrightarrow\quad
(w-x, p-x)\in B
\quad\Leftrightarrow\quad
(w, p)\in B_x.
\]
The dynamic equation \eqref{eq:dynamics1} is equivalent to
%
%
\begin{equation}\label{eq:dynamics1prime}
\z_k(t)(B)
=
\z_k(0)(B_t) +
\sum_{j=1+Z_k(0)}^{A_k(t)}
1_{B_{t-t_{k, j}}}(w_{k, j}, p_{k, j}).
\end{equation}
Given $B\in\CB_{2, 0}$ and $x, y\in\Rp$, notice that
\[
(B_x)_y=B_{x+y}.
\]
This together with \eqref{eq:dynamics1} implies that,
for each $1\le k\le K$ and for all $B\in\CB_{2, 0}$ and $h, t\in
\ptime$,
%
%
\begin{equation} \label{eq:dynamics2}
\z_k(t+h)(B)
=
\z_k(t)(B_h)
+
\sum_{j=A_k(t)+1}^{A_k(t+h)}
1_{B_{t+h-t_{k, j}}}(w_{k, j}, p_{k, j}).
\end{equation}

\subsection{The Fluid Model}\label{sec:fms}

In this section, we define the fluid model associated with this
GI/GI/1+GI queue with $K$ distinct job classes. The primitive data for
this fluid model consists of the
vector $\lambda$ of arrival rates, the vector $\mu$ of service rates,
and the vector $\Gamma$ of deadline distributions. The triple
$(\lambda
, \mu, \Gamma)$ is referred to as supercritical data since $\rho>1$.
We begin by summarizing the results in~\cite{ref:JR} that suggest a
workload fluid model. Then, we develop
a full measure-valued fluid model. For this, we first need to define
the scaling regimes that yield the desired limiting dynamics.

\paragraph{The Sequence of Time Accelerated Systems}
We consider a sequence of systems indexed by $n \in\N$
in which the arrival rates and mean service times in the $n$th
system are sped up by a factor of $n$. We use a superscript
$n$ to denote all processes and parameters associated with
the $n$th system.

Specifically, the interarrival times $\{\xi^n_{k, i}\}_{i \in\N}$
for class $k$ in the $n$th system are given by
$\xi^n_{k, i} = \xi_{k, i} / n$ for $i\in\N$. Then $E_k^n(\cdot)$
denotes the class $k$ exogenous arrival processes in
the $n$th system. Hence, for $1\le k\le K$, $E_k^n(t)=E_k(nt)$
for $t\in\ptime$ and $\lambda_k^n=n\lambda_k$.
Similarly, the class $k$ service times $\{v^n_{k, i}\}_{i \in\N}$ in the
$n$th system are given by $v^n_{k, i} = v_{k, i} / n$ for $i\in\N$. Then
$\mu_k^n=n\mu_k$ and $\rho_k^n=\rho_k$. Hence, $\rho^n=\rho$.

The deadline sequence is unscaled. In particular, for each $1\le k\le
K$ and
$n, i\in\N$, $d_{k, i}^n=d_{k, i}$. Hence, we omit the superscript
when referring
to the class $k$ deadlines, distribution $\Gamma_k$, mean $1/\gamma
_k$, and
cumulative distribution function $F_k$ or its complement $G_k$.
Then the class $k$ deadline process for the $n$th system is given by
\[
\dead_k^n(t)=\sum_{i=1}^{E_k^n(t)} \delta_{d_{k, i}}, \qquad t\in
\ptime,
\]
and $\dead^n(\cdot)=(\dead_1^n(\cdot), \dots, \dead_K^n(\cdot))$.

For each $n\in\N$, there is an initial condition $Z^n(0)$ and
$\{(\tilde w_j^n, \tilde p_j^n, k_j^n)\}_{j\in\N}$ satisfying the conditions
indicated above. The initial conditions may vary with $n$.
Then for $n\in\N$, $1\le k\le K$, and $t\in\ptime$,
$A_k^n(t) = Z_k^n(0) + E_k^n(t)$.
If we suppose for simplicity that $Z_+(0)=0$ in the unscaled system,
then the job arrival times $\{t^n_{k, j}\}_{j \in\N}$
for class $k$ jobs in the $n$th system would be given by
\[
t^n_{k, j} =
\begin{cases}
0, &1\le j\le Z_k^n(0), \\
\frac{t_{k, j-Z_k^n(0)}}{n}, &j>Z_k^n(0).
\end{cases}
\]

The workload $W^n(\cdot)$, busy time $B^n(\cdot)$, and idle time
$I^n(\cdot)$ processes for the $n$th system satisfy equations analogous
to \eqref{eq:vwp} and \eqref{eq:btp}.
The residual virtual sojourn $w_{k, j}^n(\cdot)$ and residual patience
$p_{k, j}^n(\cdot)$ times, $1\le k\le K$ and $j\in\N$, for the $n$th
system are defined as in \eqref{eq:rst} and \eqref{eq:rpt}. The class
$k$ state descriptor $\z_k^n(\cdot)$ and the state descriptor $\z
^n(\cdot)$ for the $n$th system are defined as in \eqref{eq:zk} and
\eqref{eq:z}. Analogs of \eqref{eq:dynamics1} and \eqref{eq:dynamics2}
hold for the $n$th system as well.

\paragraph{The Workload Fluid Model}
For the time accelerated scaling regime, the authors of~\cite{ref:JR}
identify what one might
refer to as a workload fluid model. Namely, define a \textit{workload
fluid model
solution} to be a function $w:\ptime\to\Rp$ satisfying
%
%
\begin{equation}\label{eq:flwp}
w(t)=w(0)+\sum_{k=1}^K\rho_k \int_0^tG_k(w(s))\der s -t, \qquad t\in
\ptime.
\end{equation}
Equation \eqref{eq:flwp} can be interpreted as a fluid analog of
\eqref
{eq:vwp} and \eqref{eq:btp}.
Notice that any solution to \eqref{eq:flwp} is necessarily Lipschitz
continuous with Lipschitz constant
$\rho-1$, and therefore is almost everywhere differentiable. In \cite
{ref:JR}, it is asserted that for each $w_0\in\Rp$
a unique workload fluid model solution $w(\cdot)$ with $w(0)=w_0$
exists. They further show that workload fluid model
solutions satisfy a nice monotonicity property \cite[Theorem
1]{ref:JR}. To describe this, let
\begin{eqnarray*}
w_l&=&\sup\left\{ u\in\Rp: \sum_{k=1}^K\rho_k G_k(u) < 1\right\}
, \\
w_u&=&\sup\left\{ u\in\Rp: \sum_{k=1}^K\rho_k G_k(u) \le1\right
\}.
\end{eqnarray*}
Note that by continuity of $G_k(\cdot)$ for each $k$ and the fact that
$\rho>1$, $0<w_l\le w_u<\infty$.
They show that for a workload fluid model solution $w(\cdot)$ such that
$w(0)\not\in[w_l, w_u]$,
$w(\cdot)$ is strictly monotone and
%
%
\begin{equation}\label{eq:fdwl}
\lim_{t\to\infty}w(t)
=
\begin{cases}
w_l, &\hbox{if } w(0)<w_l, \\
w_u, &\hbox{if } w(0)>w_u.
\end{cases}
\end{equation}
Otherwise, if $w(0)\in[w_l, w_u]$, then $w(t)=w(0)$ for all $t\in
\ptime$.

Workload fluid model solutions also satisfy a useful relative ordering property.
In particular, given two workload fluid model solutions $w_1(\cdot)$
and $w_2(\cdot)$
such that $w_1(0)\le w_2(0)$ it follows that for all $t\in\ptime$,
%
%
\begin{equation}\label{eq:RelOrd}
w_1(t)\le w_2(t).
\end{equation}
To see this note that if $w_1(0)\le w_u$ and $w_l\le w_2(0)$, then
\eqref{eq:RelOrd}
follows immediately from the monotoncity property of workload fluid
model solutions.
Otherwise, $w_2(0)<w_l$ or $w_1(0)>w_u$. If $w_1(0)>w_u$, then by
continuity and
\eqref{eq:fdwl} there exists $t\in\ptime$ such that $w_2(t)=w_1(0)$.
For $s\in\ptime$,
set $w(s)=w_2(t+s)$. Then, it is straightforward to verify that
$w(\cdot
)$ is a workload
fluid model solution with $w(0)=w_1(0)$. Hence,
by uniqueness of fluid model solutions, $w(\cdot)=w_1(\cdot)$ so that
$w_2(\cdot+t)=w_1(\cdot)$. Then for all
$s\in\ptime$, $w_1(s)=w_2(s+t)\le w_2(s)$. An analogous argument demonstrates
\eqref{eq:RelOrd} when $w_2(0)<w_l$.

In~\cite{ref:JR}, the authors justify interpreting the fluid model as a
first order approximation
of the stochastic system by showing that if $W^n(0)\to w_0$ almost
surely as $n\to\infty$,
where $w_0$ is a finite nonnegative constant, then
$(W^n, I^n)\Rightarrow(w(\cdot), 0)$,
as $n\to\infty$, where $w(\cdot)$ is the unique workload fluid model
solution such that $w(0)=w_0$
\cite[Theorem~1]{ref:JR}. Here, we wish to cite a slightly modified
version of \cite[Theorem~1]{ref:JR},
which we state next. Before this, we make note of a representation for
the workload process developed
in~\cite{ref:JR} that will be of use here. By \cite[(19) and arguments
presented in the proof of Theorem~1]{ref:JR},
for all $n\in\N$ and $0\le s\le t<\infty$,
%
%
\begin{eqnarray}\label{eq:WorkRep}
W^n(t)&=&W^n(s) + I^n(t)-I^n(s) - (t-s) \\
&&+X^n(t)-X^n(s) +\sum_{k=1}^K \rho_k \int_s^t G_k(W^n(u))\der
u, \nonumber
\end{eqnarray}
where $\{X^n(\cdot)\}_{n\in\N}\subset\bD(\ptime, \R)$ is such
that as
$n\to\infty$,
%
%
\begin{equation}\label{eq:Xn}
X^n(\cdot)\Rightarrow0.
\end{equation}

\begin{thrm}\label{thrm:JR}
If $W^n(0)\Rightarrow W_0$ as $n\to\infty$, then as $n\to\infty$,
%
%
\begin{equation}\label{eq:JR}
(W^n, I^n)\Rightarrow(W^*(\cdot), 0),
\end{equation}
where $W^*(0)$ is equal in distribution to $W_0$. Furthermore,
$W^*(\cdot)$ is almost surely a workload fluid model solution.
\end{thrm}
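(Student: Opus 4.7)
The plan is to combine tightness of $\{(W^n, I^n)\}$ with a uniqueness argument for the workload fluid model, using the decomposition \eqref{eq:WorkRep} to pass to the limit. The only substantive extension beyond \cite[Theorem~1]{ref:JR} is to allow a random, rather than deterministic, limit for the initial workload, so we essentially follow the standard scheme: tightness, identification of subsequential limits as fluid solutions, then uniqueness.

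For tightness, the representation \eqref{eq:WorkRep} together with $0 \le G_k \le 1$ and the Lipschitz bound $0 \le I^n(t)-I^n(s) \le t-s$ (immediate from \eqref{eq:btp}) yields
\[
|W^n(t)-W^n(s)| \le (\rho+1)(t-s) + |X^n(t)-X^n(s)|, \qquad 0 \le s \le t.
\]
Tightness of $\{W^n(0)\}$ in $\Rp$ follows from $W^n(0)\Rightarrow W_0$, while $X^n\Rightarrow 0$ annihilates its modulus-of-continuity contribution in probability. Combining these gives C-tightness of $\{(W^n, I^n)\}$ in $\bD([0,\infty),\Rp^2)$, so the joint family $\{(W^n, I^n, X^n, W^n(0))\}_n$ is tight and every weak subsequential limit has continuous paths.

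Along any subsequence with $(W^n, I^n, X^n, W^n(0)) \Rightarrow (W^*, I^*, 0, W^*(0))$, apply Skorohod's representation theorem to pass to a.s., locally uniform convergence on a common probability space, and then take limits in \eqref{eq:WorkRep} using continuity of the $G_k$ (from continuity of the $\Gamma_k$) together with bounded convergence for the drift integral. This yields, almost surely for every $t\ge 0$,
\[
W^*(t) = W^*(0) - t + I^*(t) + \sum_{k=1}^K \rho_k \int_0^t G_k(W^*(u))\,\der u,
\]
with $W^*\ge 0$, $I^*$ continuous nondecreasing, and $I^*(0)=0$. The crux is to show $I^* \equiv 0$ almost surely: since $\sum_k \rho_k G_k(0) = \rho > 1$, the unreflected drift is strictly positive at the origin, so the minimal Skorohod regulator needed to keep $W^*$ nonnegative is identically zero; equivalently, idleness cannot accumulate in an overloaded fluid system. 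Once $I^* \equiv 0$, $W^*$ satisfies \eqref{eq:flwp} a.s., hence is a workload fluid model solution with $W^*(0)$ equal in distribution to $W_0$. The uniqueness of fluid model solutions given the initial value (cited from \cite{ref:JR}) then determines the law of $W^*$ entirely in terms of that of $W_0$, so all subsequential limits agree in distribution and $(W^n, I^n)\Rightarrow(W^*, 0)$ along the full sequence.

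The main obstacle is the vanishing of $I^*$ when $W_0$ may place positive mass at zero. In the deterministic setting of \cite[Theorem~1]{ref:JR} this is handled by a trajectory-level analysis; here the cleanest route is to condition on $W^n(0)=w$, apply the \cite{ref:JR} argument pointwise to obtain $B^n(\cdot)\to\mathrm{id}$ for each fixed $w$, and then integrate out $W^n(0)$ using tightness and dominated convergence. Alternatively, one can work directly with the reflected equation above and invoke the standard characterization of the Skorohod regulator for a drift that is strictly positive at the boundary. Either way, this is where the overloadedness $\rho>1$ must be used essentially.
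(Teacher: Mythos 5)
You take a genuinely different route from the paper. The paper's proof is essentially a single application of the continuous mapping theorem: citing \cite{ref:JR} and \cite{ref:RW}, it writes $(W^n,I^n)=(\varphi_G,\psi_G)(W^n(0)-\iota+X^n)$ for a map $(\varphi_G,\psi_G)$ that is Lipschitz continuous in the locally uniform topology, pushes the convergence $W^n(0)-\iota+X^n\Rightarrow W_0-\iota$ through this map, and uses the identification $(\varphi_G,\psi_G)(w_0-\iota)=(w(\cdot),0)$, also from \cite{ref:JR}. Your scheme --- C-tightness of $(W^n,I^n)$ via \eqref{eq:WorkRep}, Skorohod representation, passing to the limit in \eqref{eq:WorkRep}, then uniqueness of workload fluid solutions --- is more self-contained and does not need the Lipschitz regulated fluid map of \cite{ref:RW}; but it has to rebuild by hand the one fact that the map $(\varphi_G,\psi_G)$ packages for free, namely $\psi_G(w_0-\iota)\equiv 0$.

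That is where there is a genuine gap. From the limit relation $W^*(t)=W^*(0)-t+I^*(t)+\sum_k\rho_k\int_0^t G_k(W^*(u))\,\der u$ together with $W^*\ge 0$, $I^*$ continuous nondecreasing, $I^*(0)=0$, you cannot conclude that $I^*$ is the minimal Skorohod regulator: the pair obtained by setting $\tilde I(t)=t$ and solving the corresponding ODE for $\tilde W$ also satisfies all of those conditions with $\tilde I\not\equiv 0$. What is missing is the complementarity condition $\int_0^T W^*(s)\,\der I^*(s)=0$, which is not implied by the equation alone. It does hold, and it passes to the limit from the prelimit identity $\int_0^T W^n(s)\,\der I^n(s)=0$ (immediate from \eqref{eq:btp}) under the locally uniform a.s.\ convergence that your Skorohod representation provides, but this step must be stated; only after it is in place does the strictly positive drift $\sum_k\rho_k G_k(0)=\rho>1$ at the boundary force $I^*\equiv 0$. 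Your first proposed repair --- conditioning on $W^n(0)=w$ and applying the deterministic argument of \cite{ref:JR} pointwise --- does not close this, since joint weak convergence does not by itself yield convergence of the conditional laws given $W^n(0)$. The second proposed repair (invoking the Skorohod-regulator characterization directly) is the right idea but again presupposes complementarity. Once complementarity is supplied, your argument closes and, combined with the uniqueness of fluid solutions from \cite{ref:JR}, gives the stated convergence along the full sequence.
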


\begin{proof}[Summary of proof]
Let $\iota:\ptime\to\ptime$ be given by $\iota(t)=t$ for all $t\in
\ptime$.
Since the limit in \eqref{eq:Xn} is deterministic, it follows that as
$n\to\infty$,
\[
W^n(0)-\iota(\cdot) + X^n(\cdot)\Rightarrow W_0-\iota(\cdot).
\]
Using tools developed in~\cite{ref:RW}, the authors further show that
for all $n\in\N$,
%
%
\begin{equation}\label{eq:CMT}
(W^n(\cdot), I^n(\cdot))=(\varphi_G, \psi_G)\left(W^n(0)-\iota
(\cdot
)+X^n(\cdot)\right),
\end{equation}
where $(\varphi_G, \psi_G):\bD(\ptime, \R)\to\bD(\ptime, \Rp^2)$
is a Lipschitz
continuous function in the topology of uniform convergence on compact sets.
Then, by the continuous mapping theorem, it follows that as $n\to
\infty$,
%
%
\begin{equation}\label{eq:CMT2}
(W^n(\cdot), I^n(\cdot))\Rightarrow(\varphi_G, \psi_G)\left
(W_0-\iota
(\cdot)\right).
\end{equation}
The authors of~\cite{ref:JR} go on to show that for
$w_0\in\Rp$ $(\varphi_G, \psi_G)(w_0-\iota(\cdot))=(w(\cdot),
0)$, where
$w(\cdot)$ is the unique workload fluid model solution such that $w(0)=w_0$.
\end{proof}

\paragraph{The Measure-Valued Fluid Model}
To develop a measure-valued fluid model, we add spatial scaling to the
sequence of
time accelerated systems. In particular in order to obtain a fluid
scaled system, we divide
space in the $n$th time accelerated system by a factor of $n$. Then,
for $n\in\N$,
$1 \le k \le K$, and $t\in\ptime$,
\[
\Ebn_k(t)=\frac{E^n_k(t)}{n}, \qquad\CDbn_k(t)=\frac{\CD^n_k(t)}{n},
\qquad\hbox{and}\qquad\bzn_k(t)=\frac{\z^n_k(t)}{n}.
\]
The $n$th fluid scaled state descriptor in the sequence is given by
\[
\bzn(\cdot)
=
(\bzn_1(\cdot), \ldots, \bzn_K(\cdot)).
\]
We are interested the the limiting behavior as $n$ approaches infinity.

One can speculate as to how the dynamics of the system will behave
in the limit. Class $k$ fluid should arrive to the system at rate
$\lambda_k$
and be distributed over the positive quadrant as it enters. Class $k$
fluid arriving at
time $t$ should have virtual sojourn time $w(t)$, where $w(\cdot)$ is
an appropriate fluid
workload solution, and patience time distributed according to $\Gamma
_k$. Then the residual virtual sojourn and residual patience times
should each decrease at rate one
until at least one is zero, at which time the fluid would exit from the
system. In particular,
at time $t+h$, fluid that arrived to $(w, p)$ at time
$t$ should be at $(w-h, p-h)$ if $(w-h)\wedge(p-h)>0$ and should have
exited the system otherwise.
Such departures would be associated with abandonment if $p-h\le0\wedge
(w-h)$ and service
completion if $w-h\le0$ and $w-h < p-h$.
We wish to capture these dynamics by defining an appropriate fluid model.

The initial measure will need to satisfy various natural conditions.
To define these, first recall the definition of $C$ given in \eqref{def:C}.
Then, given $\vartheta\in\M_2^K$, recall that $\vartheta_+\in\M_2$
denotes the superposition
measure (see \eqref{def:sp}). Let
\[
w_{\vartheta}=\sup\{ x\in\Rp: \vartheta_+([x, \infty)\times\Rp
)>0\}.
\]
Define $\I\subset\M_2^K$ to be the collection of $\vartheta\in\M_2^K$
such that
\begin{enumerate}
\item[(I.1)] $\vartheta_+(C_x)=0\hbox{ for all }x\in\Rp^2$,
\item[(I.2)] $w_{\vartheta}<\infty$, 
%
\item[(I.3)] $\max_{1\le k\le K} G_k(w_{\vartheta
}-\varepsilon)>0\hbox{ for all }\varepsilon>0$.
\end{enumerate}

We refer to the collection ${\mathcal C}=\{ C_x : x\in\Rp^2\}$ as the
corner sets. Then
condition (I.1) is that each coordinate of the initial measure doesn't
charge corner sets,
which is equivalent to the condition that each coordinate of the
initial measure doesn't charge vertical or horizontal lines. A
motivation for requiring (I.1) is to
prevent exiting mass from resulting in discontinuities. Since (I.1) it
is preserved by the
fluid model dynamics (see Property~2 of Theorem~\ref{thrm:eu}), it is a
natural to require it of the initial condition.

Given $\vartheta\in\M_2^K$ that satisfies (I.1)
and $\varepsilon>0$ there exists $\kappa>0$ such that
%
%
\begin{equation}\label{eq:InitialCorner}
\max_{1\le k\le K} \sup_{x\in\Rp^2} \vartheta_k(C_x^{\kappa
})<\varepsilon.
\end{equation}
To see this, given $x\in\Rp^2$, let $p_i(x)=x_i$ for $i=1, 2$.
Then, given $\nu\in\M_2$ and $i=1, 2$, let $\pi_i(\nu)\in\M_1$ be
the projection measure such that for all $f\in\C_b(\Rp^1)$
%
%
\begin{equation}\label{def:pi}
\left\langle f, \pi_i(\nu)\right\rangle =\left\langle f\circ p_i, \nu\right\rangle .
\end{equation}
Then, if $\vartheta\in\M_2^K$ satisfies (I.1), it follows that for
$i=1, 2$, $\pi_i(\vartheta_+)$
doesn't charge points. Hence, for every $\varepsilon>0$ there exists
$\kappa>0$ such that
\[
\max_{i=1, 2}\sup_{y\in\Rp} \pi_i(\vartheta_+)([(y-\kappa
)^+, y+\kappa
])<\frac{\varepsilon}{2},
\]
(cf.\ \cite[Lemma A.1]{ref:GPW}).
Since for each $1\le k\le K$ and $x\in\Rp^2$,
\[
\vartheta_k(C_x^{\kappa})
\le\vartheta_+(C_x^{\kappa})
\le\pi_1(\vartheta_+)([(x_1-\kappa)^+, x_1+\kappa])+ \pi
_2(\vartheta
_+)([(x_2-\kappa)^+, x_2+\kappa]),
\]
\eqref{eq:InitialCorner} follows.

Condition (I.2) dictates that the fluid analog of the initial workload
is finite and
condition (I.3) requires that the initial fluid workload does not
exceed the maximal deadline.
In particular, let
\[
d_{\max}=\max_{1\le k\le K}\sup\{ x\in\Rp: G_k(x)>0\}.
\]
Then, by (I.2), $w_{\vartheta}<d_{\max}$ if $d_{\max}=\infty$.
Further, by (I.3), $w_{\vartheta}\le d_{\max}$ if $d_{\max}<\infty$,
which is natural. Indeed in the stochastic system
the workload can only exceed the maximal deadline by at most
one service time, i.e., once the workload has jumped above the
maximal deadline, all incoming jobs necessarily abandon until
such time as the maximal deadline exceeds the workload.
This discrepancy vanishes in the fluid limit.
The reader will see that it is needed mathematically when
$d_{\max}<\infty$ to prevent fluid from building up on a moving
vertical line during $(0, w_{\vartheta}-d_{\max}]$, which
would prevent (I.1) from being preserved.

A \textit{fluid model solution} for the initial measure $\vartheta\in\I$
is a function
$\zeta:\ptime\to\M_2^K$ such that $\zeta(0)=\vartheta$ and
for each $1\le k\le K$, $B\in\CB_2$, and $t\in\ptime$, $\zeta_k$ satisfies
%
%
\begin{equation}\label{fdevoeq2}
\zeta_k(t)(B)
=
\zeta_k(0)(B_t)
+
\lambda_k \int_0^t \left(\delta_{w(s)}^+\times\Gamma_k\right)(B_{t-s})
\der s,
\end{equation}
where $w(\cdot)$ denotes the unique workload fluid model solution with
$w(0)=w_\vartheta$.
Notice that \eqref{fdevoeq2} is a fluid analog of \eqref{eq:dynamics1}.

\section{Main Results and Approximation Formulas} \label{sec:main}

Here we state the two main theorems proved in this paper (Theorems \ref
{thrm:eu} and~\ref{thrm:flt}),
which together validate approximating various performance processes via
fluid model solutions.
Then, we derive some specific approximation formulas. Lastly, we
identify the set of invariant
states for the fluid model.

\begin{thrm}\label{thrm:eu} Let $\vartheta\in\I$. Then there exists a
unique fluid model solution
$\zeta(\cdot)$ for the data $(\lambda, \mu, \Gamma)$ such that
$\zeta
(0)=\vartheta$.
In addition,
\begin{enumerate}
\item$w_{\zeta(t)}=w(t)$ for all $t\in\ptime$,
where $w(\cdot)$ is the unique workload fluid model solution such that
$w(0)=w_{\vartheta}$;
\item$\zeta_+(t)(C_x)=0$ for all $x\in\Rp^2$ and $t\in\ptime$;
\item$\zeta(\cdot)$ is continuous.
\end{enumerate}
\end{thrm}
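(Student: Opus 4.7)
The formula \eqref{fdevoeq2} is an \emph{explicit} expression for $\zeta_k(t)$ in terms of $\vartheta_k=\zeta_k(0)$ and the unique workload fluid model solution $w(\cdot)$ supplied by Theorem~\ref{thrm:JR}. Existence and uniqueness therefore reduce to checking that the right-hand side defines a finite, nonnegative Borel measure on $\Rp^2$; this is immediate by Fubini, with total mass at most $\vartheta_k(\Rp^2)+\lambda_k t$ since $\delta_{w(s)}^+\times\Gamma_k$ has total mass at most $1$. The real content of the theorem is Properties 1--3.

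For Property 2, use the identity $(C_x)_u=C_{x+(u,u)}$. The initial-data term of $\zeta_+(t)(C_x)$ is $\vartheta_+(C_{x+(t,t)})$, which vanishes by (I.1). The arrival term applied to $C_{(x_1+t-s,\,x_2+t-s)}$ decomposes into a horizontal-ray part that vanishes since $\Gamma_k$ is atomless, and a vertical-ray part equal to $\int_0^t 1_{\{\phi(s)=x_1+t\}}G_k(x_2+t-s)\,ds$, where $\phi(s):=w(s)+s$ satisfies $\dot\phi=\sum_k\rho_k G_k(w(\cdot))$ a.e. A case analysis using conditions (I.2)--(I.3), the monotonicity/attraction properties of $w(\cdot)$ summarized in \eqref{eq:fdwl}, and the identity $G_k(d_{\max})=0$ yields $w(s)<d_{\max}$ for every $s>0$, so $\dot\phi>0$ a.e.\ on $(0,t]$, whence $\phi$ is strictly increasing and the indicator is zero for Lebesgue-a.e.\ $s$. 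Property 1 is then established by bounding the first-coordinate support of each summand: the arrival piece produces points of the form $(\phi(s)-t,\,\cdot)$ for $s\in[0,t]$, with supremum $\phi(t)-t=w(t)$, while the initial piece is bounded above by $w_\vartheta-t$; the inequality $w_\vartheta-t\le w(t)$ is checked in the three regimes $w_\vartheta<w_l$, $w_\vartheta\in[w_l,w_u]$, $w_\vartheta>w_u$ using \eqref{eq:fdwl} together with the Lipschitz bound $\dot w\ge-1$ derived from \eqref{eq:flwp}.

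Property 3 (continuity of $\zeta$) follows by dominated convergence applied to $t\mapsto\langle g,\zeta_k(t)\rangle$ for each $g\in\C_b(\Rp^2)$. In the initial-data term, $g(y-(t,t))1_{\{y_1\wedge y_2\ge t\}}$ has jumps in $t$ only when $y$ lies on $\{y_1=t\}\cup\{y_2=t\}$, a $\vartheta_k$-null set because the marginals $\pi_i(\vartheta_+)$ have no atoms by (I.1). The arrival integrand is continuous in $t$ off the Lebesgue-null level set $\{s:\phi(s)=c\}$, treated exactly as in Property~2, and the contribution of the moving upper endpoint of integration is $O(\|g\|_\infty|t-t_0|)$. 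The main obstacle is thus the strict monotonicity of $\phi$ on $(0,\infty)$ under the minimal input conditions (I.1)--(I.3); once that is in hand, everything else is bookkeeping with shift notation and two routine applications of dominated convergence.
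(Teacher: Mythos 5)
Your approach is correct in substance but genuinely different from the paper's, and it is worth spelling out the tradeoffs. For existence and uniqueness you invoke Fubini directly, where the paper first builds a pre-measure on rectangles, extends by Carath\'eodory, and then identifies the extension with the right-hand side of~\eqref{fdevoeq2} on all Borel sets via Dynkin's $\pi\lambda$-theorem; both routes work, and yours is a bit leaner provided you actually check joint measurability of $(s,p)\mapsto 1_B\bigl(w(s)-(t-s),\,p-(t-s)\bigr)$ so the iterated integral is legitimate. For Property~2 you argue directly from strict monotonicity of $\phi(\cdot)=w(\cdot)+\iota(\cdot)$, which is a clean shortcut; the paper instead proves the stronger Lemma~\ref{prop:NearBoundary} (a \emph{uniform} bound on $\kappa$-enlarged corner sets over $t\in[0,T]$ and $x\in\Rp^2$) and derives Property~2 as the $\kappa\downarrow0$ special case. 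For Property~3 you use dominated convergence on $t\mapsto\langle g,\zeta_k(t)\rangle$ for $g\in\C_b(\Rp^2)$, whereas the paper works with the Prohorov metric and reuses Lemma~\ref{prop:NearBoundary}. Your qualitative argument is perfectly adequate for the fluid model itself; the paper's quantitative version is designed to be mirrored almost verbatim by the stochastic Lemma~\ref{lem:BndReg} in the tightness proof, which is why the extra effort is expended. So your route is shorter for Theorem~\ref{thrm:eu} in isolation, but forgoes the reusable uniform estimate. (Also, your three-regime check of $w_\vartheta-t\le w(t)$ is unnecessary: it is exactly $\phi(0)\le\phi(t)$, which you have already established.)

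There is one genuine gap, in Property~1. You only establish $w_{\zeta(t)}\le w(t)$: you bound the first-coordinate support of the initial piece by $w_\vartheta-t$ and observe that the arrival piece lives on $\{\phi(s)-t : s\in[0,t]\}$ with supremum $w(t)$, but you never show that $\zeta_+(t)\bigl([w(t)-\varepsilon,\infty)\times\Rp\bigr)>0$ for every $\varepsilon\in(0,w(t))$, which is what $w_{\zeta(t)}\ge w(t)$ requires. Saying the arrival piece ``produces points'' up to $w(t)$ is not the same as saying the measure charges a neighborhood of $w(t)$: the first-coordinate marginal of the arrival piece at a point $\phi(s)-t$ carries weight $\sum_k\lambda_k G_k(t-s)$, which could in principle vanish. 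The paper closes this by noting that for $s$ sufficiently close to $t$ (more precisely, $s\in(s^*,t]$ with $s^*=s_1\vee(t-d_{\max})^+<t$) one has $t-s<d_{\max}$ and hence $\sum_kG_k(t-s)>0$, and that the corresponding $s$-interval has positive Lebesgue measure by continuity and strict increase of $\phi$. You should add this positivity step; it is short but not automatic from what you wrote.
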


The proof of this theorem is given in Section~\ref{sec:prfthrmeu}.
Property~1 is that the right edge of the support
of the fluid model solution superposition measure is equal to the
workload fluid model solution for all time,
which is true by definition at time zero. It implies that (I.2) holds
for all time. Then by the monotoncity
properties of workload fluid model solutions, (I.3) holds for all time
as well. The proof of Property~1 is fairly
straightforward, as the reader will see in Section~\ref{sec:prfthrmeu}.
Property~2 is that the fluid model solution superposition
measure doesn't charge corner sets for all time, i.e., that (I.1) holds
for all time, and its proof is more involved. For this,
we first prove Lemma~\ref{prop:NearBoundary}, from which Property~2
follows by letting $\varepsilon$
decrease to zero. Lemma~\ref{prop:NearBoundary} is then used together
with some additional arguments
to verify Property~3, continuity.

The next result justifies regarding fluid model solutions as
first order approximations for the measure-valued state descriptor of
the original stochastic system.

\begin{thrm}\label{thrm:flt}
Suppose that $\z_0^*=(\z_{0, 1}^*, \dots, \z_{0, K}^*)$ is a random measure
in $\M_2^K$ with superposition measure $\z_{0, +}^*$ such that
\begin{itemize}
\item[(A.1)] $\bP( \z_0^*\in\I)=1$,
%
\item[(A.2)] $\bE[ \langle p_1+p_2, \z_{0, +}^*\rangle ]<\infty$,
\item[(A.3)] $\bE[ \z_{0, +}^*(\Rp^2)]<\infty$.
\end{itemize}
Let $W_0^*=w_{\z_0^*}$. Also suppose that, as $n\to\infty$,
%
%
\begin{eqnarray}\label{eq:IC}
&&\left(\bzn(0), \left\langle p_1, \bzn(0)\right\rangle , \left\langle p_2, \bzn
(0)\right\rangle
, W^n(0)\right) \\
&&\qquad\Rightarrow\left(\z_0^*, \left\langle p_1, \z_0^*\right\rangle , \left
<p_2, \z
_0^*\right\rangle
, W_0^*\right).\nonumber
\end{eqnarray}
Then, as $n\to\infty$,
\[
\bzn(\cdot)\Rightarrow\z^*(\cdot),
\]
where $\z^*(0)$ is equal in distribution to $\z_0^*$.
Furthermore, $\z^*(\cdot)$ is almost surely a fluid model solution.
\end{thrm}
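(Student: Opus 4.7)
The plan is to execute the standard three step program for fluid limit theorems: first establish tightness of $\{\bzn(\cdot)\}_{n\in\N}$ in $\bD(\ptime,\M_2^K)$; next show that every subsequential weak limit point is almost surely a fluid model solution for the initial measure $\z_0^*$; finally, invoke the uniqueness portion of Theorem \ref{thrm:eu} together with the joint convergence of initial data in \eqref{eq:IC} to upgrade subsequential convergence to convergence of the full sequence to $\z^*(\cdot)$.

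For tightness, I would control the total mass process $\langle 1,\bzn_k(\cdot)\rangle$ on compact intervals by combining assumption (A.3) for the initial mass with the rate-$\lambda_k$ renewal arrivals. Assumption (A.2) on the initial first moments, together with finite means of $\Gamma_k$ and the finite initial workload implicit in $\z_0^*\in\I$, yields uniform control on the tails of $\bzn_k(t)$ in both coordinates, giving tightness of the marginals at each fixed $t$ via the standard criterion for $\M_2$. The required modulus of continuity in the weak topology is obtained from the dynamic equation \eqref{eq:dynamics2} applied to test functions $g\in\C_b(\Rp^2)$: on a bounded time interval, the increments are controlled by the arrival count and by the uniform continuity of $g$ under unit-speed shifts in both coordinates, with the relevant bounds furnished by the functional law of large numbers for the arrival processes $E_k^n$.

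For characterization, starting from the fluid-scaled version of \eqref{eq:dynamics1prime}, I would split the dynamics of $\bzn_k(t)(B)$ into three pieces: the residual of the initial mass, which converges to $\z_{0,k}^*(B_t)$ on the $B$ for which $B_t$ is a continuity set of $\z_{0,k}^*$; the aggregated post-time-zero arrivals, each placing an atom at the coordinates $(w_{k,j}^n, p_{k,j}^n)$ described in Section~\ref{sec:sm}; and a negligible remainder arising from indicator-sensitive rounding near the threshold $W^n(t_{k,j}^n-) \approx d_{k,j-Z_k^n(0)}$. Theorem \ref{thrm:JR} permits me to replace the random workload $W^n(\cdot)$ by the deterministic fluid workload $w(\cdot)$ driven by $W_0^*=w_{\z_0^*}$, inside the arrival term. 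The sum over arrivals involving the deadline samples is then handled by the residual deadline FLLN (Lemma~\ref{lem:ResDeadFLLN}), which yields exactly the integral $\lambda_k\int_0^t(\delta_{w(s)}^+\times\Gamma_k)(B_{t-s})\der s$ in \eqref{fdevoeq2}, because the indicator $1_{\{d_{k,i}>W^n(s-)\}}$ becomes $G_k(w(s))$ in the limit and contributes mass whose first coordinate is concentrated at $w(s)$.

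The hard part will be the characterization step, for several reasons. First, $w(\cdot)$ appears inside an indicator that determines whether each job's atom lands above or below the diagonal, so establishing the substitution of $W^n$ by $w$ inside this indicator requires continuity of $G_k$ (which is assumed) plus a careful argument exploiting \eqref{eq:WorkRep}--\eqref{eq:Xn}. Second, one can only pass to the limit against sets $B\in\CB_{2,0}$ that are continuity sets of the candidate limit; this forces an early proof that the limit $\z^*$ does not charge corner sets, mirroring Property 2 of Theorem \ref{thrm:eu} at the prelimit level via the uniform estimate \eqref{eq:InitialCorner} applied along the sequence. Third, there is a potential circular dependence between the measure-valued limit and the workload driving it, which must be broken by verifying $w_{\z^*(t)}=w(t)$, and this closes the loop so that the limiting equation \eqref{fdevoeq2} is well defined. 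Once these three obstacles are addressed, uniqueness in Theorem \ref{thrm:eu} identifies the subsequential limit, and the proof concludes.
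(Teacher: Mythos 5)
Your overall roadmap—tightness, identification of limit points as fluid model solutions via the residual deadline FLLN (Lemma~\ref{lem:ResDeadFLLN}) and Theorem~\ref{thrm:JR}, then uniqueness from Theorem~\ref{thrm:eu}—matches the paper's structure, and the characterization step is sketched in essentially the right spirit. However, the tightness argument as you describe it has a genuine gap. You assert that the modulus of continuity in the weak topology is controlled by ``the arrival count and the uniform continuity of $g$ under unit-speed shifts,'' but this omits the third mechanism by which $\bzn_k(\cdot)$ can move: mass exiting the system when an atom reaches a coordinate axis. A test function $g\in\C_b(\Rp^2)$ need not vanish on the axes, so if a macroscopic amount of mass sits in $\bar{C}^\kappa$ at time $t$, it can all disappear in time $h<\kappa$, producing a jump of order one in $\langle g,\bzn_k\rangle$ (or equivalently in the Prohorov distance) that is \emph{not} controlled by arrival counts or continuity of $g$. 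The paper's oscillation bound (Lemma~\ref{lem:osc}) explicitly subtracts $\bzn_k(t)(\bar{C}^\kappa)$ in the two-sided Prohorov comparison and then controls that term via the prelimit corner-set regularity Lemma~\ref{lem:BndReg}, which must therefore be established \emph{before} tightness can be concluded.

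Relatedly, you place the corner-set regularity issue in the characterization step and suggest it follows from ``the uniform estimate~\eqref{eq:InitialCorner} applied along the sequence.'' This understates the difficulty substantially: \eqref{eq:InitialCorner} only controls the initial mass, whereas the hard part of Lemma~\ref{lem:BndReg} is bounding the mass that arrives after time zero and subsequently accumulates near a moving corner. That bound requires the FLLNs for the auxiliary residual processes $\bar{\mathcal A}^n$ and $\bar{\mathcal V}^n$ (the two special cases of Lemma~\ref{lem:ResDeadFLLN}, used to control the horizontal band), together with the workload representation~\eqref{eq:WorkRep}--\eqref{eq:Xn} and the change-of-time function $\tau^n$ (used to control the vertical band, in a way that parallels the deterministic estimate~\eqref{eq:Delta} from Lemma~\ref{prop:NearBoundary}). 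So Lemmas~\ref{lem:deadFLLN}, \ref{lem:ResDeadFLLN}, and~\ref{lem:BndReg} are prerequisites for tightness itself, not afterthoughts for the characterization. A minor further inaccuracy: in your characterization sketch the indicator $1_{\{d_{k,i}>W^n(s-)\}}$ does not itself ``become $G_k(w(s))$'' in the relevant computation; the paper instead sandwiches $p^n_{k,j}$ between $d_{k,i}$ and $d_{k,i}+v^n_{k,i}$ and lets $\bar{\mathcal A}^m$ and $\bar{\mathcal V}^m$ squeeze the arrival integral from both sides, so the abandonment decision never enters directly.
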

This result is proved via verifying tightness and then proving that any
limit point is almost surely a fluid model solution, which are done in
Sections~\ref{sec:tightness} and~\ref{sec:char} respectively.

Next, we use the fluid model to derive various approximation formulas.
These include approximation formulas that demonstrate the nonlinear
nature of
state space collapse that takes place for this model. That is to say,
one can recover
various $K$ dimensional quantities such as the fluid approximation for
the queue-length
vector from the fluid approximation for the workload process. However,
this mapping
depends on the entirety of the deadline distributions and is therefore
nonlinear in its
dependence on the workload process. See \eqref{eq:fq}, \eqref{eq:fn},
and \eqref{eq:fa}
below. In addition, we are able to approximate various age related
processes such
as the number of jobs in the system that are within a specific age range.

To describe these, we will need to introduce the following functional.
Given $\vartheta\in\I$,
let $\zeta(\cdot)$ denote the unique fluid model solution such that
\mbox{$\zeta(0)=\vartheta$}.
Let $w(\cdot)$ denote the unique workload fluid model solution such
that $w(0)=w_{\vartheta}$.
Fluid that arrives at time $s>0$ has fluid workload $w(s)$. Some of
this fluid remains in the system
at time $t\ge s$ only if
\[
w(s)-(t-s)>0.
\]
Recall that $w(\cdot)$ is continuous. Furthermore, it is strictly
decreasing if $w(0)>w_u$
and bounded above by $w_u$ otherwise. Additionally, since
$\rho>1$, $w_u<d_{\max}$. Finally, by (I.3), $w(0)\le d_{\max}$.
Hence, $w(s)<d_{\max}$ for all $s>0$. Therefore, $w'(s)=\sum_{k=1}^K
\rho_kG_k(w(s))-1>-1$
for all $s>0$. Then, as a function of $s\in\ptime$, $w(s)+s$ is
continuous and strictly increasing.
Furthermore, for $s\in[0, t]$, the values range from $w(0)$ to
$w(t)+t\ge t$ at time $t$. Hence,
$\inf\{ 0\le s\le t : w(s)+s\ge t\}$ is well defined, finite, and can
be interpreted as the time at which fluid
departing at time $t$ via service completion arrived to the system. For
$t\in\ptime$, let
%
%
\begin{equation}\label{eq:tau}
\tau(t)=
\inf\{ 0\le s\le t : w(s)+s\ge t\}.
\end{equation}
Then $\tau(t)=0$ for all $t\in[0, w(0)]$, so that $w(\tau(t))+\tau(t)>
t$ for all $t\in[0, w(0))$.
Further, for $t\ge w(0)$,
%
%
\begin{equation}\label{eq:tauprop}
w(\tau(t))+\tau(t)=t.
\end{equation}
For $t>w(0)$, all fluid in the system at time $t$ arrived during the
time interval $(\tau(t), t]$.

\paragraph{Nonlinear State Space Collapse}
Given $\vartheta\in\I$, let $\zeta(\cdot)$ denote the unique fluid
model solution with $\zeta(0)=\vartheta$
and $w(\cdot)$ denote the unique workload fluid model solution with
$w(0)=w_{\vartheta}$.

\textit{Queue-Length Vector Fluid Approximation.}
For $1\le k\le K$ and $t\in\ptime$,
%
%
\begin{equation}\label{eq:fq}
z_k(t)
\equiv
\zeta_k(t)(\Rp^2)
=
\begin{cases}
\zeta_k(0)((\Rp^2)_t)
+
\lambda_k \int_0^{t} G_k(a)\der a
, & t< w(0), \\
\lambda_k \int_0^{w(\tau(t))} G_k(a)\der a
, & t\ge w(0).
\end{cases}
\end{equation}
We have chosen $a$ as the variable of integration to suggest the
interpretation age, or time in system.
This provides a simple interpretation of this formula. At time
$t<w(0)$, none of the fluid that entered the
system after time zero has been fully processed. So the integral term
only needs to address departures
resulting from expiring deadlines. Then, fluid arriving in $(0, t]$
remains in the system at time $t$
if and only if the initial deadline exceeds the current age.
Hence the simple form of the integral.
At time $t\ge w(0)$, all fluid initially in the system has departed,
and $w(\tau(t))$ can be interpreted as the age of the fluid departing
the system via service completion at time $t$. So, for $t\ge\tau(t)$,
$w(\tau(t))$ can be
thought of as the age of the oldest fluid in the system at time $t$.

\textit{Nonabandoning Jobs Queue-Length Vector Fluid Approximation.}
Let $U=\{ (w, p)\in\Rp^2 : w<p\}$.
Then the mass present in $U$ at time $t$ is associated with fluid that
doesn't abandon.
For $1\le k\le K$ and $t\in\ptime$,
%
%
\begin{equation}\label{eq:fn}
n_k(t)
\equiv
\zeta_k(t)(U)
=
\begin{cases}
\zeta_k(0)(U_t)
+
\lambda_k \int_0^{t}G_k(w(v)) \der v
, & t<w(0), \\
\lambda_k \int_{\tau(t)}^t G_k(w(v)) \der v
, & t\ge w(0).
\end{cases}
\end{equation}
Here the variable of integration should be regarded as time, and the
formula has the following
interpretation. Fluid that has not departed the system by time $t$ will
not abandon prior to service
completion if the initial deadline exceeds the fluid workload at the
time of arrival.

For $t\ge w(0)$, we can rewrite this result in an alternative form that
highlights the independence
of the service time distributions to the deadline and interarrival time
distributions. For $1\le k\le K$ and
$t\ge w(0)$, let
\[
w_k(t)=\rho_k \int_{\tau(t)}^t G_k(w(v)) \der v.
\]
Then $w_k(t)$ denotes the amount of fluid workload in the system at
time $t$ due to class $k$ fluid.
For each $1\le k\le K$, we have for all $t\ge w(0)$,
\[
n_k(t)=\mu_kw_k(t).
\]
In this form, the formula is reminiscent of linear state space collapse,
but the dependence of $w_k(\cdot)$ on $w(\cdot)$ is nonlinear.

\textit{Abandoning Jobs Queue-Length Vector Fluid Approximation.}
Let $L=\{ (w, p)\in\Rp^2 : p\le w\}$.
Then the mass present in $L$ at time $t$ is associated with fluid that abandons.
For $1\le k\le K$ and $t\in\ptime$, let $a_k(t)=\zeta_k(t)(L)$.
Then, for
$1\le k\le K$ and $t\in\ptime$,
%
%
\begin{equation}\label{eq:fa}
a_k(t)
=
\begin{cases}
\zeta_k(0)(L_t)
+
\lambda_k \int_0^{t} \left(G_k(t-v)-G_k(w(v))\right) \der v
, & t<w(0), \\
\lambda_k \int_{\tau(t)}^t \left(G_k(t-v)-G_k(w(v))\right)\der v
, & t\ge w(0).
\end{cases}
\end{equation}
Note that it is easy to verify that $z_k(t)=a_k(t)+n_k(t)$ for $1\le
k\le K$ and $t\in\ptime$.

\paragraph{Age Related Fluid Approximations}
Given $\vartheta\in\I$, let $\zeta(\cdot)$ denote the unique fluid
model solution with $\zeta(0)=\vartheta$
and let $w(\cdot)$ denote the unique workload fluid model solution with
$w(0)=w_{\vartheta}$.
Fluid in the system at time $t>0$ that arrived at time $s\in(0, t]$ is
$t-s$ units old at time $t$.
Hence, for fluid in the system at time $t>0$ that arrives in $(0, t]$ to
be of age at least
$u$, it must have arrived by time $t-u$. Then $s\in(0, t-u]$. This fluid
has residual offered
waiting time $(w(s)-(t-s))^+$. Since it is in the system at
time $t$, $w(s)-(t-s)>0$,
i.e., $w(s)+s>t$ so that $s\in(\tau(t), t-u]$. For $t\ge w(0)$, this is
an empty time interval if
$w(\tau(t))\le u$. Otherwise, for $t\ge w(0)$ and $s\in(\tau(t), t-u]$,
$w(s)-(t-s)\in(0, w(t-u)-u]$.
For $t\ge w(0)$ and $0\le u\le t$, let
\[
H(t, u)=[0, \left(w(t-u)-u\right)^+]\times\Rp.
\]
This is the line $\{0\}\times\Rp$ if $u\ge w(\tau(t))$ and is a
vertical stripe otherwise.
Then, for $1\le k\le K$, $t\ge w(0)$, and $0\le u\le t$, we can interpret
\[
z_k(t, u)\equiv\zeta_k(t)(H(t, u)),
\]
to be the amount of class $k$ fluid in the system at time $t$ of age at
least $u$.

If $t<w(0)$, some initial fluid may remain in the system at time $t$.
We regard that fluid as being $t$
units old. At time $t$, the residual offered waiting time of such fluid
lies in $[0, w(0)-t]$. If we consider
a time $0\le u\le t<w(0)$ and ask for the total amount of fluid of age
at least $u$, this would also include
fluid that arrived after time zero and by time $t-u$. By \eqref
{eq:flwp}, $w(t-u)-u>w(0)-t$. Hence the
definition of $H(t, u)$ and interpretation of $z_k(t, u)$ naturally
extend to all $0\le u\le t<w(0)$.
Then, for $1\le k\le K$ we have the following: for $0\le u\le t<w(0)$,
\begin{align}\label{eq:age1}
z_k(t, u)&=
\zeta_k(0)(H(t, u)_t)
+
\lambda_k \int_u^t G_k(v)\der v, \\
\intertext{and for $t\ge w(0)$, }
z_k(t, u)&=
\begin{cases}
\lambda_k \int_u^{w(\tau(t))} G_k(v)\der v
, & 0\le u< w(\tau(t)), \\
0, & w(\tau(t))\le u\le t.\label{eq:age2}
\end{cases}
\end{align}
One can obtain similar approximations for the abandoning and
nonabandoning queue length of a certain age or older by
computing the measure of an appropriately chosen set.

\paragraph{Invariant States}
An \textit{invariant state} is a measure $\theta\in\I$ such that the
unique fluid model solution $\zeta(\cdot)$
with initial measure $\theta$ satisfies $\zeta(t)=\theta$ for all
$t\in
\ptime$. Here we identify the collection of invariant states.

For this, we begin by recalling some measure theoretic background.
Details can be found in \cite[Chapter~1]{ref:F}. Let
\[
{\mathcal R}=\left\{ [a, b)\times[c, d) \subset\Rp^2 : a, c\in\Rp,
a\le
b\le\infty\hbox{ and } c\le d\le\infty\right\}.
\]
Then ${\mathcal R}$ is an elementary family (a collection of sets that
contains the emptyset, is closed under
pairwise intersection, and such that complements of members of the collection
can be written as finite unions of members of the collection).
Further note that $\sigma({\mathcal R})=\CB_2$. Let
\[
{\mathcal R}'=\left\{\cup_{l=1}^{m} R_l : m\in\N\hbox{ and }R_l\in
{\mathcal R}\right\}.
\]
Then ${\mathcal R}'$ is an algebra (a collection of sets that contains
the emptyset and is closed under
pairwise union and relative complementation). In fact, ${\mathcal R}'$
is the algebra generated by ${\mathcal R}$. It is easy to see that any
finite pre-measure
(additive $\Rp$ valued function that assigns value zero to the
emptyset) on ${\mathcal R}$ extends to a finite pre-measure on
${\mathcal R}'$.
Then, by the Carath\'{e}odory extension theorem \cite[Theorem
1.14]{ref:F}, any finite pre-measure defined on ${\mathcal R}'$
uniquely extends to a finite Borel measure on $\Rp^2$. Thus, in order
to uniquely specify a finite Borel measure on $\Rp^2$, it suffices to
specify a finite pre-measure on~${\mathcal R}$.

Given $w_l\le w\le w_u$, let $\theta^w\in\I$ be the unique
finite Borel measure on $\Rp^2$ that for $1\le k\le K$ satisfies
\[
\theta_k^w([w, \infty)\times\Rp)=0,
\]
and for $0\le a<b\le w$ and $0\le c<d\le\infty\in\Rp$,
\[
\theta_k^w([a, b)\times[c, d))
=
\lambda_k \int_{w-b}^{w-a}\Gamma_k([c+u, d+u))\der u.
\]
It is easy to verify that these relationships determine a finite
pre-measure on ${\mathcal R}$.
Indeed, since $[w, \infty)\times\Rp$ has measure zero, it suffices to
specify each
$\theta_k^w$ on sets in ${\mathcal R}$ that don't meet $[w, \infty
)\times
\Rp$.
Define
\[
\J=\{ \theta^w : w_l\le w\le w_u\}.
\]

\begin{thrm}\label{thrm:is} The set of invariant states is given by
$\J$.
\end{thrm}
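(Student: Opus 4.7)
The plan is to prove both inclusions separately: every $\theta^w \in \J$ is an invariant state, and every invariant state coincides with some $\theta^w$. The forward direction relies on a direct verification of equation \eqref{fdevoeq2} with the constant trajectory $\zeta(\cdot) \equiv \theta^w$. The converse leverages Property~1 of Theorem~\ref{thrm:eu} to pin down $w_\theta$ and then uses \eqref{fdevoeq2} at large times to recover $\theta$ itself.

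For the forward direction, fix $w \in [w_l, w_u]$. I would first check that $\theta^w \in \I$. The total class-$k$ mass is $\lambda_k \int_0^w G_k(u)\,du < \infty$ and the support lies in $[0, w) \times \Rp$, yielding $w_{\theta^w} = w$ and hence (I.2); condition (I.3) follows from the fact that $\sum_k \rho_k G_k(w) \ge 1$ on the plateau, so $\max_k G_k(w-\varepsilon) > 0$ for every $\varepsilon > 0$. For (I.1), vertical slices of $\theta_k^w$ have mass $\lambda_k\varepsilon$, while horizontal slices have mass $\lambda_k \int_0^w \Gamma_k([x_2 + u, x_2 + u + \varepsilon))\,du$, which vanishes as $\varepsilon \downarrow 0$ by continuity of $\Gamma_k$ and dominated convergence. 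Since $w \in [w_l, w_u]$, the unique workload fluid model solution with $w(0) = w$ is the constant $w(\cdot) \equiv w$, so setting $\zeta(t) = \theta^w$ for all $t$, the task is to verify \eqref{fdevoeq2} on the generating family of rectangles $B = [a, b) \times [c, d)$. After the substitution $r = t - s$, the arrival integral becomes $\lambda_k \int_{(w-b, w-a] \cap [0, t]} \Gamma_k([c + r, d + r))\,dr$, which combines with the shifted term $\theta^w_k(B_t)$ under the three cases (whether $B_t$ lies entirely past the edge $w$, straddles it, or sits inside $[0, w)$) to reproduce $\theta^w_k(B)$. The Carath\'{e}odory extension recalled in the paper then propagates this identity from ${\mathcal R}$ to all Borel sets, establishing invariance.

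Conversely, let $\theta \in \I$ be invariant, so $\zeta(\cdot) \equiv \theta$ solves the fluid model. By Property~1 of Theorem~\ref{thrm:eu}, $w_\theta = w_{\zeta(t)} = w(t)$ for all $t$, where $w(\cdot)$ is the workload fluid solution with $w(0) = w_\theta$; the monotonicity property \eqref{eq:fdwl} then forces $w := w_\theta \in [w_l, w_u]$. Property~2 together with the definition of $w_\theta$ implies that each $\theta_k$ is supported on $[0, w) \times \Rp$, so for $t \ge w$ the shifted term $\theta_k(B_t)$ vanishes for every $B \in \CB_2$ (since $B_t \subseteq [t,\infty) \times \Rp \subseteq [w,\infty) \times \Rp$). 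Substituting into \eqref{fdevoeq2} and changing variables $r = t - s$ yields
\[
\theta_k(B) = \lambda_k \int_0^t \Gamma_k\bigl(\{p : (w - r, p - r) \in B\}\bigr)\,dr.
\]
Evaluating on $B = [a, b) \times [c, d) \subseteq [0, w) \times \Rp$, the integrand is supported on $r \in (w - b, w - a] \subseteq [0, w] \subseteq [0, t]$, and the right-hand side collapses to $\lambda_k \int_{w-b}^{w-a} \Gamma_k([c + r, d + r))\,dr = \theta_k^w(B)$. Equality on the generating family extends by the Carath\'{e}odory argument to all of $\CB_2$, so $\theta = \theta^w \in \J$. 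The main obstacle is the bookkeeping in the forward direction: verifying \eqref{fdevoeq2} requires a careful case analysis keyed to the position of $B_t$ relative to the moving edge at $w$, matching the transported mass $\theta^w_k(B_t)$ with the freshly injected mass from the arrival integral; verification of (I.1) for $\theta^w$ is a secondary technical point depending essentially on the continuity of $\Gamma_k$.
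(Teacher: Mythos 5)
Your proposal follows essentially the same strategy as the paper's proof: in one direction you invoke Property~1 of Theorem~\ref{thrm:eu} to force $w_\theta\in[w_l,w_u]$, note that the support of $\theta$ lies in $[0,w_\theta)\times\Rp$ so that the initial term in \eqref{fdevoeq2} vanishes for $t\ge w_\theta$, and then read off $\theta=\theta^{w_\theta}$ from the remaining integral; in the other direction you plug the constant trajectory $\theta^w$ into \eqref{fdevoeq2} and verify the identity by a three-case computation on a generating family before extending to $\CB_2$. The only departures are cosmetic: you compute on the half-open rectangles of ${\mathcal R}$ whereas the paper uses the $\pi$-system ${\mathcal P}=\{[a,\infty)\times[c,\infty)\}$ (which gives slightly lighter bookkeeping), and you supply the routine check that $\theta^w\in\I$, which the paper states without proof in the definition of $\J$.
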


The proof of Theorem~\ref{thrm:is} is given in Section~\ref{sec:prfthrmeu}.
Note that for $w_l\le w\le w_u$ and $1\le k\le K$,
the fluid approximations for queue length $z_k^w$ and nonabandoning
queue length
$n_k^w$ take the form
\begin{eqnarray*}
z_k^w&\equiv&\theta_k^w(\Rp^2)=\lambda_k\int_0^w G_k(u)\der u, \\
n_k^w&\equiv&\theta_k^w(U)=\lambda_k w G_k(w).\\
\end{eqnarray*}

\section{Properties of Fluid Model Solutions}\label{sec:prfthrmeu}

In this section, we prove Theorems~\ref{thrm:eu} and~\ref{thrm:is}.
The proof of existence and uniqueness for Theorem~\ref{thrm:eu}
is relatively straightforward. Indeed, since the right hand side of
\eqref{fdevoeq2} only depends on $(\lambda, \mu, \Gamma)$ and
$\vartheta$, \eqref{fdevoeq2} can be regarded as a definition,
provided that the integral term is a well defined function taking
values in $\M_2^K$. In this regard, note that $\vartheta_k\in\M_2$
and $\delta_{w(s)}^+\times\Gamma_k\in\M_2$ for all $s\in\ptime$
and $1\le k\le K$. Hence, the main issue is to show that the integral
is well defined on all of ${\mathcal B}_2$, which is demonstrated
here using the Carath\'{e}odory extension theorem and
Dynkin's $\pi\lambda$-theorem.

\begin{proof}[Proof of Existence and Uniqueness for Theorem~\ref{thrm:eu}]
Fix \mbox{$\vartheta\in\I$}.
First we verify existence of a fluid model solution with initial
measure $\vartheta$.
For this, fix $1\le k\le K$ and $0<t<\infty$.
Given $B\in{\mathcal R}$, we have that $B=[a, b)\times[c, d)$ for some
$a, c\in\Rp$, $a\le b\le\infty$, and $c\le d\le\infty$. Define
$f:[0, t]\to[0, 1]$ by
\[
f(s)=\left(\delta_{w(s)}^+\times\Gamma_k\right)(B_{t-s}).
\]
By \eqref{eq:fdwl}, the fact that $w_l>0$ and monotonicity properties
of $w(\cdot)$, $w(s)>0$ for all $s>0$.
Then, since $t>0$, we have that for $s\in(0, t]$,
\[
f(s)=\left(\delta_{w(s)}\times\Gamma_k\right)(B_{t-s}).
\]
Therefore, for $s\in[0, t]$,
\[
f(s)
=
\begin{cases}
G_k(c+t-s)-G_k(d+t-s), &\hbox{if }a+t-s\le w(s)< b+t-s, \\
0, &\hbox{otherwise.}
\end{cases}
\]
We see that $a+t-s\le w(s)< b+t-s$ if and only if $a+t\le w(s)+s< b+t$
if and only if
$\tau(a+t)\le s<\tau(b+t)$, where $\tau(\infty)=\infty$. Hence, for
$s\in[0, t]$,
\[
f(s)
=
\begin{cases}
G_k(c+t-s)-G_k(d+t-s), &\hbox{if } \tau(a+t) \le s <\tau(b+t), \\
0, &\hbox{otherwise.}
\end{cases}
\]
Then, since $G_k(\cdot)$ and $\tau(\cdot)$ are continuous, $f$ is Borel
measurable and\break
$\int_0^tf(s)ds$ is well defined. Further, since $\tau(\cdot)$ is
monotone increasing,
\[
\int_0^t f(s)\der s
=
\int_{t\wedge\tau(a+t)}^{t\wedge\tau(b+t)} \left(
G_k(c+t-s)-G_k(d+t-s)\right)\der s.
\]

For $B\in{\mathcal R}$, define
\[
\gamma_k(t)(B)=\vartheta_k(B_t)+\lambda_k\int_0^t \left(\delta
_{w(s)}^+\times\Gamma_k\right)(B_{t-s})\der s.
\]
It is clear that $\gamma_k(t)(\emptyset)=0$.
Further, $\gamma_k(t)(\Rp^2)\le\vartheta_k(\Rp^2)+\lambda_k
t<\infty$.
So, in order to verify that $\gamma_k(t)$ is a finite premeasure on
${\mathcal R}$, it suffices to verify countable additivity.
This amounts to demonstrating that the summation and integral can be
interchanged, which is an immediate consequence
of the monotone convergence theorem. Then, by the Carath\'{e}odory
extension theorem, $\gamma_k(t)$
extends to a finite Borel measure $\gamma_k^*(t)$ on ${\mathcal B}_2$.

We must verify that $\gamma_k^*(t)$ satisfies \eqref{fdevoeq2} for all
$B\in{\mathcal B}_2$. For this, we
use Dynkin's $\pi\lambda$-theorem \cite[Chapter~1
Theorem~3.3]{ref:B}. Let
%
%
\begin{equation}\label{def:Pi}
{\mathcal P}=\{ [a, \infty)\times[c, \infty) : 0\le a, c<\infty\}.
\end{equation}
This is a $\pi$-system since it is closed under intersection. Let
\[
{\mathcal L}=\left\{ B\in{\mathcal B}_2 :
\gamma_k^*(t)(B)=\vartheta_k(B_t)+\lambda_k\int_0^t \left(\delta
_{w(s)}\times\Gamma_k\right)(B_{t-s})\der s
\right\}.
\]
Then ${\mathcal L}$ is a $\lambda$-system since $\Rp^2\in{\mathcal L}$,
${\mathcal L}$
is closed under countable unions (by the monotone convergence theorem), and
$A\setminus B\in{\mathcal L}$ whenever $B, A\in{\mathcal L}$ and
$B\subset A$. Further,
\[
{\mathcal P}\subset{\mathcal R}\subset{\mathcal L}\subset{\mathcal B}_2.
\]
Then, since the $\sigma$-algebra generated by ${\mathcal P}$ is
${\mathcal B}_2$,
it follows from Dynkin's $\pi\lambda$-theorem that ${\mathcal
L}={\mathcal B}_2$.

Since $1\le k\le K$ and $t>0$ were arbitrary, it follows that $\gamma
^*:\ptime\to\M_2^K$,
which is given by
\[
\gamma^*(t)=(\gamma_1^*(t), \dots, \gamma_K^*(t)),
\]
is a fluid model solution. Uniqueness is immediate since any fluid
model solution $\zeta$
such that $\zeta(0)=\vartheta$ satisfies $\zeta_k(t)(B)=\gamma
_k^*(t)(B)$ for all
$B\in{\mathcal R}$ (see \cite[Theorem~1.14]{ref:F}).
\end{proof}

\begin{proof}[Proof of Property~1 for Theorem~\ref{thrm:eu}]
Fix $\vartheta\in\I$. Let $w(\cdot)$ be the unique workload fluid model
solution with $w(0)=w_{\vartheta}$
and $\zeta(\cdot)$ be the unique fluid model solution with $\zeta
(0)=\vartheta$.
Since $\zeta(0)=\vartheta$ and $w(0)=w_{\vartheta}$, $w_{\zeta(0)}=w(0)$.
Fix $t\in(0, \infty)$ and $\varepsilon\in(0, w(t))$.
Set $B=[w(t)-\varepsilon, \infty)\times\Rp$. Then $B_{(2\varepsilon
, 0)}=[w(t)+\varepsilon, \infty)\times\Rp$.
We wish to show that $\zeta_+(t)(B)>0$ and $\zeta
_+(t)(B_{(2\varepsilon, 0)})=0$.
Then, since $\varepsilon\in(0, w(t))$ is arbitrary, it follows that
$w_{\zeta(t)}=w(t)$.

Suppose that $\zeta_+(t)(B_{(2\varepsilon, 0)})>0$. Then by \eqref{fdevoeq2},
there exists $s\in[0, t]$ such that $w(t)+\varepsilon+t-s\le w(s)$, i.e.,
$w(t)+t+\varepsilon\le w(s)+s$. But $w(\cdot)+\iota(\cdot)$ is
strictly increasing,
so that $w(t)+t+\varepsilon> w(s)+s$, which is a contradiction. Thus,
$\zeta_+(t)(B_{(2\varepsilon, 0)})=0$.

Since $w(\cdot)+\iota(\cdot)$ is continuous and strictly increasing,
there exists
$s_1\in[0, t)$ such that $w(t)+t-\varepsilon\le w(s_1)+s_1$. Then, for
all $s\in(s_1, t]$,
$w(t)+t-\varepsilon\le w(s)+s$. Let $s_2=(t-d_{\max})^+$. Then
$s_2\in[0, t)$
and $t-s<d_{\max}$ for all $s\in(s_2, t]$. Let $s^*= s_1 \vee s_2$.
Then $s^*<t$ and
\[
\sum_{k=1}^K \int_{0}^t \left( \delta_{w(s)}^+ \times\Gamma_k
\right
)(B_{t-s})\der s
\ge
\sum_{k=1}^K \int_{s_1}^t G_k(t-s)\der s
\ge
\sum_{k=1}^K \int_{s^*}^t G_k(t-s)\der s
>
0.
\]
Hence, by \eqref{fdevoeq2}, $\zeta_+(t)(B)>0$.
\end{proof}

The next goal is to verify Properties~2 and~3 for Theorem~\ref{thrm:eu}.
For this, we state and prove the following lemma. Property~2 follows
immediately from Lemma~\ref{prop:NearBoundary} by letting $\varepsilon$
decrease to zero. To verify Property~3, we must show that fluid model
solutions change very little over
short time intervals. Note that there are three mechanisms that cause
the measure-valued function
to change: fluid arriving, fluid departing, and the measure evolving. Verifying
that the fluid departs in a smooth way is the main issue that needs to
be addressed. We will
use the result in the following lemma to assist with this as well.

\begin{lemma}\label{prop:NearBoundary}
Given $\vartheta\in\I$, let $\zeta(\cdot)$ be the unique fluid
model solution
such that $\zeta(0)=\vartheta$. For every $T, \varepsilon>0$, there exists
$\kappa>0$ such that
\[
\max_{1\le k\le K}\sup_{t\in[0, T]}\sup_{x, y\in\Rp} \zeta
_k(t)(C_{(x, y)}^{\kappa})<\varepsilon.
\]
\end{lemma}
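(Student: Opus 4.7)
The plan is to use the fluid evolution equation \eqref{fdevoeq2} to decompose $\zeta_k(t)(C_{(x,y)}^\kappa)$ into an initial-condition contribution and an arrival contribution, then bound each uniformly in $(k,t,x,y)$. Using the identity $(C_{(x,y)})_u = C_{(x+u, y+u)}$ together with the convention $B_u^\kappa = (B_u)^\kappa$, one has $(C_{(x,y)}^\kappa)_u \subseteq C_{(x+u, y+u)}^\kappa$ for $u \in \Rp$, so \eqref{fdevoeq2} gives
\[
\zeta_k(t)(C_{(x,y)}^\kappa) \le \vartheta_k(C_{(x+t, y+t)}^\kappa) + \lambda_k \int_0^t (\delta_{w(s)}^+ \times \Gamma_k)(C_{(x+t-s, y+t-s)}^\kappa)\, ds.
\]
Since $\vartheta \in \I$, estimate \eqref{eq:InitialCorner} applied to each coordinate $\vartheta_k \le \vartheta_+$ furnishes a $\kappa_1>0$ for which the first term is at most $\varepsilon/3$ for all $k$, all $x,y,t$, and all $\kappa \le \kappa_1$.

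For the integrand, I would observe that any point $(w(s), p) \in C_{(a,b)}^\kappa$ must lie within $\kappa$ of either the vertical ray $\{a\} \times [b, \infty)$ or the horizontal ray $[a, \infty) \times \{b\}$, which yields
\[
(\delta_{w(s)}^+ \times \Gamma_k)(C_{(a,b)}^\kappa) \le 1_{\{|w(s)-a|<\kappa\}} + \Gamma_k((b-\kappa, b+\kappa)).
\]
Taking $a = x+t-s$, $b = y+t-s$ and applying Fubini after the change of variables $u = y+t-s$,
\[
\int_0^t \Gamma_k((y+t-s-\kappa, y+t-s+\kappa))\, ds = \int_y^{y+t} \Gamma_k((u-\kappa, u+\kappa))\, du \le 2\kappa,
\]
uniformly in $y$ and $t \in [0, T]$ because $\Gamma_k$ is a probability measure; this term contributes at most $2\lambda_k \kappa$.

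The main obstacle is the remaining term $\int_0^t 1_{\{|g(s)|<\kappa\}}\, ds$, with $g(s) = w(s)+s-(x+t)$, since this depends on $x$ and $t$ through $g$. I would exploit that $g'(s) = \sum_{k=1}^K \rho_k G_k(w(s))$ and that $w(s) < d_{\max}$ for every $s > 0$ (as noted in the discussion preceding \eqref{eq:tau}). In the generic case $w_\vartheta < d_{\max}$, continuity of $w$ on $[0, T]$ gives $W^* := \sup_{s \in [0, T]} w(s) < d_{\max}$, so $g'(s) \ge c := \sum_{k=1}^K \rho_k G_k(W^*) > 0$, and by monotonicity the set $\{s \in [0, T] : |g(s)| < \kappa\}$ has Lebesgue measure at most $2\kappa/c$, uniformly in $x$ and $t$. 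The delicate edge case is $w_\vartheta = d_{\max} < \infty$, where $g'(0+)=0$ and the preceding bound fails near $s=0$; there I would split $[0, T] = [0, \epsilon] \cup [\epsilon, T]$, use the trivial bound $\epsilon$ on the first piece, and on the second piece use $2\kappa/c_\epsilon$ with $c_\epsilon = \sum_{k=1}^K \rho_k G_k(\sup_{s \in [\epsilon, T]} w(s)) > 0$. Combining the three estimates and choosing first $\epsilon$ and then $\kappa$ small enough produces the required bound, with the derivative lower bound for $g$ and the careful treatment of the edge case at $s=0$ being the principal ingredients.
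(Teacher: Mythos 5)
Your proof is correct and follows essentially the same route as the paper's, though with two small presentational differences worth noting. First, you decompose the integrand directly via a union bound over the two arms of the corner: $(\delta_{w(s)}^+ \times \Gamma_k)(C_{(a,b)}^\kappa) \le 1_{\{|w(s)-a|<\kappa\}} + \Gamma_k((b-\kappa,b+\kappa))$, then bound the horizontal term by $2\lambda_k\kappa$ via Fubini and the vertical term by controlling $\int_0^t 1_{\{|w(s)+s-(x+t)|<\kappa\}}\,ds$. The paper instead performs a case analysis on where $w(s)+s$ sits relative to $[(x-\kappa)^+ + t,\, x+\kappa+t]$, bounding the integrand by $0$, $1$, or the tail increment $h_k(s,\kappa)$, and the two decompositions lead to the same two pieces. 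Second, you invoke the ``wait a short time before the workload decreases below $d_{\max}$'' device only in the edge case $w_\vartheta = d_{\max}<\infty$, observing that when $w_\vartheta < d_{\max}$ the constant $c = \sum_k \rho_k G_k(w_\vartheta \vee w_u)$ is already strictly positive; the paper applies the device uniformly by setting $M = w_u\vee w(\delta)$ with $\delta = \varepsilon/(4\lambda_+)$, which automatically handles all cases at once. Your case split is correct (with $w_u < d_{\max}$ because $\rho>1$, and $\sup_{[0,T]} w = w_\vartheta \vee w_u$ by the monotonicity of workload fluid solutions), and the key estimate is the same: the Lebesgue measure of $\{s : |w(s)+s-(x+t)|<\kappa\}$ is $O(\kappa/c)$, up to a small additive term from the initial sub-interval, because $(w+\iota)' = \sum_k\rho_k G_k(w(\cdot)) \ge c$ away from $d_{\max}$.
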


To ease the reader's efforts to follow the proof of Lemma \ref
{prop:NearBoundary}
given below, we outline the basic strategy. Having a good understanding
of this deterministic
argument will assist the reader in following the stochastic
generalization used to prove
Lemma~\ref{lem:BndReg}. The basic idea in this case is to use \eqref
{fdevoeq2} on a given
$\kappa$ enlargement of a corner set. Then \eqref{eq:InitialCorner} can
be used to bound
the contribution from mass present in the system at time zero. Next one
determines
the time interval during which mass must arrive in order to contribute
to the vertical portion of the
enlarged corner set. As the reader will see, the end points of this
time interval can be expressed
in terms of the function $\tau(\cdot)$. Then one obtains a bound on the
mass that can be present
in the horizontal portion of the enlarged corner set. This second part
turns out to be fairly easy to
bound, as the reader will see. Hence the main difficultly is to bound
the amount of mass that falls
in the vertical portion. This relies on bounding the length of the
aforementioned time interval, which
can be done by demonstrating that the function $w(\cdot)+\iota(\cdot)$
increases sufficiently quickly.
But $w(\cdot)+\iota(\cdot)$ actually increases quite slowly at times
when the workload fluid model
solution is near $d_{\max}$, i.e., possibly at small times. So one must
wait a short amount
of time (until time $\delta$ in the proof) before implementing this
strategy during which just a small amount of mass enters the entire system.
Once that small amount of time has elapsed, \eqref{eq:flwp} can be used
to obtain the desired bound.
The mathematical details are given next.

\begin{proof}[Proof of Lemma~\ref{prop:NearBoundary}] Fix $T,
\varepsilon>0$.
Set $\lambda_+=\sum_{k=1}^K\lambda_k$, $\delta=\varepsilon
/(4\lambda_+)$,
$M=w_u\vee w(\delta)$, and $c=\sum_{k=1}^K\rho_k G_k(M)$. Note that
$c>0$ since $M<d_{\max}$.
Further, by monotonicity properties of workload fluid model solutions,
$w(u)\le M$ for all $u\ge\delta$.

For $t\in[0, T]$, $1\le k\le K$, $x, y\in\Rp$, and $\kappa>0$, by
\eqref
{fdevoeq2},
\[
\zeta_k(t)\left(C_{(x, y)}^{\kappa}\right)
=
\zeta_k(0)\left(\left(C_{(x, y)}^{\kappa}\right)_t\right)
+
\lambda_k\int_0^t \left(\delta_{w(s)}^+\times\Gamma_k\right
)\left(\left
(C_{(x, y)}^{\kappa}\right)_{t-s}\right)\der s.
\]
By \eqref{eq:InitialCorner} there exists $\kappa_0$ such that for all
$0<\kappa<\kappa_0$,
\[
\max_{1\le k\le K} \sup_{t\in[0, T]} \sup_{x, y\in\Rp}\zeta
_k(0)\left
(\left(C_{(x, y)}^{\kappa}\right)_t\right)
<\frac{\varepsilon}{4}.
\]
Hence, for $t\in[0, T]$, $1\le k\le K$, $x, y\in\Rp$, and $0<\kappa
<\kappa_0$,
%
%
\begin{equation}\label{eq:fdevoeqk}
\zeta_k(t)\left(C_{(x, y)}^{\kappa}\right)
<
\frac{\varepsilon}{4}
+
\lambda_k\int_0^t \left(\delta_{w(s)}^+\times\Gamma_k\right
)\left(\left
(C_{(x, y)}^{\kappa}\right)_{t-s}\right)\der s.
\end{equation}
We must show that there exists $\kappa^*\le\kappa_0$ such that for all
$t\in[0, T]$, $1\le k\le K$, $x, y\in\Rp$, and $0<\kappa<\kappa^*$,
the integral term is bounded above by $3\varepsilon/4$.

Fix $t\in[0, T]$, $1\le k\le K$, and $x, y\in\Rp$.
For $s\in[0, T]$ and $\kappa>0$, let
\[
h_k(s, \kappa)=G_k((y-\kappa)^++t-s)-G_k(y+\kappa+t-s).
\]
For $0<\kappa<\kappa_0$, the integrand in \eqref{eq:fdevoeqk}
at time $s\in[0, t]$ is bounded above by
\[
\begin{cases}
0, &\hbox{if } w(s)+s<(x-\kappa)^++t, \\
1, &\hbox{if } (x-\kappa)^+ + t\le w(s)+s \le x+\kappa+t, \\
h_k(s, \kappa), &\hbox{if }x+\kappa+t<w(s)+s.
\end{cases}
\]
Equivalently, for $0<\kappa<\kappa_0$,
using continuity and monotonicity properties of $w(\cdot)+\iota(\cdot)$,
the integrand in \eqref{eq:fdevoeqk} at time $s\in[0, t]$ is bounded
above by
%
%
\begin{equation}\label{eq:bnds}
\begin{cases}
0, &\hbox{if } 0\le s<\tau((x-\kappa)^++t), \\
1, &\hbox{if } \tau((x-\kappa)^++t)\le s \le\tau(x+\kappa+t), \\
h_k(s, \kappa), &\hbox{if }\tau(x+\kappa+t)<s\le t.
\end{cases}
\end{equation}
This together with \eqref{eq:fdevoeqk} yields that for $0<\kappa
<\kappa_0$,
%
%
\begin{eqnarray}
\zeta_k(t)\left(C_{(x, y)}^{\kappa}\right)
&<&
\frac{\varepsilon}{4}
+
\lambda_k\int_{\tau((x-\kappa)^++t)\wedge t}^{\tau(x+\kappa
+t)\wedge
t}\der s
+\lambda_k\int_{\tau(x+\kappa+t)\wedge t}^t h_k(s, \kappa)\der
s\nonumber\\
&\le&
\frac{\varepsilon}{4}
+
\lambda_k\left( \tau(x+\kappa+t)\wedge t-\tau((x-\kappa
)^++t)\wedge t
\right)\nonumber\\
&&+\
\lambda_k \int_{(y-\kappa)^+}^{y+\kappa} G_k(u) \der u\nonumber\\
&\le&
\frac{\varepsilon}{4}
+
\lambda_k\left( \tau(x+\kappa+t)\wedge t-\tau((x-\kappa
)^++t)\wedge t
\right)
+
2\lambda_k\kappa.\label{eq:taumin}
\end{eqnarray}

For $0<\kappa<\kappa_0$, let
\[
\Delta(\kappa)= \tau(x+\kappa+t)\wedge t-\tau((x-\kappa
)^++t)\wedge t.
\]
Fix $0<\kappa<\kappa_0$.
If $\tau((x-\kappa)^++t)\ge t$, $\Delta(\kappa)=0$.
Also, if $x+\kappa+t\le w(0)$, then $\Delta(\kappa)=0$.
Henceforth, we assume that $\tau((x-\kappa)^++t)<t$ and $x+\kappa+t>w(0)$.
If $ \tau(x+\kappa+t)\wedge t\le\delta$, $\Delta(\kappa)\le
\delta$.
Otherwise, $ \tau(x+\kappa+t)\wedge t> \delta$, and there are two cases
to consider.
First consider the case where $\tau((x-\kappa)^++t)\ge\delta$.
Then $(x-\kappa)^++t> w(0)$ since $\tau((x-\kappa)^++t)>0$.
Hence, by \eqref{eq:tauprop} and \eqref{eq:flwp}, since $\tau
((x-\kappa
)^++t)\ge\delta$,
\begin{eqnarray*}
2\kappa
&=& x+\kappa+t-(x-\kappa+t)\\
&\ge& x+\kappa+t-((x-\kappa)^++t)\\
&=&w(\tau(x+\kappa+t))+\tau(x+\kappa+t)\\
&&- w(\tau((x-\kappa)^++t))-\tau((x-\kappa)^++t)\\
&=& \sum_{k=1}^{K} \rho_k\int_{\tau((x-\kappa)^++t)}^{\tau
(x+\kappa+t)}
G_k(w(u))\der u\\
&\ge& \sum_{k=1}^{K} \rho_k\int_{\tau((x-\kappa)^++t)}^{\tau
(x+\kappa
+t)} G_k(M)\der u\\
&\ge& c \Delta(\kappa).
\end{eqnarray*}
In particular, $\Delta(\kappa)\le2\kappa/c$.
The other case to consider is $\tau((x-\kappa)^++t)<\delta$. Then
\[
\Delta(\kappa)
\le\tau(x+\kappa+t)\wedge t
=\left(\tau(x+\kappa+t)\wedge t-\delta\right)+\delta.
\]
Further, by \eqref{eq:tauprop} and monotoncity of $w(\cdot)+\iota
(\cdot)$,
\begin{eqnarray*}
x-\kappa+t
&\le& (x-\kappa)^++t
\le w(\tau((x-\kappa)^++t))+\tau((x-\kappa)^++t)\\
&\le& w(\delta)+\delta
\le w(\tau(x+\kappa+t))+\tau(x+\kappa+t)
= x+\kappa+ t.
\end{eqnarray*}
Then, by \eqref{eq:tauprop} and \eqref{eq:flwp},
\begin{eqnarray*}
2\kappa
&\ge& x+\kappa+t- w(\delta)-\delta\\
&=&w(\tau(x+\kappa+t))+\tau(x+\kappa+t)- w(\delta)-\delta\\
&=& \sum_{k=1}^{K} \rho_k\int_{\delta}^{\tau(x+\kappa+t)}
G_k(w(u))\der
u\\
&\ge& \sum_{k=1}^{K} \rho_k\int_{\delta}^{\tau(x+\kappa+t)}
G_k(M)\der
u\\
&=& c\left(\tau(x+\kappa+t)-\delta\right).
\end{eqnarray*}
In particular,
%
%
\begin{equation}\label{eq:Delta}
\Delta(\kappa)\le\frac{2\kappa}{c}+ \delta,
\end{equation}
which is the largest of the four upper bounds.

By combining \eqref{eq:taumin}, \eqref{eq:Delta}, and the definition of
$\delta$
it follows that for $0<\kappa<\kappa_0$,
\begin{eqnarray*}
\zeta_k(t)\left(C_{(x, y)}^{\kappa}\right)
&<&
\frac{\varepsilon}{4}
+
\frac{2\lambda_k\kappa}{c}
+
\lambda_k\delta
+
2\lambda_k\kappa
\le
\frac{\varepsilon}{2}
+
\frac{2\lambda_k\kappa}{c}
+
2\lambda_k\kappa.
\end{eqnarray*}
Let $0<\kappa^*\le\kappa_0$ be such that for all $0<\kappa<\kappa^*$
\[
\max_{1\le k\le K}2\lambda_k\left(\frac{1}{c}+1\right)\kappa
<\frac
{\varepsilon}{2}.
\]
Then, for all $0<\kappa<\kappa^*$,
$\zeta_k(t)(C_{(x, y)}^{\kappa})<\varepsilon$.
Since $1\le k\le K$, $t\in[0, T]$, and $x, y, \in\Rp$ were chosen
arbitrarily, the result follows.
\end{proof}

Next, Lemma~\ref{prop:NearBoundary} is used to prove continuity for
Theorem~\ref{thrm:eu}.
For this, we need to specify a metric on $\M_2$ that induces the
topology of weak convergence.
For efficiency, we will specify such a metric on $\M_i$ for $i=1, 2$.
Given $i=1, 2$ and
$\zeta$, $\zeta'\in\M_i$, let ${\bf d}[\zeta, \zeta']$ denote the Prohorov
distance between
$\zeta$ and $\zeta'$. Specifically,
\begin{eqnarray*}
{\bf d}[\zeta, \zeta']
&=&
\inf\{ \varepsilon>0 : \zeta(B)\le\zeta'(B^{\varepsilon})
+\varepsilon
\hbox{ and }\zeta'(B)\le\zeta(B^{\varepsilon}) +\varepsilon\\
&&\qquad\hbox{ for all closed }B\in\CB_i\}.
\end{eqnarray*}
Then the following is a natural metric on $\M_i^K$. Given $i=1, 2$ and
$\zeta, \zeta'\in\M_i^K$,
define
%
%
\begin{equation}\label{eq:metric}
{\bf d}_K[\zeta, \zeta'] =\max_{1\le k\le K} {\bf d}[\zeta_k, \zeta'_k].
\end{equation}
In the following proof, we show that for each fluid model solution
$\zeta(\cdot)$,
$\lim_{h\to0} {\bf d}_K[\zeta(t+h), \zeta(t)]=0$ for all $t\in
\ptime$.

\begin{proof}[Proof of Property~3 for Theorem~\ref{thrm:eu}]
Let $\vartheta\in\I$ and let $\zeta(\cdot)$ be the unique fluid model
solution such that
$\zeta(0)=\vartheta$. It suffices to prove continuity of $\zeta
_k(\cdot
)$ on $\ptime$ for
each $1\le k\le K$. Fix $1\le k\le K$. We prove continuity of $\zeta
_k(\cdot)$
on $[0, T]$ for each $T>0$. Fix $T, \varepsilon>0$. By Lemma \ref
{prop:NearBoundary}, there exists
$\kappa$ such that for all $0<h<\kappa$,
\[
\sup_{s\in[0, T]}\zeta_k(s)(C^h) <\varepsilon.
\]
Let $0\le s < t \le T$ be such that $t-s<\min(\kappa, \varepsilon
/\max
(2, \lambda_k))$.
Note that $( \delta_{w(u)}^+\times\Gamma_k)(\Rp^2)=1$
for all $u\in\ptime$. Let $B\in\CB_2$ be closed and set $h=t-s>0$.
Note that $B_h\subset B^{2h}$.
By subtracting \eqref{fdevoeq2} applied to $B_h$ at time $s$ from
\eqref{fdevoeq2}
applied to $B$ at time $t$, we obtain
\begin{eqnarray*}
\zeta_k(t)(B)
&=&
\zeta_k(s)(B_h)+\lambda_k\int_s^t\left( \delta_{w(u)}^+\times
\Gamma
_k\right)(B_{t-u})\der u\\
&\le&
\zeta_k(s)(B^{2h})+\lambda_k h\\
& < &
\zeta_k(s)(B^{\varepsilon})+\varepsilon.
\end{eqnarray*}
Consider \eqref{fdevoeq2} applied to $B$ at time $s$, to $B^{2h}$ at
time $t$, and to $C^h$ at time $s$. For $u\in[0, s]$, we see that
$(w, p)\in B_u$ implies that
$(w-u, p-u)\in B$, which implies that either $(w-u, p-u)\in B\cap
C^h\subset C^h$
or $(w-u, p-u)\in B\setminus C^h$. If $(w-u, p-u)\in C^h$, then $(w,
p)\in
(C^h)_u$.
If $(w-u, p-u)\in B\setminus C^h$,
then $(w-u-h, p-u-h)\in B^{2h}$ and so $(w, p)\in(B^{2h})_{u+h}$
and $u+h=u+t-s\le t$. Therefore, for $u\in[0, s]$, $B_{s-u}\subset
(C^h)_{s-u}\cup
(B^{2h})_{t-v}$, where $v=t-(s-u+h)\in[0, t]$. Hence,
\begin{eqnarray*}
\zeta_k(s)(B)\le\zeta_k(t)(B^{2h})+ \zeta_k(s)(C^h)
<
\zeta_k(t)(B^{\varepsilon})+ \varepsilon.
\end{eqnarray*}
Then for all $0\le s<t\le T$ such that $0<t-s<\min(\kappa,
\varepsilon
/\max(2, \lambda_k))$,
\[
{\bf d}[\zeta_k(t), \zeta_k(s)]<\varepsilon.
\]
Hence, $\zeta_k(\cdot)$ is continuous on $[0, T]$. Since $T>0$ was
arbitrary, $\zeta_k(\cdot)$
is continuous on $\ptime$. Since $1\le k\le K$ was arbitrary, $\zeta
(\cdot)$ is continuous on $\ptime$.
\end{proof}

Now that each statement in Theorem~\ref{thrm:eu} has been verified, we
prove Theorem~\ref{thrm:is}.

\begin{proof}[Proof of Theorem~\ref{thrm:is}]
First suppose that $\vartheta\in\I$ is an invariant state. We must show
that $\vartheta\in\J$, i.e.,
we must show that for some $w\in[w_l, w_u]$, $\vartheta_k(B)=\theta
_k^w(B)$ for all
$B\in\CB_2$ and $1\le k\le K$.
Since $\vartheta$ is an invariant state, Theorem~\ref{thrm:eu} Property
1 implies that the unique workload
fluid model solution such that $w(0)=w_{\vartheta}$ is constant, i.e.,
$w(t)=w_{\vartheta}$ for all $t\in\ptime$.
Then, by the monotonicity properties of workload fluid model solutions,
$w_l\le w_{\vartheta}\le w_u$.
Let $w=w_{\vartheta}$. Fix $1\le k\le K$, $B\in{\mathcal P}$,
and $t>w$. Then $B=[a, \infty)\times[c, \infty)$ for some $a, c\in
\Rp$.
Further, $a+t-w>0$ and $(w-a)^+<t$. Hence, by \eqref{fdevoeq2}, we have
\begin{eqnarray*}
\vartheta_k(B)
&=& \lambda_k\int_0^t\left(\delta_w^+\times\Gamma_k\right
)(B_{t-s})\der
s\\
&=& \lambda_k\int_{(a+t-w)\wedge t}^t G_k(c+t-s)\der s\\
&=& \lambda_k\int_0^{(w-a)^+} G_k(c+u)\der u\\
&=& \theta_k^w(B).
\end{eqnarray*}
Since $B\in\CP$ was arbitrary, $\vartheta_k$ and $\theta_k^w$ agree on
$\CP$.
Since $\vartheta_k$ and $\theta_k^w$ agree on ${\mathcal P}$, they
agree on ${\mathcal R}$
and therefore they agree on $\CB_2$. So $\vartheta_k=\theta_k^w$. Since
$1\le k\le K$ was arbirary, $\vartheta=\theta^w\in\J$.

Next, fix $w_l\le w\le w_u$. Then the unique fluid workload solution
$w(\cdot)$ with $w(0)=w$
is constant, i.e., $w(t)=w$ for all $t\in\ptime$. Set $\zeta
(t)=\theta
^w$ for all $t\in\ptime$.
Then, in order to show that $\zeta(\cdot)$ is a fluid model solution,
it suffices to verify that
$\zeta(\cdot)$ satisfies \eqref{fdevoeq2}. For this, it suffices to
verify that
for all $B\in{\mathcal P}$, $1\le k\le K$, and $t>0$,
\[
\theta_k^w(B) = \theta_k^w(B_t)+\lambda_k\int_0^t\left(\delta
_w^+\times
\Gamma_k\right)(B_{t-s})\der s.
\]
For this fix $B\in{\mathcal P}$, $1\le k\le K$, and $t>0$. We have
\begin{eqnarray*}
\theta_k^w(B_t)&+&\lambda_k\int_0^t\left(\delta_w^+\times\Gamma
_k\right
)(B_{t-s})\der s\\
&=&
\theta_k^w(B_t)+\lambda_k\int_ {(a+t-w)^+\wedge t}^t G_k(c+t-s)\der s.
\end{eqnarray*}

\noindent\textit{Case~1}: Suppose that $a\ge w$.
Then $(a+t-w)^+\wedge t= t$ and it follows that
\[
\theta_k^w(B_t)+\lambda_k\int_0^t\left(\delta_w^+\times\Gamma
_k\right
)(B_{t-s})\der s
=0
=\theta_k^w(B).
\]

\noindent\textit{Case~2}:
Suppose that $a<w\le a+t$.
Then $(a+t-w)^+\wedge t=a+t-w$ and
\begin{eqnarray*}
\theta_k^w(B_t)&+&\lambda_k\int_0^t\left(\delta_w^+\times\Gamma
_k\right
)(B_{t-s})\der s\\
&=&
0+\lambda_k \int_ {a+t-w} ^ t G_k(c+t-s) \der s\\
&=&
\lambda_k \int_ {0} ^ {w-a}G_k(c+u)\der u\\
&=&
\theta_k^w(B).
\end{eqnarray*}

\noindent\textit{Case~3}:
If $a+t<w$, then $(a+t-w)^+\wedge t=0$ and
\begin{eqnarray*}
\theta_k^w(B_t)&+&\lambda_k\int_0^t\left(\delta_w^+\times\Gamma
_k\right
)(B_{t-s})\der s\\
&=&
\theta_k^w([a+t, w)\times[c+t, \infty))
+
\lambda_k \int_ 0^t G_k(c+t-s)\der s\\
&=&
\lambda_k\int_0^{w-a-t}G_k(c+t+u)\der u
+
\lambda_k \int_ 0^t G_k(c+u)\der u\\
&=&
\lambda_k\int_t^{w-a}G_k(c+u)\der u
+
\lambda_k \int_ 0^t G_k(c+u)\der u\\
&=&
\lambda_k \int_ 0^{w-a} G_k(c+u)\der u\\
&=&
\theta_k^w(B).
\end{eqnarray*}
Hence $\theta^w$ is an invariant state.
\end{proof}

\section{Proof of Tightness} \label{sec:tightness}

In this section, we prove that the sequence $\{\bzn(\cdot)\}_{n\in\N}$
is relatively
compact under the standing assumption \eqref{eq:IC}.
For this, we apply \cite[Corollary~3.7.4]{ref:EK}. In particular, it
suffices to
prove compact containment and an oscillation inequality. This is done in
Lemmas~\ref{lem:compact containment} and~\ref{lem:osc} below. Throughout,
\[
\lambda_+=\sum_{k=1}^K\lambda_k\qquad\hbox{and}\qquad g_+=\sum
_{k=1}^K\frac{1}{\gamma_k}.
\]

\subsection{Preliminaries}

For $1\le k\le K$ and $t\in\ptime$, let $\lambda_k^*(t)=\lambda_kt$.
For $1\le k\le K$, let $\dead_k^*(\cdot)=\lambda_k^* (\cdot) \Gamma_k$.
Define $\lambda^*(\cdot)=(\lambda_1^*(\cdot), \dots, \lambda
_K^*(\cdot))$
and $\dead^*(\cdot)=(\dead_1^*(\cdot)\dots, \dead_K^*(\cdot))$.
By a functional law of large numbers for renewal processes, as $n\to
\infty$,
%
%
\begin{equation}\label{eq:EFLLN}
\Ebn(\cdot)\Rightarrow\lambda^*(\cdot).
\end{equation}
Further, as a special case of Lemma~\ref{lem:deadFLLN} stated below,
the following
functional law of large numbers for the deadline process holds:
as $n\to\infty$,
%
%
\begin{equation}\label{eq:deadFLLN}
\left(\CDbn(\cdot), \left\langle \chi, \CDbn(\cdot)\right\rangle \right
)\Rightarrow
\left(\dead
^*(\cdot), \left\langle \chi, \dead^*(\cdot)\right\rangle \right).
\end{equation}
To state Lemma~\ref{lem:deadFLLN}, we need to introduce some
additional notation
and asymptotic assumptions.
\paragraph{Asymptotic Assumptions (AA)}
For each $n\in\N$ suppose that we have $K$ independent sequences of
strictly positive
independent and identically distributed random variables that are
independent of the
exogenous arrival process. For each $n\in\N$, denote the $k$th
sequence by
$\{g_{k, i}^n\}_{i\in\N}$, and for each $n\in\N$ and $1\le k\le K$,
denote the distribution of
$g_{k, 1}^n$ by $\Gamma_k^n$. We assume that $\Gamma_k^n$ has a
finite mean
$1/\gamma_k^n$, but we do not necessarily assume that $\Gamma_k^n$ is
continuous.
We further assume that for each $1\le k\le K$,
%
%
\begin{equation}\label{eq:ui}
\lim_{M\to\infty} \sup_{n\in\N} \left\langle \chi1_{(M, \infty)},
\Gamma
_k^n\right\rangle =0,
\end{equation}
and, as $n\to\infty$,
%
%
\begin{equation}\label{eq:weak}
\Gamma_k^n\wk\Gamma_k.
\end{equation}

One interpretation of (AA) is that for large $n$ and $1\le k\le K$,
$\Gamma_k^n$ approximates $\Gamma_k$.
So, for $n\in\N$ and $1\le k\le K$, one can regard $\{g_{k, i}^n\}
_{i\in
\N}$
as a collection of approximate patience times.
Here we are primarily interested in two particular choices of
approximate patience times. These are introduced below, shortly after
the statement of Lemma~\ref{lem:ResDeadFLLN}.

Note that \eqref{eq:ui} is a uniform integrability condition and,
together with \eqref{eq:weak},
it implies that for each $1\le k\le K$, as $n\to\infty$,
\[
1/\gamma_k^n\to1/\gamma_k.
\]
For each $n\in\N$, let $\Gamma^n=\Gamma_1^n\times\dots\times
\Gamma_K^n$
and let
$\Gamma=\Gamma_1\times\dots\times\Gamma_K$. For $n\in\N$, $1\le
k\le
K$, and
$t\in\ptime$, define
\[
{\mathcal G}_k^n(t)=\sum_{i=1}^{E_k^n(t)} \delta_{g_{k, i}^n}^+
\qquad\hbox{and}\quad
\bar{\mathcal G}_k^n(t)=\frac{1}{n}{\mathcal G}_k^n(t).
\]
For $n\in\N$, let ${\mathcal G}^n(\cdot)=({\mathcal
G}_1^n(\cdot
), \dots, {\mathcal G}_K^n(\cdot))$
and $\bar{\mathcal G}^n(\cdot)=(\bar{\mathcal G}_1^n(\cdot
), \dots,
\bar{\mathcal G}_K^n(\cdot))$.
So then, for each $n\in\N$, ${\mathcal G}^n(\cdot)\in\bD
(\ptime, \M_1^K)$. We refer to
${\mathcal G}^n(\cdot)$, $n\in\N$, as a deadline related process.

\begin{lemma}\label{lem:deadFLLN} Suppose that (AA) holds. Then, as
$n\to\infty$,
\[
\left(\bar{\mathcal G}^n(\cdot), \left\langle \chi, \bar{\mathcal
G}^n(\cdot
)\right\rangle
\right)
\Rightarrow
\left( \CD^*(\cdot), \left\langle \chi, \CD^*(\cdot)\right\rangle \right).
\]
\end{lemma}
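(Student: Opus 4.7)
The plan is to prove the convergence coordinate-by-coordinate and then combine via the independence of classes and the fact that the limit is deterministic and continuous. Fix $1\le k\le K$ and $T>0$. The central identity is that, for any Borel $g:\Rp\to\R$ with $g(0)=0$ (or, more generally, whenever integrability permits),
$$\langle g,\bar{\mathcal G}^n_k(t)\rangle \;=\; \frac{1}{n}\sum_{i=1}^{E^n_k(t)} g(g^n_{k,i}),$$
so the task reduces to analyzing this random partial sum for $g$ in a convergence-determining subclass of $\C_b(\Rp)$ and for the unbounded test function $g=\chi$.

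The core estimate is a triangular-array FLLN: for each bounded Borel $g$,
$$\sup_{s\in[0,T]}\Bigl|\tfrac{1}{n}\sum_{i=1}^{\lfloor ns\rfloor} g(g^n_{k,i}) - s\,\langle g,\Gamma_k\rangle\Bigr| \longrightarrow 0 \quad\text{in probability}.$$
I would obtain this by applying Doob's $L^2$ maximal inequality to the mean-zero independent summands $g(g^n_{k,i})-\langle g,\Gamma_k^n\rangle$, producing a bound of order $\|g\|_\infty^2 T/n$ for the scaled supremum, and then using $\langle g,\Gamma_k^n\rangle\to\langle g,\Gamma_k\rangle$ from \eqref{eq:weak}. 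Composing with the arrival FLLN \eqref{eq:EFLLN} via a standard random-time-change argument (the limit $\lambda_k\cdot$ is continuous and monotone) then yields $\langle g,\bar{\mathcal G}^n_k(\cdot)\rangle\Rightarrow \lambda_k\cdot\langle g,\Gamma_k\rangle$ in $\bD([0,T],\R)$. For the first-moment component I would truncate by $\chi_M(x)=x\wedge M\in\C_b(\Rp)$, apply the bounded case to $\chi_M$, and control the tail uniformly in $n$ through
$$\bE\,\langle\chi-\chi_M,\bar{\mathcal G}^n_k(T)\rangle \;\le\; \frac{\bE[E^n_k(T)]}{n}\cdot\sup_{m\in\N}\bigl\langle\chi\, 1_{(M,\infty)},\Gamma_k^m\bigr\rangle,$$
which is made arbitrarily small by \eqref{eq:ui}. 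Monotonicity in $t$ of the tail process lifts this to a uniform bound on $[0,T]$, and Markov's inequality combined with a double-limit argument (first $n\to\infty$, then $M\to\infty$) produces $\langle\chi,\bar{\mathcal G}^n_k(\cdot)\rangle \Rightarrow \lambda_k\cdot/\gamma_k$ in $\bD([0,T],\R)$.

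To conclude measure-valued functional convergence in $\bD([0,T],\M_1)$, I would verify tightness via \cite[Corollary 3.7.4]{ref:EK}. Compact containment follows from the total-mass bound $\bar{\mathcal G}^n_k(t)(\Rp)=E^n_k(t)/n$ together with \eqref{eq:ui}: by Prohorov, the family $\{\bar{\mathcal G}^n_k(t):t\in[0,T],\,n\in\N\}$ lies in a fixed compact subset of $\M_1$ with probability arbitrarily close to one, since the mass in $[M,\infty)$ has expectation bounded by $(\bE[E^n_k(T)]/n)\sup_n\Gamma_k^n([M,\infty))\to 0$ as $M\to\infty$. The oscillation estimate is immediate because the total mass grows monotonically with asymptotically continuous increments. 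The two steps above force any subsequential limit to agree with $\CD^*_k(\cdot)$ on a countable convergence-determining class, so $\bar{\mathcal G}^n_k(\cdot)\Rightarrow \CD^*_k(\cdot)$; joint convergence across $k$ and with the first-moment processes follows from class independence and the deterministic, continuous nature of the limit. The main obstacle is threading the uniform-integrability hypothesis \eqref{eq:ui} through the random time change $E^n_k(t)$ for the unbounded test function $\chi$: direct moment bounds on $g^n_{k,1}$ are unavailable and its variance need not be uniformly controlled, which forces the truncation-plus-tail-control argument to be conducted delicately in the simultaneous presence of a triangular-array structure and a random summation limit.
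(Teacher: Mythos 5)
Your proposal is correct, but it takes a genuinely different route from the paper. The paper proves Lemma~\ref{lem:deadFLLN} by citation: it notes that the number of classes and the deadline distributions play the same roles here that the number of routes and service-time distributions play for the load process of Gromoll and Williams~\cite{ref:GW}, then checks that the relevant assumptions of \cite[Theorem~5.1]{ref:GW} --- namely \cite[(4.8)--(4.13)]{ref:GW} --- are guaranteed by positivity and finite means of the deadlines, by \eqref{eq:EFLLN}, and by conditions \eqref{eq:ui}--\eqref{eq:weak} of (AA). In contrast, you give a self-contained proof: a triangular-array FLLN via Doob's $L^2$ maximal inequality for the centered summands $g(g^n_{k,i})-\langle g,\Gamma^n_k\rangle$, a random-time-change through the arrival FLLN \eqref{eq:EFLLN}, a truncation-plus-tail-control argument using \eqref{eq:ui} (together with the Wald-type identity $\bE[\langle(\chi-M)^+,\bar{\mathcal G}^n_k(T)\rangle]=\frac{1}{n}\bE[E^n_k(T)]\cdot\langle(\chi-M)^+,\Gamma^n_k\rangle$ and monotonicity in $t$) to handle the unbounded test function $\chi$, and then tightness in $\bD(\ptime,\M_1)$ via compact containment and oscillation bounds followed by limit identification on a countable convergence-determining class. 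All of these steps are sound and, with the independence of classes and the deterministic, continuous limits, deliver the stated joint convergence. What the paper's approach buys is brevity and modularity (it defers the combinatorics to an external result designed to be reused); what your approach buys is transparency and self-containment, making the role of each hypothesis of (AA) visible without leaving the paper.
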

Lemma~\ref{lem:deadFLLN} holds by \eqref{eq:EFLLN} and \cite[Theorem
5.1]{ref:GW}. To see this, note the following.
Respectively, the number of classes and deadline distributions play the
same role for the deadline related processes defined
here as the number of routes and service time distributions do for the
load process defined in~\cite{ref:GW}.
Then \cite[Theorem 5.1]{ref:GW} holds under \cite[Assumption
(A)]{ref:GW}, and the conditions in \cite[Assumption (A)]{ref:GW}
relevant to \cite[Theorem 5.1]{ref:GW} are \cite[(4.8)--(4.13)]{ref:GW}.
Conditions \cite[(4.8)--(4.9)]{ref:GW} hold here because the
deadlines are assumed to be strictly positive and have finite means.
Condition \cite[(4.10)]{ref:GW} corresponds to \eqref{eq:EFLLN}
above.
Conditions \cite[(4.11)--(4.13))]{ref:GW} hold here due to \eqref{eq:ui}
and \eqref{eq:weak} above.

For each $n\in\N$ and $1 \le k \le K$, define the fluid scaled
increments as follows:
for $0\le s\le t<\infty$,
\[
\Ebn_k(s, t) = \Ebn_k(t) - \Ebn_k(s)
\qquad\hbox{and}\qquad
\lambda^*_k(s, t) = \lambda^*_k(t)-\lambda^*_k(s),
\]
and
\[
\CGbn_k(s, t) = \CGbn_k(t) - \CGbn_k(s)
\qquad\hbox{and}\qquad
\dead^*_k(s, t) = \dead^*_k(t)-\dead^*_k(s).
\]
Then, by \eqref{eq:EFLLN}, given $T, \varepsilon, \eta>0$,
%
%
\begin{equation}\label{eq:EincFLLN}
\liminf_{n\to\infty} \bP
\left(
\max_{1\le k \le K} \sup_{0\le s \le t \le T}
\left| \Ebn_k(s, t) - \lambda^*_k(s, t) \right| \le\varepsilon
\right) \ge1-\eta.
\end{equation}
Similarly, the following corollary holds as a consequence of
Lemma~\ref{lem:deadFLLN} and continuity of the measures
$\Gamma_k$, $1 \le k \le K$.
\begin{cor}\label{cor:deadlineLimit}
Let $T, \varepsilon, \eta> 0$ and $0\le a\le b< \infty$. Suppose that
(AA) holds. Then
\[
\liminf_{n\to\infty} \bP
\left(
\max_{1\le k \le K} \sup_{0\le s \le t \le T}
\left| \CGbn_k(s, t)((a, b)) - \dead^*_k(s, t)((a, b)) \right| \le
\varepsilon
\right) \ge1-\eta.
\]
\end{cor}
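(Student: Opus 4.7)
The plan is to deduce Corollary \ref{cor:deadlineLimit} from Lemma \ref{lem:deadFLLN} by the standard sandwich argument that converts convergence against continuous bounded test functions into convergence against the indicator of a continuity set. The key inputs are: (i) Lemma \ref{lem:deadFLLN} gives $\CGbn_k(\cdot)\Rightarrow\dead^*_k(\cdot)$ in $\bD(\ptime,\M_1)$ for each $k$; (ii) the limit $t\mapsto\dead^*_k(t)=\lambda_kt\Gamma_k$ is continuous as an $\M_1$-valued function, so the $J_1$-convergence on $\bD(\ptime,\M_1)$ is equivalent to uniform convergence on compact time intervals (with respect to the Prohorov metric on $\M_1$); (iii) $\Gamma_k$ is continuous, hence $\Gamma_k(\{a\})=\Gamma_k(\{b\})=0$, so $(a,b)$ is a $\Gamma_k$-continuity set.

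First I would pass to convergence in probability of $\sup_{t\in[0,T]}{\bf d}[\CGbn_k(t),\dead^*_k(t)]\to0$ for each $k$, using continuity of the limit. Next, given $\varepsilon>0$, I would use continuity of $\Gamma_k$ at $a$ and $b$ to pick bounded continuous functions $f^-,f^+:\Rp\to[0,1]$ with $f^-\le1_{(a,b)}\le f^+$ and $\lambda_kT\langle f^+-f^-,\Gamma_k\rangle<\varepsilon/2$. Sandwiching then gives, for every $t\in[0,T]$,
\begin{equation*}
\langle f^-,\CGbn_k(t)\rangle\le\CGbn_k(t)((a,b))\le\langle f^+,\CGbn_k(t)\rangle,
\end{equation*}
while $|\langle f^\pm,\dead^*_k(t)\rangle-\dead^*_k(t)((a,b))|\le\lambda_kT\langle f^+-f^-,\Gamma_k\rangle<\varepsilon/2$. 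Since $\CGbn_k(\cdot)\Rightarrow\dead^*_k(\cdot)$ uniformly on $[0,T]$ in the weak topology (by (ii)), with probability at least $1-\eta$ for all $n$ large,
\begin{equation*}
\sup_{t\in[0,T]}\max_{1\le k\le K}\bigl(|\langle f^-,\CGbn_k(t)-\dead^*_k(t)\rangle|\vee|\langle f^+,\CGbn_k(t)-\dead^*_k(t)\rangle|\bigr)<\varepsilon/2.
\end{equation*}
Combining these bounds gives $\sup_{t\in[0,T]}\max_k|\CGbn_k(t)((a,b))-\dead^*_k(t)((a,b))|\le\varepsilon$ on this event.

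Finally, since $\CGbn_k(s,t)((a,b))=\CGbn_k(t)((a,b))-\CGbn_k(s)((a,b))$ and similarly for $\dead^*_k(s,t)((a,b))$, the triangle inequality yields
\begin{equation*}
\sup_{0\le s\le t\le T}|\CGbn_k(s,t)((a,b))-\dead^*_k(s,t)((a,b))|\le2\sup_{u\in[0,T]}|\CGbn_k(u)((a,b))-\dead^*_k(u)((a,b))|,
\end{equation*}
so replacing $\varepsilon$ by $\varepsilon/2$ throughout delivers the desired uniform bound with probability at least $1-\eta$.

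The main (only) obstacle is that weak convergence in $\M_1$ does not generally preserve the measure of a fixed Borel set, and the map $\zeta\mapsto\zeta((a,b))$ is not continuous. The continuity of $\Gamma_k$ at the boundary points $a,b$ is precisely what is needed to sandwich $1_{(a,b)}$ between continuous test functions whose $\Gamma_k$-integrals are arbitrarily close; this is where the hypothesis that the deadline distributions are continuous enters in an essential way. Uniformity in $t$ comes for free from the $\M_1$-continuity of the limit, and uniformity in the increment $(s,t)$ is an immediate triangle inequality on top of uniformity in $t$.
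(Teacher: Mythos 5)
Your argument is correct and takes essentially the approach the paper indicates: deduce the uniform statement from Lemma~\ref{lem:deadFLLN} together with the continuity of $\Gamma_k$ (to control the discontinuity of $\zeta\mapsto\zeta((a,b))$) and the fact that the deterministic limit path is continuous (to upgrade $J_1$-convergence to uniform convergence on $[0,T]$), finishing with the triangle inequality to pass to increments. The paper gives no proof beyond asserting that the corollary follows from Lemma~\ref{lem:deadFLLN} and continuity of $\Gamma_k$, so your sandwich argument supplies exactly the details that were left implicit.
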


We also wish to consider a version of a deadline related process
involving residual approximate patience times.
We refer to such a process as a residual deadline related process.
To define these, let $\{g_{k, i}^n\}_{i\in\N}$, $1\le k\le K$, and
$n\in
\N$,
satisfy (AA). For each $n\in\N$, $1\le k\le K$ and $t\in\ptime$, let
\[
g_{k, i}^n(t)=\left( g_{k, i}^n-\left(t-t_{k, Z_k^n(0)+i}^n\right
)^+\right)^+,
\]
and set
\[
{\mathcal R}_k^n(t)=\sum_{i=1}^{E_k^n(t)} \delta_{g_{k, i}^n(t)}^+
\qquad\hbox{and}\qquad
\bar{\mathcal R}_k^n(t)=\frac{{\mathcal R}_k^n(t)}{n}.
\]
Then, for each $n\in\N$ and $1\le k\le K$, ${\mathcal R}_k^n(\cdot
)\in\bD(\ptime, \M_1)$.
For $n\in\N$, set ${\mathcal R}^n(\cdot)=({\mathcal R}_1^n(\cdot),
\dots, {\mathcal R}_K^n(\cdot))$
and $\bar{\mathcal R}^n(\cdot)=(\bar{\mathcal R}_1^n(\cdot), \dots
, \bar{\mathcal R}_K^n(\cdot))$.
For each $1\le k\le K$ and $t\in\ptime$, let ${\mathcal R}_k^*(t)\in
\M
_1$ be the measure that is
absolutely continuous with respect to Lebesgue measure with density
$\lambda_k(G_k(\cdot)-\break G_k(\cdot+t))$.
Set ${\mathcal R}^*(\cdot)=({\mathcal R}_1^*(\cdot), \dots
, {\mathcal R}_K^*(\cdot))$.

\begin{lemma}\label{lem:ResDeadFLLN} Suppose that (AA) holds. Then, as
$n\to\infty$,
\[
\bar{\mathcal R}^n(\cdot)\Rightarrow{\mathcal R}^*(\cdot).
\]
\end{lemma}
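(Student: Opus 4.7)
The plan is to combine Corollary \ref{cor:deadlineLimit} with a time-discretization argument. The intuition is that class $k$ arrivals occur at rate $\lambda_k$ and carry deadlines drawn from $\Gamma_k$, and a job arriving at time $s$ with initial deadline $g$ has residual deadline $(g-(t-s))^+$ at time $t$; averaging the shifted deadline distributions over $s\in[0,t]$ produces the density $\lambda_k(G_k(v)-G_k(v+t))$ in the residual variable $v$, which matches the definition of $\mathcal R_k^*(t)$.

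I would first show that $\sup_{t\in[0,T]}|\langle f,\CRbn_k(t)\rangle-\langle f,\mathcal R_k^*(t)\rangle|\to 0$ in probability for each fixed Lipschitz $f\in\C_b(\Rp)$ with $f(0)=0$, since such functions form a separating class for $\M_{1,0}$. Fix $T,\Delta>0$ and partition $[0,T]$ into intervals $[t_\ell,t_{\ell+1}]$ of length $\Delta$. For a class $k$ arrival whose time lies in $(t_\ell,t_{\ell+1}]$, replacing its arrival time by $t_\ell$ perturbs the argument of $f$ by at most $\Delta$, so the aggregate error is bounded by $L_f\Delta\,\Ebn_k(T)$, which is negligible uniformly in $t$ by \eqref{eq:EFLLN}. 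After this replacement, the $\ell$th block becomes $\langle f_{\ell,t},\CGbn_k(t_\ell,t_{\ell+1})\rangle$ with $f_{\ell,t}(g)=f((g-(t-t_\ell))^+)$, which is bounded and continuous on $\Rp$. Approximating $f_{\ell,t}$ by step functions, invoking Corollary~\ref{cor:deadlineLimit}, and using continuity of $\Gamma_k$ to handle endpoints of the approximating intervals shows the block is close to $\lambda_k\Delta\langle f_{\ell,t},\Gamma_k\rangle$ uniformly in $\ell$ and $t$ with high probability. Summing over $\ell$ and sending $\Delta\downarrow 0$ yields a Riemann sum converging to $\lambda_k\int_0^t\langle f((\cdot-(t-s))^+),\Gamma_k\rangle\,\der s=\langle f,\mathcal R_k^*(t)\rangle$ (after a change of variables).

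Functional convergence in $\bD(\ptime,\M_1^K)$ then follows from \cite[Corollary 3.7.4]{ref:EK} once compact containment and an oscillation estimate are established. Compact containment uses $\langle 1,\CRbn_k(t)\rangle\le\Ebn_k(t)$ together with \eqref{eq:EFLLN} and, to bound the tail mass beyond $[0,M]$, the uniform integrability condition \eqref{eq:ui}. Over a short increment $[s,t]$, the measure changes by (a) a leftward shift of old atoms by $t-s$ with truncation at the origin, and (b) the contribution from new arrivals, of total mass $\Ebn_k(s,t)=O(t-s)$ by \eqref{eq:EincFLLN}. A standard computation gives that the Prohorov distance between a measure and its shift by $h$ is bounded by $h\vee \CRbn_k(s)([0,h))$. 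Hence the main technical point is to show that $\CRbn_k(s)([0,h))$ is of order $h$ uniformly in $s\in[0,T]$ and $n$ with high probability; this is where the real work lies, and I would obtain it by applying the discretization argument of the previous paragraph to a smoothed version of $1_{[0,h)}$, whose fluid limit is $\mathcal R_k^*(s)([0,h))\le\lambda_k h$. Continuity of the limit $\mathcal R^*(\cdot)$ in $t$, inherited from continuity of $G_k$, finally allows finite-dimensional convergence together with this tightness to be upgraded to convergence in the Skorohod $J_1$ topology.
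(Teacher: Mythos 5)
Your overall architecture — a time-discretization argument for the limit identification (driven by Corollary \ref{cor:deadlineLimit}) combined with compact containment and an oscillation bound for tightness — matches the paper's (Lemmas \ref{lem:cc1}, \ref{lem:BndReg1}, \ref{lem:osc1}, and the characterization via Riemann sums at the start of Section 6). The test-function version of the characterization step is sound: replacing arrival times in $(t_\ell,t_{\ell+1}]$ by $t_\ell$ incurs a Lipschitz error $O(\Delta)$ per arrival, the resulting block integrals converge by Corollary \ref{cor:deadlineLimit}, and the Riemann sum limit is exactly $\langle f,\CR_k^*(t)\rangle$. (Restricting to $f$ with $f(0)=0$ is the right move, since it makes $\langle f,\CRbn_k(t)\rangle$ and $\frac{1}{n}\sum_i f(g_{k,i}^n(t))$ coincide and gives a separating class for $\M_{1,0}$.)

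The gap is in the proposed oscillation bound, specifically the claim that you can control $\CRbn_k(s)((0,h))$ uniformly in $s$ and $n$ "by applying the discretization argument \dots to a smoothed version of $1_{[0,h)}$, whose fluid limit is $\CR_k^*(s)([0,h))\le\lambda_k h$." No such test function exists with the properties you need. To get an \emph{upper} bound on $\CRbn_k(s)((0,h))$ you need $\phi\ge 1_{(0,h)}$; but to stay in the class where your discretization argument applies you need $\phi(0)=0$, and no continuous $\phi$ with $\phi(0)=0$ can dominate $1_{(0,h)}$ near $0$. If instead you take $\phi(0)=1$, then $\frac{1}{n}\sum_i\phi(g_{k,i}^n(s))$ (which is what the discretization argument actually controls) picks up a contribution of $\phi(0)=1$ from \emph{every} job whose deadline has already expired by time $s$; the fluid limit of this quantity is of order $\lambda_k\int_0^s F_k(s-u)\,\der u$, which is $\Theta(1)$ for fixed $s>0$, not $O(h)$. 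So the asserted fluid limit of the smoothed test-integral is wrong by a macroscopic amount, and the estimate does not give the required uniform-in-$(s,n)$ bound on the mass near zero.

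This is precisely the difficulty that Lemma \ref{lem:BndReg1} in the paper is designed to overcome, and it does so with a different device: translate from residual deadlines to \emph{original} deadlines via
\[
\CRbn_k(t)(I_x^{\kappa})\le\frac{1}{n}\sum_{i=1}^{E^n_k(t)}1_{I^\kappa_{\,x+t-t^n_{k,i+Z^n_k(0)}}}(g^n_{k,i}),
\]
so the window now sits away from zero in the space of raw deadlines and the expired-deadline atoms never enter. Then one discretizes the arrival-time axis, applies Corollary \ref{cor:deadlineLimit} block by block (treating the far-past blocks separately by the tail/uniform-integrability bound), and closes the argument with the observation that the resulting windows $(j\kappa,(j+4)\kappa]$ overlap at most four at a time, so that $\sum_j[F_k((j+4)\kappa)-F_k(j\kappa)]\le 4$. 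You would need some device of this kind; the smoothed-indicator idea as stated does not work. With that single repair, your proof would be a correct variant of the paper's argument.
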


In order to prove Lemma~\ref{lem:ResDeadFLLN}, we must first prove that
$\{ \bar{\mathcal R}^n(\cdot)\}_{n\in\N}$ is tight. This is done using
the same
general approach as that outlined for proving tightness of $\{\bzn
(\cdot
)\}_{n\in\N}$.
To illustrate similarity of proof techniques, we consecutively execute each
step for proving tightness of both processes in Sections~\ref{sec:cc}
through~\ref{sec:osc}
below. Then in Section~\ref{sec:char},
we complete the proof of Lemma~\ref{lem:ResDeadFLLN} by uniquely characterizing
the limit points.

There are two specific choices of $\{g_{k, i}^n\}_{i\in\N}$, $1\le
k\le
K$ and $n\in\N$, that will
be of particular interest for proving tightness of $\{\bzn(\cdot)\}
_{n\in\N}$ and Theorem~\ref{thrm:flt}.

\noindent\textit{Special Case 1:} For $n\in\N$, $1\le k\le K$, and
$i\in\N
$, let $g_{k, i}^n=d_{k, i}$.
This choice clearly satisfies (AA). Hence, Lemma~\ref{lem:deadFLLN}
implies \eqref{eq:deadFLLN}.
Further, for $n\in\N$, $1\le k\le K$, $i\in\N$, and $t\in\ptime$,
let
%
%
\begin{equation}\label{eq:a}
a_{k, i}^n(t)=\left( d_{k, i} - \left( t - t_{k, Z_k^n(0)+i}^n\right
)^+\right)^+.
\end{equation}
Then, for $n\in\N$ and $t\in\ptime$, set
\[
\CA_k^n(t)=\sum_{i=1}^{E_k^n(t)} \delta_{a_{k, i}^n(t)}^+
\qquad\hbox{and}\qquad
\CAbn_k(t)=\frac{1}{n}\CA^n_k(t).
\]
By Lemma~\ref{lem:ResDeadFLLN}, as $n\to\infty$,
%
%
\begin{equation}\label{eq:A}
\CAbn(\cdot)\Rightarrow{\mathcal R}^*(\cdot).
\end{equation}

\noindent\textit{Special Case 2:} For $n\in\N$, $1\le k\le K$, and
$i\in\N
$, let $g_{k, i}^n=d_{k, i}+v_{k, i}^n$.
This choice satisfies (AA) as well. For $n\in\N$, $1\le k\le K$,
$i\in\N
$, and $t\in\ptime$,
let
%
%
\begin{equation}\label{eq:v}
v_{k, i}^n(t)=\left( d_{k, i}+v_{k, i}^n - \left( t - t_{k,
Z_k^n(0)+i}^n\right)^+\right)^+.
\end{equation}
Then, for $n\in\N$ and $t\in\ptime$, set
\[
\CV_k^n(t)=\sum_{i=1}^{E_k^n(t)} \delta_{v_{k, i}^n(t)}^+
\qquad\hbox{and}\qquad
\CVbn_k(t)=\frac{1}{n}\CV_k^n(t).
\]
By Lemma~\ref{lem:ResDeadFLLN}, as $n\to\infty$,
%
%
\begin{equation}\label{eq:V}
\CVbn(\cdot)\Rightarrow{\mathcal R}^*(\cdot).
\end{equation}
Even though the steps for proving tightness of $\{ \bar{\mathcal
R}^n(\cdot)\}_{n\in\N}$
and $\{\bzn(\cdot)\}_{n\in\N}$ are executed consecutively, it is worth
noting that
Lemma~\ref{lem:ResDeadFLLN} is actually used to prove tightness of
$\{\bzn(\cdot)\}_{n\in\N}$ through the application of \eqref{eq:A}
and~\eqref{eq:V}.

\subsection{Compact Containment}\label{sec:cc}
In this section, we demonstrate the compact containment properties
needed to prove
tightness of $\{\CRbn(\cdot)\}_{n\in\N}$ and of $\{\bzn(\cdot)\}
_{n\in\N}$.
In both cases, we will utilize \cite[Lemma 15.7.5]{ref:K}. The
application turns out to be
simpler for $\{\CRbn(\cdot)\}_{n\in\N}$ since these are measures in~$\M_1^K$.

\paragraph{Compact Containment in $\M_1^K$}
Let $K_m=[0, m]$ and let $K_m^c$ denote its complement.
Then $\{K_m\}_{m\in\N}$ forms a sequence of sets that increases to
$\Rp$.
By \cite[Lemma 15.7.5]{ref:K}, $\K'\subset\M_1^K$ is relatively compact
if and only if there exists a positive constant $\check z$
and a sequence of positive constants $\{b_m\}_{m\in\N}$ tending to zero
such that for each $\zeta\in\K'$
\[
\max_{1\le k\le K} \zeta_k(\Rp) \le\check z
\qquad\mbox{and}\qquad
\max_{1 \le k \le K} \zeta_k(K_m^c) \le b_m, \ \forall\ m \in\N.
\]
Note that for $1\le k\le K$ and $m\in\N$, $\zeta_k(K_m^c)\le\langle\chi, \zeta_k\rangle /m$.
Hence it suffices to show
that there exist positive constants $\check z$ and $\check w$
such that for each $\zeta\in\K'$
%
%
\begin{equation}\label{eq:Kcompact1}
\max_{1\le k\le K} \zeta_k(\Rp) \le\check z
\qquad\mbox{and}\qquad
\max_{1 \le k \le K} \left\langle \chi, \zeta_k\right\rangle \le\check w.
\end{equation}
We will verify \eqref{eq:Kcompact1} to prove Lemma~\ref{lem:cc1}.

\begin{lemma} \label{lem:cc1} Suppose that (AA) holds.
Let $T, \eta>0$.
There exists a compact set $\K\subset\M^K_1$ such that
\[
\liminf_{n \to\infty}
\bP(\CRbn(t) \in\K\mbox{ for all } t \in[0, T]) \ge1- \eta.
\]
%
\end{lemma}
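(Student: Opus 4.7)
The plan is to apply the compact containment criterion distilled into \eqref{eq:Kcompact1}: it suffices to exhibit positive constants $\check z$ and $\check w$, let $\K$ be the set of $\zeta\in\M_1^K$ with $\max_k \zeta_k(\Rp)\le\check z$ and $\max_k \langle\chi,\zeta_k\rangle\le\check w$ (which is automatically compact), and then show that $\bar{\mathcal R}^n(t)\in\K$ for all $t\in[0,T]$ with probability at least $1-\eta$ for large $n$.

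The first step is to bound the total mass. Since $g_{k,i}^n(t)\ge 0$ for every $i$, we have $\bar{\mathcal R}^n_k(t)(\Rp)\le\bar E^n_k(t)\le\bar E^n_k(T)$. The functional law of large numbers \eqref{eq:EFLLN} (which is part of the hypotheses in (AA)) then gives that $\max_{1\le k\le K}\bar E^n_k(T)\Rightarrow\max_{1\le k\le K}\lambda_k T$, so we may choose $\check z=\max_k\lambda_k T+1$ and conclude that $\max_k \bar{\mathcal R}^n_k(t)(\Rp)\le\check z$ for all $t\in[0,T]$ with probability at least $1-\eta/2$, for all $n$ sufficiently large.

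The second step is to bound the first moment. Because $g_{k,i}^n(t)\le g_{k,i}^n$ by construction, one has the pointwise domination $\langle\chi,\bar{\mathcal R}^n_k(t)\rangle\le\langle\chi,\bar{\mathcal G}^n_k(t)\rangle$, and monotonicity in $t$ of $\bar{\mathcal G}^n_k(\cdot)$ further gives $\sup_{t\in[0,T]}\langle\chi,\bar{\mathcal R}^n_k(t)\rangle\le\langle\chi,\bar{\mathcal G}^n_k(T)\rangle$. By Lemma~\ref{lem:deadFLLN}, $\langle\chi,\bar{\mathcal G}^n_k(T)\rangle\Rightarrow\langle\chi,\dead^*_k(T)\rangle=\lambda_k T/\gamma_k$ as $n\to\infty$, so setting $\check w=\max_k\lambda_k T/\gamma_k+1$ gives $\sup_{t\in[0,T]}\max_k\langle\chi,\bar{\mathcal R}^n_k(t)\rangle\le\check w$ with probability at least $1-\eta/2$ for all sufficiently large $n$.

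Combining the two bounds with a union bound delivers the lemma. There is no serious obstacle here: the only content is to notice that the criterion \eqref{eq:Kcompact1} requires only a total-mass bound and a first-moment bound in $\M_1^K$, and that both are handled uniformly in $t\in[0,T]$ by dominating $\bar{\mathcal R}^n_k(t)$ by $\bar{\mathcal G}^n_k(T)$ and applying the elementary FLLNs \eqref{eq:EFLLN} and Lemma~\ref{lem:deadFLLN}. The genuinely delicate compact-containment argument will be the two-dimensional analogue for $\{\bar\z^n(\cdot)\}$, where controlling mass near the coordinate axes requires a version of the near-boundary estimate in Lemma~\ref{prop:NearBoundary}.
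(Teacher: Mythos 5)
Your proof is correct and follows essentially the same route as the paper: dominate $\bar{\mathcal R}^n_k(t)(\Rp)$ by $\bar E^n_k(T)$ and $\langle\chi,\bar{\mathcal R}^n_k(t)\rangle$ by $\langle\chi,\bar{\mathcal G}^n_k(T)\rangle$, invoke \eqref{eq:EFLLN} and Lemma~\ref{lem:deadFLLN} to control these uniformly in $t\in[0,T]$ with high probability, and conclude via the compactness criterion \eqref{eq:Kcompact1}. The only cosmetic difference is your choice of constants ($\max_k\lambda_kT+1$ rather than $2\lambda_+T$, etc.), which is immaterial.
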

\begin{proof} Fix $T, \eta>0$. For all $1\le k\le K$ and $t\in[0, T]$,
%
%
\begin{equation}\label{eq:UBs}
\CRbn_k(t)(\Rp)\le\bar E_k^n(T)
\qquad\hbox{and}\qquad
\left\langle \chi, \CRbn_k(t) \right\rangle \le\left\langle \chi, \CGbn_k(T) \right\rangle .
\end{equation}
Define
\[
\Omega_1^n=\left\{\max_{1\le k\le K} \bar E_k^n(T) \le2\lambda_+
T\right\}
\quad\hbox{and}\quad
\Omega_2^n=\left\{\max_{1\le k\le K} \left\langle \chi, \CGbn_k(T)
\right\rangle \le
2g_+T\right\}.
\]
Set $\Omega_0^n=\Omega_1^n\cap\Omega_2^n$. By \eqref{eq:EincFLLN} and
Lemma~\ref{lem:deadFLLN},
%
%
\begin{equation}\label{eq:HighProb}
\liminf_{n \to\infty}\bP\left( \Omega^n_0 \right)\ge1 - \eta.
\end{equation}
The result follows by combining \eqref{eq:Kcompact1}, \eqref{eq:UBs}
and \eqref{eq:HighProb}.
\end{proof}

\paragraph{Compact Containment in $\M_2^K$}
For each $m \in\N$, let $K_m = [0, m] \times[0, m]$ and let $K_m^c$
denote its complement.
Then $\{K_m\}_{m\in\N}$ forms a sequence of sets that increases to
$\Rp^2$.
By \cite[Lemma 15.7.5]{ref:K}, $\K'\subset\M_2^K$ is relatively compact
if and only if there exists a positive constant $\check z$
and a sequence of positive constants $\{b_m\}_{m\in\N}$ tending to zero
as $m$ tends to infinity
such that for each $\zeta\in\K'$
%
%
\begin{equation}\label{eq:Kcompact}
\max_{1\le k\le K} \zeta_k(\Rp^2) \le\check z
\qquad\mbox{and}\qquad
\max_{1 \le k \le K} \zeta_k(K_m^c) \le b_m, \ \forall\ m \in\N.
\end{equation}
We will verify \eqref{eq:Kcompact} to prove Lemma~\ref{lem:compact
containment}.

\begin{lemma} \label{lem:compact containment}
Let $T, \eta>0$.
There exists a compact set $\K\subset\M^K_2$ such that
\[
\liminf_{n \to\infty}
\bP(\bzn(t) \in\K\mbox{ for all } t \in[0, T]) \ge1- \eta.
\]
%
\end{lemma}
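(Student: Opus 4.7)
My plan is to verify the two Kallenberg compactness conditions in~\eqref{eq:Kcompact} on a single event of probability at least $1-\eta$ for all $n$ sufficiently large. I must produce a constant $\check z$ bounding $\max_k \bzn_k(t)(\Rp^2)$ and a deterministic sequence $b_m\to 0$ bounding $\max_k \bzn_k(t)(K_m^c)$, both uniformly in $t\in[0,T]$. The total-mass bound is immediate from $\bzn_k(t)(\Rp^2)\le \bzn_k(0)(\Rp^2)+\Ebn_k(T)$: assumption (A.3) combined with \eqref{eq:IC} and \eqref{eq:EFLLN} gives tightness of the right-hand side, so I can choose $\check z$ with $\bP\bigl(\max_k[\bzn_k(0)(\Rp^2)+\Ebn_k(T)]\le \check z\bigr)\ge 1-\eta/4$ for all large $n$.

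The substantive work is the tail bound $\bzn_k(t)(K_m^c)\le b_m$. I decompose $K_m^c=((m,\infty)\times\Rp)\cup(\Rp\times(m,\infty))$ and treat each piece separately. For the first piece, the key observation is that the first coordinate of every atom of $\bzn_k(t)$ is bounded above by the workload at the atom's arrival epoch, hence by $W^n_{\sup}:=\sup_{s\in[0,T]} W^n(s)$. Theorem~\ref{thrm:JR} together with \eqref{eq:IC} yields $W^n\Rightarrow W^*$ in $\bD(\ptime,\Rp)$ with $W^*$ almost surely continuous and therefore bounded on $[0,T]$, so $W^n_{\sup}$ is tight. Fix $M>0$ so that $\bP(W^n_{\sup}\le M)\ge 1-\eta/4$ for all $n$ large; on this event $\bzn_k(t)((m,\infty)\times\Rp)=0$ whenever $m>M$.

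For the vertical strip $\Rp\times(m,\infty)$, I split contributions from initial jobs and from jobs arriving during $(0,T]$. The initial contribution is bounded by $\bzn_k(0)(\Rp\times(m,\infty))\le\langle p_2,\bzn_k(0)\rangle/m$, and assumption (A.2) together with \eqref{eq:IC} makes $\langle p_2,\bzn_+(0)\rangle$ tight, so I may choose $C_2$ so that this bound is at most $C_2/m$ on an event of probability at least $1-\eta/4$. For newly arriving class-$k$ jobs, the initial patience is $p_{k,j}^n=d_{k,j-Z_k^n(0)}+v_{k,j-Z_k^n(0)}^n 1_{\{d_{k,j-Z_k^n(0)}>W^n(t_{k,j}^n-)\}}$; when the indicator equals one the job is served, in which case $v_{k,j-Z_k^n(0)}^n\le W^n(t_{k,j}^n)\le M$ on the workload event. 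Consequently $p_{k,j}^n>m$ forces $d_{k,j-Z_k^n(0)}>m-M$, so the newly-arriving contribution is dominated by $\CGbn_k(T)((m-M,\infty))$ taken with $g_{k,i}^n=d_{k,i}$ (Special Case~1). By \eqref{eq:deadFLLN} applied on the continuity set $(m-M,\infty)$ for the continuous distribution $\Gamma_k$, this quantity is close to $\lambda_k T\,\Gamma_k((m-M,\infty))$ with high probability, and the latter tends to zero as $m\to\infty$ since $\Gamma_k$ has finite mean.

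Intersecting these four high-probability events, on a set of probability at least $1-\eta$ I obtain $\max_k\bzn_k(t)(K_m^c)\le b_m$ uniformly in $t\in[0,T]$, where $b_m$ vanishes for $m>M$ in the first coordinate and is of the form $C_2/m+2\lambda_+ T\max_k\Gamma_k((m-M,\infty))+o(1)$ in the second (and is the trivial bound $\check z$ for $m\le M$); this sequence manifestly tends to zero, completing the verification of \eqref{eq:Kcompact}. The main obstacle is the tail control on residual patience times of newly arriving jobs, since individual service times $v_{k,j}^n$ need not be small; the decisive observation that removes this obstacle is that $v_{k,j}^n$ contributes to the patience only when the job is actually served, in which case it is automatically bounded by the arrival workload, so tightness of $W^n_{\sup}$ upgrades to a uniform patience-tail bound controlled by the fixed deadline distributions~$\Gamma_k$.
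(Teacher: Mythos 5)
Your overall architecture matches the paper's: verify the Kallenberg criterion~\eqref{eq:Kcompact} by combining a total-mass bound with a tail bound $\max_k\bzn_k(t)(K_m^c)\le b_m$ on a single high-probability event, and the decisive observation — that on the event $\{\sup_{[0,T]}W^n\le M\}$ every newly arriving job that is served has $v^n_{k,j}\le M$, so its initial patience is at most $d_{k,j-Z_k^n(0)}+M$ and the residual patience tail is controlled by the fixed distributions $\Gamma_k$ — is exactly the key step in the paper. Your route differs in small ways that all work: you get the workload supremum from Theorem~\ref{thrm:JR} rather than the paper's more elementary $W^n(0)+\frac1n\sum_k\sum_i v_{k,i}$ plus FSLLN bound; you handle the $w$-coordinate tail by showing it is exactly zero for $m>M$, which is slicker than the paper's use of the tail-mass sequence $a_m$ from a compact set $\K_0$ containing the initial conditions; and you control the initial patience tail through (A.2) and Markov's inequality rather than through $\K_0$.

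There is one genuine gap, in the step that controls the newly arriving jobs' patience tail. You invoke~\eqref{eq:deadFLLN} ``on the continuity set $(m-M,\infty)$'' to conclude that $\bar{\mathcal D}_k^n(T)((m-M,\infty))$ is close to $\lambda_kT\Gamma_k((m-M,\infty))$ with high probability. But that is a statement for one $m$ at a time, each with its own high-probability event, and the Kallenberg criterion requires a deterministic sequence $b_m\to 0$ such that the bound $\bzn_k(t)(K_m^c)\le b_m$ holds \emph{for all $m$ simultaneously} on a \emph{single} event of probability at least $1-\eta$. Intersecting the countably many per-$m$ events destroys the probability bound, and the ``$+\,o(1)$'' you append to $b_m$ is not a deterministic quantity tending to zero in $m$. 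The repair is exactly what the paper does: from the second component of~\eqref{eq:deadFLLN}, $\langle\chi,\bar{\mathcal D}_k^n(T)\rangle$ is close to $\lambda_kT/\gamma_k$ with high probability, so on the single event $\{\max_k\langle\chi,\bar{\mathcal D}_k^n(T)\rangle\le c\}$ (with $c=2\lambda_+g_+T$, say) Markov's inequality gives $\bar{\mathcal D}_k^n(T)((m-M,\infty))\le c/(m-M)$ for every $m>M$ at once, and $c/(m-M)\to 0$. With this substitution, the argument closes.
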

\begin{proof}
Fix $T, \eta> 0$.
First we identify a sequence of sets $\{\Omega_0^n\}_{n\in\N}$ for which
we will show that for each $n\in\N$ on $\Omega_0^n$, $\bzn(t)$ remains
in a particular relatively compact set for all $t\in[0, T]$.
By \eqref{eq:IC}, there exists a compact set $\K_0$ such that
\[
\liminf_{n\to\infty}\bP(\bzn(0)\in\K_0)\ge1-\frac{\eta}{4}.
\]
Since $\K_0$ is compact, there exists a positive constant $\check z_0$
and a sequence of positive constants $\{ a_m\}_{m\in\N}$
tending to zero as $m$ tends to infinity such that
\[
\K_0\subset\left\{ \zeta\in\M^K_2 :
\max_{1\le k\le K} \zeta_k(\Rp^2) \le\check z_0
\mbox{ and }
\max_{1 \le k \le K} \zeta_k(K_m^c) \le a_m, \ \forall\ m \in\N
\right\}.
\]
For $n\in\N$, let
\[
\Omega^n_1
=
\left\{
\max_{1\le k\le K} \bzn_k(0)(\Rp^2) \le\check z_0
\hbox{ and }
\max_{1 \le k \le K} \bzn_k(0)(K_m^c)\le a_m, \ \forall\ m \in\N
\right\}.
\]
Then
\[
\liminf_{n \to\infty}
\bP\left(\Omega^n_1 \right)
\ge1- \frac{\eta}{4}.
\]
%

For $n\in\N$, let
\[
\Omega^n_2
=
\left\{
\max_{1\le k\le K} \Ebn_k(T)
\le
2\lambda_+ T
\right\}.
\]
By \eqref{eq:EincFLLN},
\[
\liminf_{n \to\infty}
\bP\left( \Omega^n_2 \right)
\ge
1-\frac{\eta}{4}.
\]
Note that, for all $n\in\N$,
\[
\sup_{t\in[0, T]}W^n(t)\le W^n(0) + \frac{1}{n}\sum_{k=1}^K \sum
_{i=1}^{E_k^n(T)} v_{k, i}.
\]
Then, by \eqref{eq:IC} and a functional strong law of large numbers,
there exists a positive
constant $\check{w}$ such that
\[
\liminf_{n\to\infty} \bP\left( \sup_{t\in[0, T]} W^n(t) \le
\check
{w}\right)\ge1-\frac{\eta}{4}.
\]
For $n\in\N$, let
\[
\Omega^n_3
=
\left\{\sup_{t\in[0, T]} W^n(t) \le\check{w} \right\}.
\]
Notice that for each $n, m\in\N$, $1\le k\le K$, and $t \in[0, T]$,
%
%
\begin{equation}\label{eq:TailBnd}
\bar\dead^n_k(t)((m, \infty))
\le
\bar\dead^n_k(T)((m, \infty))
\le
\frac{\left\langle \chi, \bar\dead^n_k(T) \right\rangle }{m}.
\end{equation}
Set $c = 2 \lambda_+g_+ T$.
For each $m \in\N$, let $c_m=c/m$.
Then \eqref{eq:deadFLLN} and \eqref{eq:TailBnd} together imply that
\[
\liminf_{n \to\infty}
\bP\left(
\max_{1 \le k \le K}\sup_{t\in[0, T]}
\bar\dead^n_k(t)((m, \infty))
\le
c_m, \
\forall\ m \in\N
\right)
\ge
1-\frac{\eta}{4}.
\]
For $n\in\N$, let
\[
\Omega^n_4
=
\left\{
\max_{1 \le k \le K}\sup_{t\in[0, T]}
\bar\dead^n_k(t)((m, \infty))
\le
c_m, \ \forall\ m \in\N
\right\}.
\]
Finally, for each $n\in\N$ set
\[
\Omega^n_0 = \Omega^n_1 \cap\Omega^n_2 \cap\Omega^n_3 \cap\Omega^n_4.
\]
It follows that
%
%
\begin{equation} \label{eq:moreNiceOmegas}
\liminf_{n \to\infty}\bP\left( \Omega^n_0 \right)\ge1 - \eta.
\end{equation}

Next we identify the relatively compact set $\K'$.
For this, let $\check z=\check z_0 + 2\lambda_+T$ and for $m\in\N$, let
\[
b_m=
\begin{cases}
\check z, & 1\le m\le\check w, \\
a_m+c_{m-\check w}, & m>\check w.
\end{cases}
\]
Then $\{b_m\}_{m\in\N}$ is a sequence of postive numbers tending
to zero as $m$ tends to infinity. Let
\[
\K'=\left\{\zeta\in\M^K_2:
\max_{1 \le k \le K} \zeta_k(\Rp^2) \le\check z
\mbox{ and }
\max_{1 \le k \le K} \zeta_k(K_m^c) \le b_m, \forall m \in\N
\right\}.
\]
Then, by \eqref{eq:Kcompact}, $\K'$ is relatively compact.

It suffices to show that for each $n\in\N$, on $\Omega^n_0$,
$\bzn(t) \in\K'$ for all $t \in[0, T]$. Fix $n\in\N$.
On $\Omega^n_1 \cap\Omega^n_2$ we have that, for all $1 \le k \le K$
and $t \in[0, T]$,
%
%
\begin{equation}\label{eq:TM}
\bzn_k(t)(\Rp^2)
\le
\bzn_k(0)(\Rp^2) + \bar{E}^n_k(T)
\le
\check z_0 + 2\lambda_+T=\check z.
\end{equation}
Next, by \eqref{eq:dynamics1} under fluid scaling and the fact that for
any $x\in\Rp$ and $m\in\N$,
$(K_m^c)_x\subseteq K_m^c$, we have, for each $m\in\N$, $1 \le k \le
K$, and $t\in[0, T]$,
\begin{eqnarray*}
\bzn_k(t) (K^c_m)
&=&
\frac{1}{n}
\sum_{j = 1}^{A^n_k(t)}
1_{ \left(K^c_m\right)_{t-t_{k, j}^n} }(w_{k, j}^n, p_{k, j}^n) \\
&\le&
\frac{1}{n}
\sum_{j = 1}^{A^n_k(t)}
1_{K^c_m }(w_{k, j}^n, p_{k, j}^n) \\
&\le&
\bzn_k(0)(K_m^c)+
\frac{1}{n}
\sum_{j =Z^n_k(0)+1}^{A^n_k(t)}\left(
1_{\left\{w^n_{k, j} > m\right\}}
+
1_{\left\{p^n_{k, j} > m\right\}}\right).
\end{eqnarray*}
Recall that on $\Omega^n_1$, $\max_{1\le k\le K}\bzn_k(0)(K^c_m) \le
a_m$ for all $m\in\N$.
Further, on $\Omega^n_3$, for $m > \check w$, $1_{\{w^n_{k, j} >m\}}=0$
for each $1\le k\le K$ and $Z^n_k(0)+1\le j \le A^n_k(T)$.
Additionally, on $\Omega^n_3$,
for each $1\le k\le K$ and $Z^n_k(0)+1\le j \le A^n_k(T)$,
\[
p^n_{k, j}\le d_{k, j-Z^n_k(0)} +W^n(t_{k, j}^n)\le d_{k, j-Z^n_k(0)} +
\check{w}.
\]
Hence, on $\Omega^n_3 \cap\Omega^n_4$, for $m>\check w$ and $1\le
k\le K$,
\[
\frac{1}{n}
\sum_{j = Z^n_k(0)+1}^{A^n_k(t)}
1_{\left\{p^n_{k, j} > m\right\}}
\le
\bar\dead^n_k(t)((m-\check w, \infty))
\le
c_{m-\check w}.
\]
Then on $\Omega_1^n\cap\Omega_3^n\cap\Omega_4^n$ for $m>\check w$,
%
%
\begin{equation}\label{eq:TailM}
\bzn_k(t) (K^c_m)\le a_m+c_{m-\check w}.
\end{equation}
By \eqref{eq:TM} and \eqref{eq:TailM}, it follows that on $\Omega^n_0$,
$\bzn_k(t) (K^c_m)\le b_m$ for all $m\in\N$. This together with
\eqref{eq:TM}
implies that on $\Omega^n_0$, $\bzn(t) \in\K'$ for each $t \in[0, T]$.
\end{proof}

\subsection{Asymptotic Regularity}
This section contains results that are preparatory for proving the
oscillation bounds.
For the sequences of fluid scaled residual deadline related processes 
and state descriptors, 
the sudden arrival or departure of a
large amount of mass may result in a large oscillation. The focus here
is showing
that it is very unlikely that large
oscillations take place due to departing mass.

\paragraph{Asymptotic Regularity in $\M_1^K$}
Consider the sequence $\{ \CRbn(\cdot)\}_{n\in\N}$ of fluid scaled
residual deadline related processes.
Given $x\in\Rp$ and $\kappa>0$, let
\[
I_x^{\kappa}=\left( (x-\kappa)^+, x+\kappa\right).
\]
Note that for $t\in\ptime$, $x\in\Rp$, and $\kappa>0$, the mass in
$I_{x}^{\kappa}$ at time $t$ will all depart the system
during the time interval $(t+(x-\kappa)^+, t+x+\kappa)$. This time
interval is small if $\kappa$ is small.
Hence, in order to avoid an abrupt departure of a large amount of mass,
one needs to show that asymptotically
such sets contain arbitrarily small mass. This is stated precisely in
the following lemma.

\begin{lemma}\label{lem:BndReg1} Suppose that (AA) holds.
Let $T, \varepsilon, \eta>0$. Then there exists $\kappa>0$ such that
\[
\liminf_{n\to\infty}\bP\left(
\max_{1 \le k \le K}
\sup_{t\in[0, T]} \sup_{x\in\Rp} \CRbn_k(t)(I_x^{\kappa})\le
\varepsilon
\right)\ge1-\eta.
\]
\end{lemma}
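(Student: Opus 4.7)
The plan is to mirror the deterministic strategy of Lemma~\ref{prop:NearBoundary} in the stochastic setting, using Corollary~\ref{cor:deadlineLimit} in place of the fluid model integral identity~\eqref{fdevoeq2}. The starting point is the observation that a class $k$ job with arrival time $s_i = t^n_{k, Z_k^n(0)+i}$ and approximate patience $g_{k,i}^n$ contributes to $\CRbn_k(t)(I_x^{\kappa})$ exactly when $s_i \le t$ and $g_{k,i}^n \in \bigl((x-\kappa)^+ + (t-s_i),\; x+\kappa + (t-s_i)\bigr)$, an interval of length at most $2\kappa$ that shifts with $(t, s_i)$.

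To obtain uniformity in $t$, I would partition $[0, T]$ by a uniform grid $u_j = j\delta$, $0 \le j \le N = \lceil T/\delta \rceil$. For $t \in [u_j, u_{j+1}]$ and $s_i \in (u_l, u_{l+1}]$ with $l \le j$, the shifted interval is contained in a fixed interval $J_{j,l,x}$ of length at most $2\kappa + 2\delta$ depending only on $(j,l,x)$. This yields the pathwise bound
\[
\sup_{t \in [u_j, u_{j+1}]} \CRbn_k(t)(I_x^{\kappa})
\;\le\; \sum_{l=0}^{j} \bigl( \CGbn_k(u_{l+1}) - \CGbn_k(u_l) \bigr)(J_{j,l,x}).
\]
Since as $l$ varies the intervals $J_{j,l,x}$ translate by only $\delta$ per step, a Fubini computation shows that the deterministic analogue $\lambda_k \sum_l \delta\,\Gamma_k(J_{j,l,x})$ is bounded by $\lambda_k(2\kappa + 3\delta)$ uniformly in $j$ and $x$: each $y \in \Rp$ lies in at most $2\kappa/\delta + 3$ of the $J_{j,l,x}$, and $\Gamma_k(\Rp) \le 1$.

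To upgrade this to uniformity in $x \in \Rp$, I would split at a large $M$. For $x > M$, since any positive residual is dominated by the corresponding initial patience, $\CRbn_k(t)(I_x^{\kappa}) \le \CGbn_k(T)\bigl([x-\kappa, \infty)\bigr)$, and Lemma~\ref{lem:deadFLLN} together with the uniform integrability assumption~\eqref{eq:ui} makes this uniformly small for $M$ large, with probability at least $1 - \eta/2$ for all sufficiently large $n$. For $x \in [0, M]$, I would discretize by $x_m = m\kappa$, $0 \le m \le M^* = \lceil M/\kappa \rceil$, and enlarge each $J_{j,l,x}$ to a single fixed $\tilde J_{j,l,m}$ of length $\le 3\kappa + 2\delta$ that covers all $x \in [x_m, x_{m+1}]$. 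Only finitely many pairs $\bigl((u_l, u_{l+1}], \tilde J_{j,l,m}\bigr)$ then arise, and Corollary~\ref{cor:deadlineLimit} applied over this finite collection replaces $\bigl(\CGbn_k(u_{l+1}) - \CGbn_k(u_l)\bigr)(\tilde J_{j,l,m})$ by $\lambda_k \delta\,\Gamma_k(\tilde J_{j,l,m}) + \varepsilon'$, uniformly. Choosing first $\kappa, \delta$ so that $\lambda_k(3\kappa + 3\delta) < \varepsilon/2$ and then $\varepsilon'$ so that $N M^* \varepsilon' < \varepsilon/2$ completes the estimate.

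The main obstacle is producing a single bound that is uniform over the continuous parameter pair $(t, x)$ from Corollary~\ref{cor:deadlineLimit}, which is a single-interval statement. The Fubini computation is what makes the two-dimensional discretization affordable: although $O(1/\delta)$ time cells contribute to the sum, the intervals $J_{j,l,x}$ overlap so heavily that the cumulative deterministic mass is $O(\kappa + \delta)$, rather than accumulating across time cells to something of order $\Gamma_k(\Rp)$.
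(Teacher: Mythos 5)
Your proposal is correct and follows essentially the same strategy as the paper's proof: discretize time into cells, observe that the $\kappa$-windows around residuals occurring in a given time cell are confined to a finite collection of fixed intervals, invoke Corollary~\ref{cor:deadlineLimit} on that finite collection, control the resulting deterministic sum by an overlap count (your Fubini step, the paper's observation that each point lies in at most four of the intervals $(j\kappa,(j+4)\kappa]$), and handle large $x$ by a tail estimate from Lemma~\ref{lem:deadFLLN} together with uniform integrability. The only cosmetic difference is that the paper sets the time-cell width equal to the spatial window width $\kappa$, which makes the fixed intervals automatically have endpoints at multiples of $\kappa$ and so removes the need for a separate discretization of $x$; your two-parameter version is a bit more verbose but equivalent.
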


\begin{proof}
Fix $T, \varepsilon, \eta> 0$.
Set $\kappa= \frac{\varepsilon}{8 \lambda_+}$ and $M_t = \lceil
t/\kappa\rceil$ for $0< t \le T$.
For each integer $m \ge3$, $n\in\N$, $1\le k\le K$, and $t \in[0, T]$,
\[
\CGbn_k(t)(((m-2) \kappa, \infty))
\le
\CGbn_k(T)(((m-2)\kappa, \infty))
\le
\frac{\left\langle \chi, \CGbn_k(T) \right\rangle }{(m-2) \kappa}.
\]
It follows by Lemma~\ref{lem:deadFLLN} that for some $m \ge3$,
\[
\liminf_{n \to\infty}
\bP\left(
\max_{1 \le k \le K}\sup_{t\in[0, T]}
\CGbn_k(t)(((m - 2) \kappa, \infty))
\le
\varepsilon
\right)
\ge
1-\frac{\eta}{2}.
\]
Fix such an $m$ and for $n\in\N$, let
\[
\Omega^n_1
=
\left\{
\max_{1 \le k \le K}\sup_{t\in[0, T]}
\CGbn_k(t)(((m- 2) \kappa, \infty))
\le
\varepsilon
\right\}.
\]
Also, for $n\in\N$, define
\begin{eqnarray*}
\Omega^n_2
&=&
\left\{
\max_{1 \le k \le K}
\sup_{0\le s\le t\le T}
\max_{0\le j \le m+M_T}
\left|
\CGbn_k(s, t)( (j\kappa, (j+4)\kappa))\right.\right.\\
&&\qquad\left. \left. -
\dead^*_k(s, t)( (j\kappa, (j+4)\kappa))
\right|
\le\frac{\varepsilon}{2 M_T}
\right\}.
\end{eqnarray*}
By Corollary~\ref{cor:deadlineLimit},
\[
\liminf_{n \to\infty}\bP\left(\Omega^n_2\right) \ge1-\frac
{\eta}{2}.
\]
For $n\in\N$, set $\Omega_0^n = \Omega^n_1 \cap\Omega^n_2$. Then, we
have that
\[
\liminf_{n \to\infty}\bP\left(\Omega_0^n \right) \ge1-\eta.
\]

Fix $n\in\N$, $1 \le k \le K, x\in\Rp, $ and $0 < t \le T$.
First, note that for any $1 \le i \le E^n_k(t)$, we have
$t^n_{k, i+Z^n_k(0)} \le t$,
so that $(t-t^n_{k, i+Z^n_k(0)})^+ = t-t^n_{k, i+Z^n_k(0)}$.
It follows that
\[
\CRbn_k(t)(I_x^{\kappa})
=
\frac{1}{n}
\sum_{i=1}^{E^n_k(t)}
1_{I^\kappa_x}(g^n_{k, i}(t)) \\
\le
\frac{1}{n}
\sum_{i=1}^{E^n_k(t)}
1_{I^\kappa_{x+t-t^n_{k, i+Z^n_k(0)}}}(g^n_{k, i}).
\]
Then, for $x\ge m\kappa$ and $1 \le i \le E^n_k(t)$,
\[
\left(x+t-t^n_{k, i+Z^n_k(0)}-\kappa\right)^+\ge x-\kappa\ge
(m-1)\kappa
>(m-2)\kappa.
\]
Hence, on $\Omega_1^n$, for $x\ge m\kappa$,
\[
\CRbn_k(t)(I_x^{\kappa})\le\frac{1}{n}
\sum_{i=1}^{E^n_k(t)}
1_{ ((m-2)\kappa, \infty) }(g^n_{k, i})=\CGbn_k(t)\left
(((m-2)\kappa
, \infty)\right)\le\varepsilon.
\]
Otherwise, $x< m\kappa$. Then
\begin{eqnarray*}
\CRbn_k(t)(I_x^{\kappa})
&\le&
\frac{1}{n}\left(
\sum_{j=0}^{M_t-2}
\sum_{i=E^n_k(j \kappa) +1}^{E^n_k((j+1)\kappa)}
1_{I^\kappa_{x+t-t^n_{k, i+Z^n_k(0)}}}(g^n_{k, i})\right.\\
&&+
\left.\sum_{i=E^n_k((M_t-1) \kappa) +1}^{E^n_k(t)}
1_{I^\kappa_{x+t-t^n_{k, i+Z^n_k(0)}}}(g^n_{k, i})\right).
\end{eqnarray*}
Noting that for $0 \le j \le M_t-1$ and $E^n_k(j\kappa) < i \le
E^n_k((j+1)\kappa\vee t)$ we have that
\[
x+t-\kappa- t^n_{k, i+Z^n_k(0)}
\ge
x+t - (j+2) \kappa
\ge\kappa\left( \left\lfloor\frac{x+t}{\kappa} \right\rfloor-
j -
2\right)
\]
and
\[
x+t +\kappa- t^n_{k, i+Z^n_k(0)}
\le
x+t -( j-1) \kappa
\le\kappa\left( \left\lfloor\frac{x+t}{\kappa} \right\rfloor-
j +
2\right).
\]
Then it follows that
\begin{eqnarray*}
\CRbn_k(t)(I_x^{\kappa})
&\le&
\frac{1}{n}
\sum_{j=0}^{M_t-2}
\sum_{i=E^n_k(j \kappa) +1}^{E^n_k((j+1)\kappa)}
1_{I^{2 \kappa}_{\kappa\left( \left\lfloor\frac{x+t}{\kappa}
\right
\rfloor-j \right)}}(g^n_{k, i})\\
&&+\frac{1}{n}
\sum_{i=E^n_k((M_t-1) \kappa) +1}^{E^n_k(t)}
1_{I^{2 \kappa}_{\kappa\left( \left\lfloor\frac{x+t}{\kappa}
\right
\rfloor-(M_t-1) \right)}}(g^n_{k, i}) \\
&=&
\sum_{j=0}^{M_t-2}
\bar{\mathcal{G}}^n_k(j \kappa, (j+1)\kappa)
\left(I^{2 \kappa}_{\kappa\left( \left\lfloor\frac{x+t}{\kappa}
\right\rfloor-j \right)}\right)\\
&&+ \bar{\mathcal{G}}^n_k((M_t-1)\kappa, t )
\left(I^{2 \kappa}_{\kappa\left( \left\lfloor\frac{x+t}{\kappa}
\right\rfloor-(M_t-1) \right)}\right).
\end{eqnarray*}
Since $x<m\kappa$, for all $1\le j\le M_t-1$, the left end point of
$I^{2 \kappa}_{\kappa(\lfloor\frac{x+t}{\kappa}
\rfloor-j)}$ satisfies
\[
0
\le
\left( \kappa\left( \left\lfloor\frac{x+t}{\kappa} \right
\rfloor-j
\right)-2\kappa\right)^+
\le
x+t < m\kappa+M_t\kappa\le(m+M_T)\kappa.
\]
Also note that $0<t-(M_t-1)\kappa\le t- (t/\kappa-1)\kappa=\kappa$.
Then, on $\Omega^n_2$,
since $x< m \kappa$,
\begin{eqnarray*}
\CRbn_k(t)(I_x^{\kappa})
&\le&
\sum_{j=0}^{M_t-2}
{\mathcal{D}}^*_k(j \kappa, (j+1)\kappa)
\left(I^{2 \kappa}_{\kappa\left( \left\lfloor\frac{x+t}{\kappa}
\right\rfloor-j \right)} \right)\\
&&+ {\mathcal{D}}^*_k((M_t-1)\kappa, t)
\left(I^{2 \kappa}_{\kappa\left( \left\lfloor\frac{x+t}{\kappa}
\right\rfloor-(M_t-1) \right)} \right)
+ \frac{\varepsilon}{2}\\
&\le&
\sum_{j=0}^{M_t-1}
\lambda_k \kappa\left[
F_k \left(\kappa\left( \left\lfloor\frac{x+t}{\kappa} \right
\rfloor
-j + 2 \right)^+\right) \right. \\
&&\left.\qquad\qquad\qquad-
F_k \left(\kappa\left( \left\lfloor\frac{x+t}{\kappa} \right
\rfloor
-j - 2 \right)^+\right)
\right] + \frac{\varepsilon}{2} \\
&\le&
\lambda_k \kappa
\sum_{j=-2}^\infty\left[
F_k((j+4)\kappa) - F_k(j\kappa)
\right]
+\frac{\varepsilon}{2}.
\end{eqnarray*}
Note that $F_k(\cdot)$ is a cumulative distribution function and that
each point in $\Rp$
is included in at most four intervals of the form $(j\kappa
, (j+4)\kappa
]$, $j=-2, -1, 0, 1, 2, \dots$. Thus,
on $\Omega^n_2$, since $x<m\kappa$,
\[
\CRbn_k(t)(I_x^{\kappa}) \le4 \lambda_k \kappa+ \frac{\varepsilon}{2}
\le\varepsilon.
\]
Since, $n\in\N$, $1\le k\le K$, $x\in\Rp$, and $t\in[0, T]$ were chosen
arbitrarily, this concludes the proof.
\end{proof}

\paragraph{Asymptotic Regularity in $\M_2^K$}
We need to prove an analog of Lemma~\ref{lem:BndReg1} for $\{\bzn
(\cdot
)\}_{n\in\N}$.
In particular, we wish to prove the following
prelimit version of Lemma~\ref{prop:NearBoundary}.

\begin{lemma}\label{lem:BndReg}
Let $T, \varepsilon, \eta>0$. Then there exists $\kappa>0$ such that
\[
\liminf_{n\to\infty}\bP\left(
\max_{1 \le k \le K}
\sup_{x\in\Rp^2} \sup_{t\in[0, T]} \bzn_k(t)(C_x^{\kappa})\le
\varepsilon
\right)\ge1-\eta.
\]
\end{lemma}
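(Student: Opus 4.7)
The plan is to transcribe the deterministic argument of Lemma~\ref{prop:NearBoundary} into the stochastic setting, carrying each step out on a high-probability event. Fix $T, \varepsilon, \eta > 0$. Let $W^*(\cdot)$ denote the limit workload fluid model solution of Theorem~\ref{thrm:JR}. Choose deterministic constants $\delta > 0$ small, $M$ with $w_u < M < d_{\max}$, and $c = \sum_{k=1}^K \rho_k G_k(M) > 0$, in such a way that $W^*(\delta) \le M$ with probability at least $1 - \eta/4$ (possible since $W^*(\delta) < d_{\max}$ almost surely by the arguments preceding \eqref{eq:tau}). I would then decompose
$$C_x^\kappa \subseteq V_{x_1}^\kappa \cup H_{x_2}^\kappa,$$
where $V_a^\kappa = \{(w,p) \in \Rp^2 : w \in I_a^\kappa\}$ and $H_b^\kappa = \{(w,p) \in \Rp^2 : p \in I_b^\kappa\}$, and bound $\bzn_k(t)(C_x^\kappa) \le \bzn_k(t)(V_{x_1}^\kappa) + \bzn_k(t)(H_{x_2}^\kappa)$. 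It then suffices to control each strip uniformly in $x \in \Rp^2$ and $t \in [0,T]$ on an event of probability at least $1-\eta$.

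For the horizontal strip, the key observation is that for any post-zero arrival $j > Z_k^n(0)$, the second coordinate of the associated atom in $\bzn_k(t)$ equals either $a_{k,j-Z_k^n(0)}^n(t)$ (if the job abandons) or $v_{k,j-Z_k^n(0)}^n(t)$ (if it is served); these are precisely the residual approximate patience times of Special Cases 1 and 2. Consequently,
$$\bzn_k(t)(H_{x_2}^\kappa) \le \bzn_k(0)(H_{x_2}^\kappa) + \CAbn_k(t)(I_{x_2}^\kappa) + \CVbn_k(t)(I_{x_2}^\kappa).$$
By \eqref{eq:A}, \eqref{eq:V} and two applications of Lemma~\ref{lem:BndReg1}, for $\kappa$ small the last two terms are each $< \varepsilon/8$ uniformly in $x_2$ and $t$ with high probability for large $n$. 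The initial contribution is handled by combining \eqref{eq:IC} and (A.1) with \eqref{eq:InitialCorner} applied to $\z_0^*$: a Skorohod representation, atomlessness of the projections $\pi_i(\z_{0,+}^*)$, and Polya-type uniform convergence of one-dimensional distribution functions yield a deterministic $\kappa > 0$ giving the required uniform bound with high probability. The initial vertical contribution $\bzn_k(0)(V_{x_1}^\kappa)$ is treated symmetrically.

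The vertical strip for post-zero arrivals is the delicate piece. For $j > Z_k^n(0)$, the first coordinate of the atom at time $t \ge t_{k,j}^n$ is $(W^n(t_{k,j}^n) - (t - t_{k,j}^n))^+$, so it lies in $I_{x_1}^\kappa$ only when $W^n(t_{k,j}^n) + t_{k,j}^n$ falls in a window of length at most $2\kappa$. Using \eqref{eq:WorkRep}, \eqref{eq:Xn} and \eqref{eq:JR}, I would show that for large $n$, on an event of probability at least $1 - \eta/3$,
$$W^n(s) \le M \text{ for all } s \in [\delta, T], \quad W^n(s_2) + s_2 - W^n(s_1) - s_1 \ge \frac{c}{2}(s_2 - s_1) \text{ for all } \delta \le s_1 < s_2 \le T.$$
Consequently, the set of $s \in [0,T]$ at which a class $k$ arrival can contribute to $V_{x_1}^\kappa$ has Lebesgue measure at most $4\kappa/c + \delta$, with the $\delta$-term absorbing the initial segment $[0, \delta]$ where the slope bound is not asserted. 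Using \eqref{eq:EincFLLN} to bound the number of class $k$ arrivals in this short set by $2\lambda_k(4\kappa/c + \delta)$ with high probability, and choosing $\delta$ of order $\varepsilon/\lambda_+$ and then $\kappa$ sufficiently small, yields $\bzn_k(t)(V_{x_1}^\kappa) < 3\varepsilon/4$ uniformly in $x_1$ and $t$.

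The main obstacle is the uniform slope bound on $W^n(\cdot) + \iota(\cdot)$ in the display above, the stochastic counterpart of the deterministic inequality driving the bound on $\Delta(\kappa)$ in Lemma~\ref{prop:NearBoundary}. This requires controlling the idle time $I^n(\cdot)$ and the error term $X^n(\cdot)$ from \eqref{eq:WorkRep} uniformly over all subintervals of $[\delta, T]$ rather than merely pointwise. Once this uniform control is in hand, the remaining steps are direct stochastic analogues of the deterministic manipulations used to derive \eqref{eq:Delta} in Lemma~\ref{prop:NearBoundary}.
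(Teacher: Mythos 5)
Your overall strategy matches the paper's: decompose $C_x^\kappa$ into vertical and horizontal strips, handle the horizontal strip and the initial condition via the residual deadline related processes (Special Cases 1 and 2) together with Lemma~\ref{lem:BndReg0}, and handle the vertical strip by showing the window of arrival times mapping into $I_{x_1}^\kappa$ is short. The decomposition, the role of $\CAbn$ and $\CVbn$, the choice of $M$ with $w_u<M<d_{\max}$, the constant $c=\sum_k \rho_k G_k(M)$, and the reservation of an initial segment $[0,\delta]$ all appear in the paper in essentially the form you propose. (One minor slip: the initial-condition term should be $\bzn_k(0)$ applied to the $t$-shift of the strip, not the strip itself, but since Lemma~\ref{lem:BndReg0} delivers a bound uniform over all shifts this doesn't affect anything.)

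The genuine issue is the intermediate claim you flag as the ``main obstacle.'' You propose to establish
\[
W^n(s_2)+s_2-W^n(s_1)-s_1 \ge \frac{c}{2}(s_2-s_1) \quad\text{for all } \delta\le s_1<s_2\le T,
\]
and correctly observe that this would require controlling $X^n$ uniformly over all subintervals. But this is a Lipschitz-type bound on $X^n$, and \eqref{eq:Xn} only gives $\sup_{t\le T}|X^n(t)|\to 0$; it does not control increments over short intervals relative to their length. For $s_2-s_1$ of order $\kappa/c$ the displayed bound can fail. The paper avoids this entirely by introducing the prelimit hitting time $\tau^n(s)=\inf\{u\ge 0: W^n(u)+u\ge s\}$ and settling for the weaker, provable estimate: on the event $\{\sup_{t\le T}|X^n(t)|\le\kappa/2\}\cap\{W^n(u)\le M,\ u\in[\delta,T]\}$, expanding with \eqref{eq:WorkRep} and using that $I^n(\cdot)$ is nondecreasing gives
\[
W^n(s_2)+s_2-W^n(s_1)-s_1 \ge c(s_2-s_1)-\kappa
\]
for $\delta\le s_1\le s_2\le T$. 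The additive error $\kappa$ is harmless: combined with the requirement that $W^n(\cdot)+\iota(\cdot)$ traverse a window of length $\le 2\kappa$, it yields $s_2-s_1\le 3\kappa/c$, which is exactly the kind of Lebesgue-measure bound your argument needs. So the obstacle you identify dissolves once you accept the additive error rather than insisting on a multiplicative slope bound; the paper never proves (and does not need) the Lipschitz-type control you set out to establish.
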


Before proving Lemma~\ref{lem:BndReg}, we verify the following
regularity result for the initial state,
which is the stochastic analog of \eqref{eq:InitialCorner}.

\begin{lemma}\label{lem:BndReg0} Let $\varepsilon, \eta>0$. Then there
exists $\kappa>0$ such that
\[
\liminf_{n\to\infty}\bP\left(
\max_{1\le k\le K}
\sup_{x\in\Rp^2} \bzn_k(0)(C_x^{\kappa})\le\varepsilon\right
)\ge1-\eta.
\]
\end{lemma}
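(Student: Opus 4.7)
The plan is to derive this prelimit bound from the deterministic bound \eqref{eq:InitialCorner} applied to the almost sure limit $\z_0^*$, using the weak convergence $\bzn(0) \Rightarrow \z_0^*$ from hypothesis \eqref{eq:IC}. Since the functional $\zeta \mapsto \sup_x \zeta_k(C_x^\kappa)$ is not continuous on $\M_2$ in the weak topology, I cannot appeal to the continuous mapping theorem directly; instead I reduce to one-dimensional projections, exactly as in the derivation of \eqref{eq:InitialCorner}.

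First, I use that
\[
C_x^{\kappa} \subset \Rp \times \left((x_2-\kappa)^+,\, x_2+\kappa\right) \;\cup\; \left((x_1-\kappa)^+,\, x_1+\kappa\right) \times \Rp,
\]
which gives, in the notation of \eqref{def:pi} and with $I_y^\kappa = ((y-\kappa)^+, y+\kappa)$ as in Lemma~\ref{lem:BndReg1},
\[
\bzn_k(0)(C_x^{\kappa}) \;\le\; \pi_1(\bzn_k(0))(I_{x_1}^{\kappa}) + \pi_2(\bzn_k(0))(I_{x_2}^{\kappa}).
\]
It therefore suffices to produce, for each $i \in \{1,2\}$ and $1 \le k \le K$, a deterministic $\kappa > 0$ with
\[
\liminf_{n\to\infty} \bP\!\left(\sup_{y\ge 0}\pi_i(\bzn_k(0))(I_y^\kappa) \le \varepsilon/2\right) \ge 1 - \eta/(4K).
\]

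Second, I upgrade weak convergence to uniform convergence of the projected distribution functions. The projection maps are continuous on $\M_2$, so $\pi_i(\bzn_k(0)) \Rightarrow \pi_i(\z^*_{0,k})$ in $\M_1$; by the Skorohod representation theorem I may assume almost sure convergence on a common probability space. Because $\z_0^* \in \I$ almost surely and $\z^*_{0,k} \le \z^*_{0,+}$, the limit $\pi_i(\z^*_{0,k})$ is a.s.\ atomless with finite total mass, so its distribution function $F^*(y) := \pi_i(\z^*_{0,k})([0,y])$ is a.s.\ bounded, nondecreasing, and continuous on $\Rp$. The prelimit distribution functions $F^n(y) := \pi_i(\bzn_k(0))([0,y])$ converge pointwise a.s.\ to $F^*$ at every continuity point of $F^*$, with total masses matching, $F^n(\infty) = \bzn_k(0)(\Rp^2) \to \z^*_{0,k}(\Rp^2) = F^*(\infty)$ a.s.; P\'olya's theorem (pointwise convergence at all continuity points of a continuous limit, with matching endpoints, upgrades to uniform convergence) then yields $\|F^n - F^*\|_\infty \to 0$ a.s.

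Third, I control the modulus of continuity $\omega_G(\kappa) := \sup_{y\ge 0}(G(y+\kappa) - G((y-\kappa)^+))$ of a bounded nondecreasing function $G$. Since
\[
\sup_{y\ge 0}\pi_i(\bzn_k(0))(I_y^\kappa) \;=\; \omega_{F^n}(\kappa) \;\le\; \omega_{F^*}(\kappa) + 2\|F^n - F^*\|_\infty,
\]
and $\omega_{F^*}(\kappa) \to 0$ a.s.\ as $\kappa \to 0$ (because $F^*$ is a.s.\ continuous with finite total mass), I choose a deterministic $\kappa > 0$ small enough that $\bP(\omega_{F^*}(\kappa) \le \varepsilon/4) \ge 1 - \eta/(8K)$ for every $(i,k)$. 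Combined with $\|F^n - F^*\|_\infty \to 0$ a.s.\ and a union bound over the $2K$ choices of $(i,k)$, this gives the required estimate. The principal technical obstacle is the P\'olya-type upgrade from pointwise (a.s.) convergence of the projected CDFs to uniform convergence; once this is in hand, the uniformity in $x$ is handled at the prelimit level exactly as in the derivation of \eqref{eq:InitialCorner} for the limiting measure.
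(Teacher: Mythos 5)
Your proof is correct and follows the same overall strategy as the paper: bound the corner mass by one-dimensional projections and exploit that the projections of the limiting measure are a.s.\ atomless. The execution, however, is genuinely different and self-contained. The paper's proof defers to the argument given in \cite[Pages 835--836]{ref:GPW} and spends its effort verifying $K$-dimensional analogs of the four hypotheses (3.19)--(3.22) of that argument: joint weak convergence together with first moments (from \eqref{eq:IC}), bounded expected total mass (from (A.3)), bounded expected first moment (from (A.2)), and an a.s.\ atomless limit (from (A.1)). You instead invoke the Skorohod representation and a P\'olya-type lemma to upgrade a.s.\ weak convergence of the projected measures to a.s.\ uniform convergence of their distribution functions, then bound the modulus of continuity via $\omega_{F^n}(\kappa)\le\omega_{F^*}(\kappa)+2\Vert F^n-F^*\Vert_\infty$. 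A notable fine point: your route does not use (A.2), because P\'olya's theorem for bounded nondecreasing functions needs only convergence of total masses (which comes for free from weak convergence of finite measures), not first-moment tightness; the paper, by contrast, verifies the analog of (3.21) precisely from (A.2). Your approach is therefore slightly more economical in its use of hypotheses at this step, while the paper's lets it reuse an already-established result rather than reprove it. One small imprecision to fix if you write this up: $\sup_{y\ge 0}\pi_i(\bzn_k(0))(I_y^\kappa)$ is bounded above by $\omega_{F^n}(\kappa)$ rather than equal to it (since $I_y^\kappa$ is open), but the inequality is all that is needed.
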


\begin{proof} Fix $\varepsilon, \eta>0$. Given $i=1, 2$, recall
definition \eqref{def:pi} of the projection
mapping $\pi_i:\M_2\to\M_1$. We apply the argument given in
\cite[Pages 835--836]{ref:GPW} for measures in $\M_1$ to the
projection mappings
applied to $\bzn_+(0)$, $n\in\N$, to verify that there exists
$\kappa>0$
such that
%
%
\begin{equation}\label{eq:d1}
\liminf_{n\to\infty}
\bP\left( \max_{i=1, 2}\sup_{x\in\Rp} \left\langle 1_{[(x-\kappa)^+,
x+\kappa]},
\pi_i\left({\bzn_+(0)}\right)\right\rangle
<
\frac{\varepsilon}{2}\right)\ge1-\eta.
\end{equation}
The desired result follows from \eqref{eq:d1} since for all $n\in\N$,
$1\le k\le K$, $x=(x_1, x_2)\in\Rp^2$ and
$\kappa>0$,
\[
\bzn_k(0)(C_x^{\kappa})
\le
\left\langle 1_{[(x_1-\kappa)^+, x_1+\kappa]}, \pi_1\left(\bzn
_+(0)\right
)\right\rangle
+
\left\langle 1_{[(x_2-\kappa)^+, x_2+\kappa]}, \pi_2\left(\bzn
_+(0)\right
)\right\rangle .
\]

In order to verify \eqref{eq:d1}, we must verify that suitable
$K$-dimensional analogs of
\cite[(3.19)--(3.22)]{ref:GPW} hold. For this, for $\zeta\in\M_2^K$
and $i=1, 2$ we adopt
the shorthand notation
\[
\pi_i(\zeta)=\left( \pi_i(\zeta_1), \dots, \pi_i(\zeta_K)\right
)\hbox{
and }
\left\langle \chi, \pi_i(\zeta)\right\rangle = \left(\left\langle \chi, \pi_i(\zeta
_1)\right\rangle ,
\dots, \left\langle \chi
, \pi_i(\zeta_K)\right\rangle
\right).
\]
Note that for $\nu\in\M_2$ and $i=1, 2$, $\langle \chi, \pi_i(\nu)
\rangle
=\langle \chi
\circ p_i, \nu\rangle =\langle p_i, \nu\rangle $.
Then, by \eqref{eq:IC}, as $n\to\infty$,
\begin{eqnarray*}
&&\left( \pi_1(\bzn(0)), \pi_2(\bzn(0)), \left\langle \chi, \pi_1(\bzn
(0))\right\rangle , \left\langle
\chi, \pi_2(\bzn(0))\right\rangle \right)\\
&&\qquad\Rightarrow
\left( \pi_1(\z_0^*), \pi_1(\z_0^*), \left\langle \chi, \pi_1(\z_0^*)
\right\rangle ,
\left\langle \chi
, \pi_1(\z_0^*) \right\rangle \right),
\end{eqnarray*}
which is the $K$-dimensional analog of \cite[(3.19)]{ref:GPW}. Using
(A.3) we obtain the following $K$-dimensional
analog of \cite[(3.20)]{ref:GPW}:
\[
\max_{1\le k\le K}\max_{i=1, 2} \bE\left[ \left\langle 1, \pi_i(\z
_{0, k}^*)\right\rangle
\right]
\le
\bE\left[ \z_{0, +}^*(\Rp^2) \right]<\infty.
\]
Using (A.2) we obtain the following $K$-dimensional analog of \cite
[(3.21)]{ref:GPW}:
\[
\max_{1\le k\le K}\max_{i=1, 2}\bE\left[\left\langle \chi, \pi_i(\z
_{0, k}^*)\right\rangle
\right]
\le
\bE\left[ \left\langle p_1+p_2, \z_{0, +}^*\right\rangle \right]<\infty.
\]
Using (A.1) (and in particular (I.1)) gives the following
$K$-dimensional analog of \cite[(3.22)]{ref:GPW}:
\[
\bP\left( \max_{1\le k\le K}\max_{i=1, 2}\sup_{x\in\Rp} \left
<1_{\{
x\}},
\pi_i(\z_{0, k}^*)\right\rangle =0\right)=1.\qedhere
\]
\end{proof}

Before moving on to prove Lemma~\ref{lem:BndReg}, we obtain an almost sure
upper bound on the mass in $C_{(x, y)}^\kappa$ in an arbitrary
coordinate of the $n$th
system at time $t$ for each $\kappa>0$, $x, y \in\Rp$, and $t\in
\ptime$.
To this end,
let $n\in\N$, $1\le k\le K$, $t\in\ptime$, $x, y \in\Rp$, and
$\kappa
>0$. The $n$th
system analog of \eqref{eq:dynamics2} for the set $C_{(x, y)}^\kappa$ is
\begin{eqnarray*}
\zn_k(t)(C_{(x, y)}^\kappa)
&\le&
\zn_k(0)\left( \left(C_{(x, y)}^{\kappa}\right)_t\right)
+
\sum_{j=Z^n_k(0)+1}^{A^n_k(t)}
1_{\left(C_{(x, y)}^{\kappa}\right)_{t-t_{k, j}^n}} (w_{k, j}^n,
p_{k, j}^n)
\\
&=&
\zn_k(0)\left( \left(C_{(x, y)}^{\kappa}\right)_t\right)
+
\sum_{j=Z^n_k(0)+1}^{A^n_k(t)}
1_{C_{(x, y)}^{\kappa}} (w_{k, j}^n(t), p_{k, j}^n(t)).
\end{eqnarray*}
But then, since the $(x, y)$-shift of a set followed by the $\kappa
$-enlargement
contains the $\kappa$-enlargement followed by the $(x, y)$-shift,
%
%
\begin{equation}\label{eq:jobsInCorner1}
\zn_k(t)(C_{(x, y)}^{\kappa})
\le
\zn_k(0)\left(C_{(x+t, y+t)}^{\kappa}\right)
+
\sum_{j=Z^n_k(0)+1}^{A^n_k(t)}
1_{ C_{(x, y)}^{\kappa} } (w_{k, j}^n(t), p_{k, j}^n(t)).
\end{equation}
We simplify the summation term by focusing on each coordinate separately.
For this, we classify jobs by those whose residual patience time causes the
associated unit atom to lie in a certain horizontal band and those
whose residual
virtual sojourn time causes the associated unit atom to lie in a certain
vertical band. Then, for each $n\in\N$, $1\le k\le K$, $t\in\ptime$,
$x, y \in\Rp$, and
$\kappa> 0$,
%
%
\begin{eqnarray} \label{eq:jobsInCorner}
\zn_k(t)(C_{(x, y)}^{\kappa})
&\le&
\zn_k(0)(C_{(x+t, y+t)}^{\kappa})
+
\sum_{j=Z^n_k(0)+1}^{A^n_k(t)}
1_{I^\kappa_{x}} (w^n_{k, j}(t))\\
&&+
\sum_{j=Z^n_k(0)+1}^{A^n_k(t)}
1_{I^\kappa_{y}} (p^n_{k, j}(t)).\nonumber
\end{eqnarray}
We are prepared to prove Lemma~\ref{lem:BndReg}. The proof given below
can be
regarded as a stochastic version of the proof of Lemma~\ref{prop:NearBoundary}.

\begin{proof}[Proof of Lemma~\ref{lem:BndReg}]
Fix $\varepsilon$, $\eta>0$ and $T>\frac{\varepsilon}{24 \lambda_+}$.
We begin by defining a sequence $\{\Omega_0^n\}_{n\in\N}$ of events
on which the prelimit processes satisfy properties analogous to those
exhibited by fluid model solutions and used in the proof
of Lemma~\ref{prop:NearBoundary}.
By Lemma~\ref{lem:BndReg0}, there exists $\kappa_0>0$ such that
\[
\liminf_{n\to\infty} \bP\left( \max_{1\le k\le K} \sup_{x, y \in
\Rp}
\bzn_k(0)\left(C_{(x, y)}^{\kappa_0}\right)\le\frac{\varepsilon
}{4}\right)\ge1-\frac{\eta}{6}.
\]
For $n\in\N$, let
\[
\Omega^n_1
=
\left\{\max_{1 \le k \le K}
\sup_{x, y \in\Rp} \bzn_k(0)\left(C_{(x, y)}^{\kappa_0}\right
)\le
\frac
{\varepsilon}{4}\right\}.
\]
By \eqref{eq:A} and \eqref{eq:V}, there exists a $\kappa_1 > 0$ such that
\[
\liminf_{n\to\infty} \bP\left( \max_{1\le k\le K} \sup_{y\in\Rp
} \sup
_{t \in[0, T]}
\bar{\mathcal{A}}^n_k(t)(I_y^{\kappa_1})\le\frac{\varepsilon
}{4}\right
)\ge1-\frac{\eta}{6}
\]
and
\[
\liminf_{n\to\infty} \bP\left( \max_{1\le k\le K} \sup_{y \in
\Rp} \sup
_{t \in[0, T]}
\bar{\mathcal{V}}^n_k(t)(I_y^{\kappa_1})\le\frac{\varepsilon
}{4}\right
)\ge1-\frac{\eta}{6}.
\]
For $n\in\N$, let
\[
\Omega^n_2
=
\left\{\max_{1 \le k \le K}
\sup_{y \in\Rp}
\sup_{t \in[0, T]}
\bar{\mathcal{A}}^n_k(t)(I_y^{\kappa_1})\le\frac{\varepsilon
}{4}\right
\}
\]
and
\[
\Omega^n_3
=
\left\{\max_{1 \le k \le K}
\sup_{y \in\Rp}
\sup_{t \in[0, T]}
\bar{\mathcal{V}}^n_k(t)(I_y^{\kappa_1})\le\frac{\varepsilon
}{4}\right
\}.
\]
For $n\in\N$, let
\begin{eqnarray*}
\Omega^n_4
&=&
\left\{\max_{1\le k\le K} \sup_{0\le s\le t\le T}
\bar{A}^n_k(t) - \bar{A}^n_k(s) \le2 \lambda_+ (t-s)
\right\}.
\end{eqnarray*}
Since for all $n\in\N$, $1\le k\le K$, and $0\le s\le t\le T$, $\bar
{A}^n_k(t) - \bar{A}^n_k(s)=\bar{E}^n_k(t) - \bar{E}^n_k(s)$,
\eqref{eq:EincFLLN} implies that
\[
\liminf_{n\to\infty} \bP\left(\Omega_4^n \right)
\ge1-\frac{\eta}{6}.
\]

Next we identify positive constants $\delta$ and $M$, analogous to the
constants $\delta$
and $M$ defined in the proof of Lemma~\ref{prop:NearBoundary}.
Let $\delta= \frac{\varepsilon}{24 \lambda_+}$. Then $\delta<T$.
In order to define $M$, there are two
cases to consider, based on the nature of the abandonment distributions.

\noindent\textit{Case 1}: First suppose that $d_{\max}< \infty$. Let
$w_{\max}(\cdot)$ denote the maximal
workload fluid model solution, i.e., the workload fluid model solution
such that $w_{\max}(0)=d_{\max}$.
By the relative ordering property of workload fluid model solutions, it
follows that for any workload fluid
model solution $w(\cdot)$, $w(t)\le w_{\max}(t)$ for all $t\in\ptime$.
In particular, for any workload fluid
model solution $w(\cdot)$ for all $t\ge\delta$,
%
%
\begin{equation}\label{eq:AfterDelta}
w(t)\le w_{\max}(\delta).
\end{equation}
Set $M_1=(w_{\max}(\delta)+d_{\max})/2$.
By monotonicity properties of workload fluid model
solutions, $w_u < w_{\max}(\delta)<M_1<d_{\max}$.
It follows from \eqref{eq:IC}, (A.1), Theorem~\ref{thrm:JR},
and \eqref{eq:AfterDelta} that
\[
\bP\left( \sup_{t \in[\delta, T]} W^*(t) < M_1 \right) = 1.
\]

\noindent\textit{Case 2}: Next consider the case $d_{\max}=\infty$. By
\eqref{eq:IC} and (A.1),
there exists $M_2 > w_u$ such that
\[
\bP\left( W^*(0) < M_2 \right) \ge1 - \frac{\eta}{6}.
\]
Since $M_2 > w_u$, \eqref{eq:IC}, (A.1), Theorem~\ref{thrm:JR} and
monotonicity properties of workload fluid model solutions imply that
\[
\bP\left( \sup_{t \in[0, T]} W^*(t) < M_2 \right)
=
\bP\left( W^*(0) < M_2 \right)
\ge
1 - \frac{\eta}{6}.
\]
Set
\[
M=
\begin{cases} M_1, &\hbox{if }d_{\max}<\infty, \\ M_2, &\hbox{if
}d_{\max
}=\infty.
\end{cases}
\]

Now we proceed to bound the prelimt processes by $M$ with probability
asymptotically
close to one. For $n\in\N$, let
\[
\Omega_5^n = \left\{ \sup_{t\in[\delta, T]} W^n(t)<M\right\}.
\]
Note that by \eqref{eq:IC}, (A.1), Theorem~\ref{thrm:JR} and that fact
that workload fluid model
solutions are continuous, the convergence in distribution in \eqref
{eq:JR} takes place with
respect to the topology of uniform convergence on compact sets.
Furthermore, the set
$\{ f\in\bD(\ptime, \Rp) : \sup_{t\in[\delta, T]} f(t)<M\}$ is
open with
respect to this topology.
Then by the Portmanteau theorem
\[
\liminf_{n \to\infty}\bP\left( \Omega_5^n\right)\ge1-\frac
{\eta}{6}.
\]
Set
\[
c=\sum_{k=1}^K\rho_k G_k(M)
\qquad\hbox{and}\qquad
\kappa= \min\left( \kappa_0, \kappa_1, \frac{\varepsilon}{24
\lambda
_+}, \frac{\varepsilon c}{72\lambda_+}\right).
\]
Note that $c>0$ since $M<d_{\max}$, and so $\kappa>0$. Finally, for
$n\in\N$, let
\[
\Omega_6^n=\left\{ \sup_{t\in[0, T]} \left| X^n(t)\right| \le
\frac
{\kappa}{2} \right\}.
\]
Then, by \eqref{eq:Xn},
\[
\liminf_{n \to\infty}\bP\left( \Omega_6^n\right)\ge1-\frac
{\eta}{6}.
\]
For $n\in\N$, set
\[
\Omega^n_0 = \Omega^n_1 \cap\Omega^n_2 \cap\Omega^n_3 \cap\Omega
^n_4\cap\Omega_5^n\cap\Omega_6^n.
\]
Then
%
%
\begin{equation} \label{eq:niceOmegas}
\liminf_{n \to\infty}
\bP(\Omega^n_0) \ge1 - \eta.
\end{equation}

Fix $n\in\N$ such that $1/n\le\varepsilon/12$, $1\le k\le K$, $x, y
\in
\Rp$, and $t\in[0, T]$.
The fluid scaled analog of (\ref{eq:jobsInCorner}) is
%
%
\begin{eqnarray} \label{eq:lem:BndReg1}
\bzn_k(t)\left(C_{(x, y)}^{\kappa}\right)
&\le&
\bzn_k(0)\left(C_{(x+t, y+t)}^{\kappa}\right)
+
\frac{1}{n}
\sum_{j=Z^n_k(0)+1}^{A^n_k(t)}
1_{I^\kappa_{x}} \left(w^n_{k, j}(t)\right) \\
&&+ \nonumber
\frac{1}{n}
\sum_{j=Z^n_k(0)+1}^{A^n_k(t)}
1_{I^\kappa_{y}} \left(p^n_{k, j}(t)\right).
\end{eqnarray}
We will show that on $\Omega_0^n$, the right hand side of \eqref
{eq:lem:BndReg1} is
less than or equal to~$\varepsilon$.
Since $\kappa\le\kappa_0$, we have $C_{(x+t, y+t)}^{\kappa}
\subseteq
C_{(x+t, y+t)}^{\kappa_0}$.
Then, on $\Omega^n_1$,
%
%
\begin{equation} \label{eq:lem:BndReg1.1}
\bzn_k(0)\left(C_{(x+t, y+t)}^{\kappa}\right) \le\frac
{\varepsilon}{4}.
\end{equation}
Also, for all $Z_k^n(0)+1\le j\le A_k^n(t)$, the quantity
$p^n_{k, j}(t)$ is either
$a^n_{k, j-Z^n_k(0)}(t)$ of
$v^n_{k, j-Z^n_k(0)}(t)$ (recall \eqref{eq:a} and \eqref{eq:v}). It
follows that, on
$\Omega^n_2 \cap\Omega^n_3$,
%
%
\begin{equation} \label{eq:lem:BndReg1.3}
\frac{1}{n}
\sum_{j=Z^n_k(0)+1}^{A^n_k(t)}
1_{I_y^\kappa}
\left(p^n_{k, j}(t)\right)
\le
\bar{\mathcal{A}}^n_k(t)\left(I_y^\kappa\right)
+
\bar{\mathcal{V}}^n_k(t)\left(I_y^\kappa\right)
\le
\frac{\varepsilon}{2}.
\end{equation}
Combining \eqref{eq:lem:BndReg1}, \eqref{eq:lem:BndReg1.1}, and
\eqref
{eq:lem:BndReg1.3}, we see that on $\Omega_0^n$,
%
%
\begin{equation} \label{eq:BndReg2}
\bzn_k(t)\left(C_{(x, y)}^{\kappa}\right)
\le
\frac{3\varepsilon}{4}
+
\frac{1}{n}\sum_{j=Z^n_k(0)+1}^{A^n_k(t)}
1_{I^\kappa_x} \left(w^n_{k, j}(t)\right).
\end{equation}

Finally, we bound the second term on the right side of \eqref{eq:BndReg2}.
To this end, define the prelimit version $\tau^n(\cdot)$ of $\tau
(\cdot
)$ as follows:
\[
\tau^n(s)=\inf\left\{u \ge0: W^n(u) + u \ge s\right\}, \qquad s
\in
\ptime.
\]
Notice that for all $s\in\ptime$, $W^n(s) + s\ge s$, so that $\tau^n(s)
\in[0, s]$.
During busy periods, $W^n(\cdot) + \iota(\cdot)$ jumps up exactly when
jobs arrive
that contribute to the workload, and remains constant otherwise. During
idle periods,
$W^n(\cdot) + \iota(\cdot)$ increases at rate one. In particular,
$W^n(\cdot) + \iota(\cdot)$
is right continuous and nondecreasing. Then,
%
%
\begin{eqnarray}
W^n(\tau^n(s))+\tau^n(s) &\ge& s, \qquad\hbox{for }s\in\ptime
, \label
{eq:taunprop1}\\
W^n(\tau^n(s)-)+\tau^n(s)&\le&s, \qquad\hbox{for }s\in
(W^n(0), \infty
).\label{eq:taunprop2}
\end{eqnarray}
The function $\tau^n(\cdot)$ will be used to determine which
arrivals may contribute to the sum in \eqref{eq:BndReg2}.
First we show that
%
%
\begin{eqnarray} \label{lem:BndReg2.1}
\frac{1}{n}
\sum_{j=Z^n_k(0)+1}^{A^n_k(t)}
1_{I^\kappa_x} \left(w^n_{k, j}(t)\right)
&\le&
\bar{A}^n_k\left( \tau^n(x+ \kappa+t) \wedge t \right)\\
&&-
\bar{A}^n_k\left( \tau^n( (x-\kappa)^++t) \wedge t \right)+\frac
{1}{n}.\nonumber
\end{eqnarray}
To see this, let $Z^n_k(0)+1\le j\le A^n_k(t)$. Then $t_{k, j}^n\le t$ and
\[
\left( x-\kappa\right)^+ < w_{k, j}^n(t) < x+\kappa
\qquad\Leftrightarrow\qquad
\left( x-\kappa\right)^+ + t < w_{k, j}^n + t_{k, j}^n < x+\kappa+t.
\]
But $w_{k, j}^n =W^n( t_{k, j}^n)$. Then, since $W^n(\cdot)+\iota
(\cdot)$
is nondecreasing,
$t_{k, j}^n\le t$ and $w_{k, j}^n(t)\in I_{x}^{\kappa}$ imply that
\[
\tau^n\left(\left( x-\kappa\right)^++t\right)\wedge t \le
t_{k, j}^n\le
\tau^n(x+\kappa+t)\wedge t.
\]
Therefore, \eqref{lem:BndReg2.1} holds.

Next, we proceed to bound the right hand side of \eqref{lem:BndReg2.1}
on $\Omega_0^n$.
By \eqref{eq:BndReg2}, \eqref{lem:BndReg2.1}, the definition of
$\Omega_4^n$,
and the fact that $n$ is such that $1/n\le\varepsilon/12$,
it suffices to show that on $\Omega_0^n$,
%
%
\begin{eqnarray}\label{eq:TimeChange}
2\lambda_+\left( \tau^n(x+\kappa+t) \wedge t
-
\tau^n( (x-\kappa)^++t) \wedge t \right)\le\frac{\varepsilon}{6}.
\end{eqnarray}
If $\tau^n( (x-\kappa)^++t)\ge t$, then the left side of \eqref
{eq:TimeChange} is zero,
and so \eqref{eq:TimeChange} holds. Henceforth, we assume that $\tau^n(
(x-\kappa)^++t)< t$.
If $ \tau^n(x+\kappa+t) \wedge t\le\delta$, then \eqref
{eq:TimeChange} holds
since $\delta=\varepsilon/24\lambda_+$, which implies that the left
hand side of
\eqref{eq:TimeChange} is no larger than $\varepsilon/12$.
Otherwise, $\delta< \tau^n(x+\kappa+t) \wedge t$
(so that $\tau^n(x+\kappa+t)>0$ and then $x+\kappa+t>W^n(0)$).
First consider the case where $\delta\le\tau^n( (x-\kappa)^++t)$. Then,
by \eqref{eq:taunprop1}, \eqref{eq:taunprop2}, and the nondecreasing
nature of $W^n(\cdot)+\iota(\cdot)$,
\begin{eqnarray*}
2\kappa
&=&x+\kappa+t-(x-\kappa+t)\ge x+\kappa+t-((x-\kappa)^++t)\\
&\ge& W^n(\tau^n(x+\kappa+t)-)+\tau^n(x+\kappa+t)\\
&&-W^n(\tau^n((x-\kappa)^++t))-\tau^n((x-\kappa)^++t)\\
&\ge& W^n(\tau^n(x+\kappa+t)\wedge t-)+\tau^n(x+\kappa+t)\wedge t\\
&&-W^n(\tau^n((x-\kappa)^++t))-\tau^n((x-\kappa)^++t).
\end{eqnarray*}
Using \eqref{eq:WorkRep} and the nondecreasing nature of the idle time
process, we obtain
\begin{eqnarray*}
2\kappa
&\ge& X^n(\tau^n(x+\kappa+t)\wedge t-)-X^n(\tau^n((x-\kappa
)^++t))\\
&& + \sum_{k=1}^K\rho_k\int_{\tau^n((x-\kappa)^++t)}^{\tau
^n(x+\kappa
+t)\wedge t}G_k(W^n(u))\der u.
\end{eqnarray*}
Since $\delta\le\tau^n( (x-\kappa)^++t)$, on $\Omega_5^n\cap
\Omega_6^n$,
\[
2\kappa\ge-\kappa+ c\left(\tau^n(x+\kappa+t)\wedge t-\tau
^n((x-\kappa
)^++t)\right).
\]
By definition of $\kappa$, $3\kappa/c\le\varepsilon/24\lambda_+$,
which implies that the left side of
\eqref{eq:TimeChange} is no larger than $\varepsilon/12$ on $\Omega
_5^n\cap\Omega_6^n$, and
so \eqref{eq:TimeChange} holds.
The last case to consider is the case $\tau^n( (x-\kappa)^++t)<\delta<
\tau^n(x+ \kappa+t) \wedge t$.
Then, by definition of~$\delta$, on $\Omega_4^n$,
%
%
\begin{eqnarray}\label{eq:deltabetween}
&&2\lambda_+\left( \tau^n(x+ \kappa+t) \wedge t
-
\tau^n( (x-\kappa)^++t) \wedge t \right)\\
&&\qquad\le
\frac{\varepsilon}{12}+2\lambda_+\left( \tau^n(x+\kappa+t) \wedge
t-\delta\right).\nonumber
\end{eqnarray}
By \eqref{eq:taunprop1}, \eqref{eq:taunprop2}, and monotonicity of
$W^n(\cdot)+\iota(\cdot)$,
\begin{eqnarray*}
x-\kappa+t &\le& (x-\kappa)^++t\\
&\le& W^n(\tau^n((x-\kappa)^++t))+\tau^n((x-\kappa)^++t)\\
&\le&
W^n(\delta)+\delta\\
&\le&
W^n(\tau^n(x+\kappa+t)\wedge t-)+\tau^n(x+\kappa+t)\wedge t
\le
x+\kappa+t.
\end{eqnarray*}
This together with \eqref{eq:WorkRep} and the nondecreasing nature of
the idle time process
implies that, on $\Omega_5^n\cap\Omega_6^n$,
\begin{eqnarray*}
2\kappa
&\ge& W^n(\tau^n(x+\kappa+t)\wedge t-)+\tau^n(x+\kappa+t)\wedge t-
W^n(\delta)-\delta\\
&\ge& X^n(\tau^n(x+\kappa+t)\wedge t-)-X^n(\delta) \\
&& + \sum_{k=1}^K\rho_k\int_{\delta}^{\tau^n(x+\kappa+t)\wedge t}
G_k(W^n(u))\der u\\
&\ge& -\kappa+ c\left(\tau^n(x+\kappa+t) \wedge t-\delta\right).
\end{eqnarray*}
Using the definition of $\kappa$ and \eqref{eq:deltabetween} implies
\eqref{eq:TimeChange}.
\end{proof}

\subsection{Oscillation Bounds}\label{sec:osc}
This section establishes the second main ingredient for proving
tightness of the fluid scaled residual deadline related processes and state
descriptors, controlled oscillations (see Lemmas~\ref{lem:osc1} and
\ref
{lem:osc}). Both proofs are similar in spirit. The one for the residual
deadline processes is slightly simpler, so it is presented first. For
this, recall the definition of ${\bf d}_K$ given in \eqref{eq:metric}.
Then the modulus of continuity is defined as follows.

\begin{definition} Let $i=1, 2$. For each $\zeta(\cdot)\in\bD
(\ptime
, \M_i^K)$
and each $T>\delta>0$, define the modulus of continuity on $[0, T]$
by
\[
\bw_T(\zeta(\cdot), \delta)=\sup_{t \in[0, T-\delta]}\sup_{h\in
[0, \delta
]} {\bf d}_K[ \zeta(t+h), \zeta(t)].
\]
\end{definition}

\begin{lemma}\label{lem:osc1} Suppose that (AA) holds.
For all $T>0$ and $\varepsilon, \eta\in(0, 1)$, there exists $\delta
\in(0, T)$
such that
\[
\liminf_{n\to\infty} \bP( \bw_T(\CRbn(\cdot), \delta)\le
\varepsilon
)\ge1-\eta.
\]
\end{lemma}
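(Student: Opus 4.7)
The plan is to bound the Prohorov distance between $\CRbn_k(t+h)$ and $\CRbn_k(t)$ by exploiting the elementary dynamics of the residual deadline related process: as time advances from $t$ to $t+h$, each existing atom translates leftward by exactly $h$, atoms initially within distance $h$ of the origin disappear, and new atoms are contributed one-for-one by arrivals in $(t, t+h]$.

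More precisely, since $g^n_{k,i}(t+h) = g^n_{k,i}(t) - h$ whenever $i \le E^n_k(t)$ and $g^n_{k,i}(t) > h$, the atoms of $\CRbn_k(t)$ located in $(h, \infty)$ correspond bijectively to atoms of $\CRbn_k(t+h)$ shifted left by $h$, while the remaining atoms of $\CRbn_k(t+h)$ come from arrivals in $(t, t+h]$. Consequently, for any closed $B \subset \Rp$ and any $h \in (0, \varepsilon)$,
\[
\CRbn_k(t)(B) \le \CRbn_k(t)\bigl((0, h]\bigr) + \CRbn_k(t+h)(B^\varepsilon),
\]
\[
\CRbn_k(t+h)(B) \le \CRbn_k(t)(B^\varepsilon) + \bar{E}^n_k(t+h) - \bar{E}^n_k(t),
\]
where the first inequality uses $(B \cap (h,\infty)) - h \subseteq B^\varepsilon$ and the second uses $B + h \subseteq B^\varepsilon$.

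To finish, given $T,\varepsilon,\eta>0$, I would apply Lemma~\ref{lem:BndReg1} with parameters $\varepsilon/2$ and $\eta/2$ to obtain $\kappa_1>0$ such that
\[
\liminf_{n\to\infty}\bP\Bigl(\max_{1\le k\le K}\sup_{t \in [0, T]}\sup_{x \in \Rp} \CRbn_k(t)(I_x^{\kappa_1}) \le \varepsilon/2\Bigr) \ge 1 - \eta/2,
\]
and apply \eqref{eq:EincFLLN} to obtain the uniform increment bound $\bar{E}^n_k(t+h) - \bar{E}^n_k(t) \le 2\lambda_+ h$ with probability at least $1 - \eta/2$ in the limit. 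Setting
\[
\delta = \min\bigl(\kappa_1/2,\ \varepsilon/2,\ \varepsilon/(4\lambda_+)\bigr),
\]
yields $(0, h] \subseteq I_0^{\kappa_1}$ and $2\lambda_+ h \le \varepsilon/2$ for every $h \in [0, \delta]$, so on the intersection of the two events the displayed inequalities give $\mathbf{d}[\CRbn_k(t+h), \CRbn_k(t)] \le \varepsilon$ uniformly in $t \in [0, T-\delta]$, $h \in [0, \delta]$, and $k$. This intersection has probability at least $1 - \eta$ in the limit, and hence $\bw_T(\CRbn(\cdot), \delta) \le \varepsilon$ there, as required.

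The main technical step has already been carried out in Lemma~\ref{lem:BndReg1}, which guarantees that very little mass sits near the origin, and hence that very little mass can depart over a short time interval. The remainder of the argument is a deterministic decomposition based on the identity for $g^n_{k,i}(t+h)$ together with the standard functional law of large numbers bound for arrival increments, so no substantial additional difficulty is expected.
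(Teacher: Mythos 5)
Your proof is correct and follows essentially the same route as the paper's: you use the identity $g^n_{k,i}(t+h)=g^n_{k,i}(t)-h$ (valid when the atom survives) to derive the same two Prohorov-type inequalities, bound the near-origin mass by Lemma~\ref{lem:BndReg1}, bound the new arrivals by \eqref{eq:EincFLLN}, and choose $\delta$ accordingly. The only superficial difference is that you split the tolerance into $\varepsilon/2+\varepsilon/2$ while the paper uses a full $\varepsilon$ on each event; both yield $\mathbf{d}\le\varepsilon$.
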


\begin{proof}
Fix $T>0$ and $\varepsilon, \eta\in(0, 1)$.
By \eqref{eq:EFLLN} and Lemma~\ref{lem:BndReg1}, there exists a
$\kappa\in(0, \varepsilon)$ such that for any $\delta\in(0, T)$,
the events
\begin{eqnarray*}
\Omega^n_1
&=&
\left\{ \max_{1\le k \le K}
\sup_{t \in[0, T]} \CRbn_k(t)([0, \kappa]) \le\varepsilon\right\}
, \\
\Omega^n_2
&=&
\left\{\max_{1 \le k \le K}
\sup_{t\in[0, T-\delta]}\sup_{h\in[0, \delta]} \left[\Ebn_k(t+h)
- \Ebn
_k(t)\right] \le2 \lambda_+ \delta\right\}, \\
\Omega^n_0&=&\Omega^n_1 \cap\Omega^n_2,
\end{eqnarray*}
satisfy
%
%
\begin{equation} \label{eq:osc0}
\liminf_{n \to\infty} \bP(\Omega^n_0) \ge1-\eta.
\end{equation}
Fix such a $\kappa$ and set $\delta= \min(\kappa, \frac
{\varepsilon}{2 \lambda_+})$.

Fix $n\in\N$, $1\le k\le K$, and a closed set $B\in\CB_1$.
Let $t\in[0, T-\delta]$, $h\in(0, \delta]$, and $1\le i\le E_k^n(t)$.
If $g_{k, i}^n(t)>h$ or $g_{k, i}^n(t+h)>0$, then
\[
g_{k, i}^n(t)=g_{k, i}^n(t+h)+h.
\]
Then, for $h\in(0, \delta]$, since $h\le\delta\le\kappa
<\varepsilon$,
\begin{eqnarray*}
\CRbn_k(t)(B)&\le& \CRbn_k(t+h)(B^{\varepsilon})+\CRbn
(t)([0, \kappa])\\
\CRbn_k(t+h)(B)&\le& \CRbn_k(t)(B^{\varepsilon})+\Ebn_k(t+h)-\Ebn_k(t).
\end{eqnarray*}
Then, on $\Omega_0^n$, for all $t\in[0, T-\delta]$ and $h\in
[0, \delta]$,
\[
{\bf d}(\CRbn_k(t+h), \CRbn_k(t))<\varepsilon.
\]
This together with the fact that $n\in\N$, $1\le k\le K$, and the
closed set $B\in\CB_2$
were arbitrary and \eqref{eq:osc0} implies the result.
\end{proof}

Next we generalize the preceding argument to prove the following
analogous lemma
for the fluid scaled state descriptors. There are two main
distinctions. One is that
$\kappa<\varepsilon/2$ since in $h>0$ time units a unit atom moves
along a diagonal
path a distance of $\sqrt{2}h$. The other is that \eqref{eq:dynamics2}
has been established.

\begin{lemma}\label{lem:osc}
For all $T>0$ and $\varepsilon, \eta\in(0, 1)$, there exists $\delta
\in(0, T)$
such that
\[
\liminf_{n\to\infty} \bP( \bw_T(\bzn(\cdot), \delta)\le
\varepsilon)\ge
1-\eta.
\]
\end{lemma}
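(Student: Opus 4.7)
The plan is to parallel the proof of Lemma~\ref{lem:osc1}, with the two adjustments needed because atoms in the state descriptor now live in $\Rp^2$ and move diagonally at Euclidean speed $\sqrt 2$, and because atoms can exit the system (rather than merely translating) once a coordinate reaches zero. Throughout, the Prohorov distance requires that both
$\bzn_k(t+h)(B) \le \bzn_k(t)(B^\varepsilon)+\varepsilon$
and
$\bzn_k(t)(B) \le \bzn_k(t+h)(B^\varepsilon)+\varepsilon$
hold simultaneously for every closed $B\in\CB_2$, and I would verify these separately.

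First, given $T>0$ and $\varepsilon,\eta\in(0,1)$, I would combine Lemma~\ref{lem:BndReg} (applied to the fixed corner set $C = C_{(0,0)}$) with the arrival-increment bound \eqref{eq:EincFLLN} to produce a constant $\kappa\in(0,\varepsilon/2)$ and, for each $n$, an event $\Omega^n_0$ with $\liminf_{n\to\infty}\bP(\Omega^n_0)\ge 1-\eta$ on which both
\[
\max_{1\le k\le K}\sup_{t\in[0,T]}\bzn_k(t)(C^\kappa)\le \varepsilon/2
\quad\text{and}\quad
\max_{1\le k\le K}\sup_{0\le s\le t\le T,\,t-s\le\delta}\left(\bar E^n_k(t)-\bar E^n_k(s)\right)\le 2\lambda_+\delta
\]
hold for every $\delta\in(0,T)$. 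I would then set $\delta=\min(\kappa/2,\,\varepsilon/(4\lambda_+))$, which ensures $\sqrt 2 h\le \varepsilon$ and $2\lambda_+h\le\varepsilon/2$ for all $h\in[0,\delta]$, while keeping $h<\kappa$.

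For the first direction, I would invoke the fluid-scaled version of \eqref{eq:dynamics2} to write
\[
\bzn_k(t+h)(B)=\bzn_k(t)(B_h)+\frac{1}{n}\sum_{j=A^n_k(t)+1}^{A^n_k(t+h)}1_{B_{t+h-t^n_{k,j}}}(w^n_{k,j},p^n_{k,j}).
\]
Any $(w,p)\in B_h$ has $(w-h,p-h)\in B$ with Euclidean distance $\sqrt 2 h\le\varepsilon$, so $B_h\subseteq B^\varepsilon$ and the first term is at most $\bzn_k(t)(B^\varepsilon)$; the second term is bounded by $\bar E^n_k(t+h)-\bar E^n_k(t)\le 2\lambda_+\delta\le\varepsilon/2$ on $\Omega^n_0$. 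For the reverse direction, I would split the atoms comprising $\bzn_k(t)$ according to whether $\min(w^n_{k,j}(t),p^n_{k,j}(t))>h$ or not. Atoms in the first class remain in the system at time $t+h$ at translated position $(w^n_{k,j}(t)-h,\,p^n_{k,j}(t)-h)$, which lies within $B^{\sqrt 2 h}\subseteq B^\varepsilon$ whenever the original atom lay in $B$. Atoms in the second class have their time-$t$ positions in the closure of $C^h$, hence in $C^\kappa$ since $h\le\delta<\kappa$, and their total mass is therefore bounded by $\bzn_k(t)(C^\kappa)\le\varepsilon/2$ on $\Omega^n_0$. Adding these contributions gives $\bzn_k(t)(B)\le\bzn_k(t+h)(B^\varepsilon)+\varepsilon/2$.

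Combining the two bounds yields ${\bf d}[\bzn_k(t+h),\bzn_k(t)]\le\varepsilon$ uniformly in $k$, $t$, and $h\le\delta$ on $\Omega^n_0$, so $\bw_T(\bzn(\cdot),\delta)\le\varepsilon$ there, and the lemma follows from $\liminf_n\bP(\Omega^n_0)\ge 1-\eta$. The main delicate point is the geometric bookkeeping around the corner set: identifying that the set of atoms which exit the system during $(t,t+h]$ is exactly the set of atoms whose time-$t$ positions lie in the closure of $C^h$, and that this set is contained in $C^\kappa$ by the choice $\delta<\kappa$. Everything else is an essentially routine two-dimensional adaptation of the argument used for Lemma~\ref{lem:osc1}.
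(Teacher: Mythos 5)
Your proof is correct and follows essentially the same approach as the paper: it combines Lemma~\ref{lem:BndReg} (to control mass near the corner set~$C$) with the fluid-scaled arrival-increment bound, and then handles the two Prohorov inequalities by noting that surviving atoms drift diagonally a distance at most $\sqrt{2}\delta<\varepsilon$ while exiting atoms must already lie within distance $\delta<\kappa$ of~$C$. The only cosmetic difference is that you phrase the reverse direction by partitioning atoms according to whether $\min(w^n_{k,j}(t),p^n_{k,j}(t))>h$, whereas the paper packages the same observation as the set containment $B\subseteq(B^\varepsilon)_h\cup\bar{C}^\kappa$ and then invokes the dynamic equation~\eqref{eq:dynamics2}; both are instances of the identical geometric argument, with your constants ($\varepsilon/2$ bounds, $\delta=\min(\kappa/2,\varepsilon/(4\lambda_+))$) being merely a bit more conservative than the paper's.
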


\begin{proof}
Fix $T>0$ and $\varepsilon, \eta\in(0, 1)$.
For each $\kappa>0$, let $\bar{C}^\kappa$ be the closure of
$C^\kappa
$, or
\[
\bar{C}^\kappa= [0, \kappa] \times\Rp\cup\Rp\times[0, \kappa].
\]
By \eqref{eq:EFLLN} and Lemma~\ref{lem:BndReg}, there exists a
$\kappa\in(0, \varepsilon/2)$ such that for any $\delta\in(0, T)$,
the events
\begin{eqnarray*}
\Omega^n_1
&=&
\left\{ \max_{1\le k \le K}
\sup_{t \in[0, T]} \bzn_k(t)(\bar{C}^\kappa) \le\varepsilon
\right
\}
, \\
\Omega^n_2
&=&
\left\{\max_{1 \le k \le K}
\sup_{t\in[0, T-\delta]}\sup_{h\in[0, \delta]} \left[\Ebn_k(t+h)
- \Ebn
_k(t)\right] \le2 \lambda_+ \delta\right\}, \\
\Omega^n_0&=&\Omega^n_1 \cap\Omega^n_2,
\end{eqnarray*}
satisfy
%
%
\begin{equation} \label{eq:lem:osc0}
\liminf_{n \to\infty} \bP(\Omega^n_0) \ge1-\eta.
\end{equation}
Fix such a $\kappa$ and set $\delta= \min(\kappa, \frac
{\varepsilon}{2 \lambda_+})$.

We begin by noting two basic facts that will be used in the proof.
Firstly, for all $B\in\CB_2$ and $h\in[0, \delta]$,
%
%
\begin{equation} \label{eq:lem:osc1.1}
B \subseteq(B^\varepsilon)_h \cup\bar{C}^\kappa.
\end{equation}
To see this, take some $B\in\CB_2$, $h\in[0, \delta]$, and $(w, p)
\in B
\backslash\bar{C}^\kappa$.
By the construction of $\bar{C}^\kappa$, we have $w, p > \kappa\ge
\delta\ge h$.
Because $h \le\delta\le\kappa< \varepsilon/2$, it follows that
$(w-h, p-h) \in B^\varepsilon$
and $(w, p) \in(B^\varepsilon)_h$. In addition, since $\delta
<\varepsilon/2$, we have that
for all $B\in\CB_2$ and $h\in[0, \delta]$,
%
%
\begin{equation} \label{eq:lem:osc1.2}
B_h \subseteq B^\varepsilon.
\end{equation}

Fix $n\in\N$, $1\le k\le K$, and a closed set $B\in\CB_2$. Let
$\check
B=B\setminus C$.
Note that $\check B\in\CB_{2, 0}$ and by \eqref{eq:zk}, $\bzn
_k(t)(B)=\bzn_k(t)(\check B)$
for all $t\in[0, T]$. Then, we can then conclude from \eqref{eq:zk},
\eqref{eq:dynamics2}, and \eqref{eq:lem:osc1.1} that, on $\Omega^n_0$,
for all
$t\in[0, T-\delta]$ and $h\in[0, \delta]$,
%
%
\begin{eqnarray} \nonumber
\bzn_k(t)(B) &=&\bzn_k(t)(\check B)\\
&\le&\nonumber
\bzn_k(t)((\check B^\varepsilon)_h) + \bzn_k(t)( \bar{C}^\kappa) \\
&\le& \nonumber
\bzn_k(t+h)(\check B^\varepsilon) + \varepsilon\\
&\le& \label{eq:lem:osc1}
\bzn_k(t+h)(B^\varepsilon) + \varepsilon.
\end{eqnarray}
Also, by \eqref{eq:zk}, \eqref{eq:dynamics2}, \eqref{eq:lem:osc1.2},
and the fact that $\delta< \frac{\varepsilon}{2\lambda_+}$,
it is true that on $\Omega^n_0$, for all $t\in[0, T-\delta]$ and
$h\in
[0, \delta]$,
%
%
\begin{eqnarray} \nonumber
\bzn_k(t+h) (B) &=& \bzn_k(t+h) (\check B)\\
&\le& \nonumber
\bzn_k(t)(\check B_h) + \bar{E}^n_k(t+h) - \bar{E}^n_k(t) \\
&\le& \nonumber
\bzn_k(t)(B_h) + \varepsilon\\
&\le& \label{eq:lem:osc2}
\bzn_k(t)(B^\varepsilon) + \varepsilon.
\end{eqnarray}
Because $k$ and $B$ were chosen arbitrarily, \eqref{eq:lem:osc1} and
\eqref{eq:lem:osc2}
imply that on $\Omega_0^n$
\[
\bw_T(\bzn(\cdot), \delta)\le\varepsilon.
\]
The result follows from this and \eqref{eq:lem:osc0}.
\end{proof}

\section{Characterization of Limit Points} \label{sec:char}

The main goal of this section is to prove Theorem~\ref{thrm:flt}.
First we prove Lemma~\ref{lem:ResDeadFLLN},
which is then used in the proof of Theorem~\ref{thrm:flt}.

\begin{proof}[Proof of Lemma~\ref{lem:ResDeadFLLN}]
Throughout, we assume that (AA) holds.
Together Lemmas~\ref{lem:cc1} and~\ref{lem:osc1} imply tightness of
$\{\CRbn(\cdot)\}_{n\in\N}$. Let $\bbM\subset\N$ be a strictly increasing
subsequence tending to infinity and $\CRb(\cdot)$ a process such that
as $m\to\infty$,
\[
\CRbm(\cdot)\Rightarrow\CRb(\cdot).
\]
By Lemma~\ref{lem:BndReg1}, $\CRb(t)$ doesn't charge points for all
$t\in\ptime$ almost surely.
By \eqref{eq:EFLLN} and Lemma~\ref{lem:deadFLLN} and the deterministic
nature of those limiting processes,
as $m\to\infty$,
%
%
\begin{equation}\label{eq:JC1}
\left(\CRbm(\cdot), \bar E^m(\cdot), \bar{\mathcal G}^m(\cdot),
\left\langle
\chi
, \bar{\mathcal G}^m(\cdot)\right\rangle \right)
\Rightarrow
\left(\CRb(\cdot), E^*(\cdot), \CD^*(\cdot), \left\langle \chi, \CD
^*(\cdot
)\right\rangle \right).
\end{equation}
Using the Skorohod representation we may assume without loss of
generally that all
random elements are defined on a common probability space $(\Omega
, {\mathcal F}, \bP)$
such that the joint convergence in \eqref{eq:JC1} is almost sure.
Fix $\omega\in\Omega$ such that $\CRb(t)(\omega)$ doesn't charge points
for all $t\in\ptime$
and as $m\to\infty$,
%
%
\begin{eqnarray}\label{eq:as1}
&&
\left(\CRbm(\cdot)(\omega), \bar E^m(\cdot)(\omega), \bar
{\mathcal
G}^m(\cdot)(\omega), \left\langle \chi, \bar{\mathcal G}^m(\cdot)\right
>(\omega
)\right
)\\
&&\qquad\rightarrow
\left(\CRb(\cdot)(\omega), E^*(\cdot), \CD^*(\cdot), \left\langle \chi
, \CD
^*(\cdot
)\right\rangle \right).\nonumber
\end{eqnarray}
Henceforth, all random variables are evaluated at this $\omega$. It
suffices to show that
$\CRb(\cdot)=\CR^*(\cdot)$.
For this, it suffices to show that for all $1\le k\le K$, $t\in\ptime$
and $x\in\Rp$,
%
%
\begin{equation}\label{eq:R*}
\CRb_k(t)(x, \infty)=\lambda_k\int_0^t G_k(x+t-s)\der s.
\end{equation}
To see this, note that for all $1\le k\le K$, $t\in\ptime$, and $x\in
\Rp$
\begin{eqnarray*}
\CR_k^*(t)(x, \infty)
&=&
\lambda_k\int_x^{\infty} \left[G_k(y)-G_k(y+t)\right]\der y\\
&=&
\lambda_k\int_x^{x+t} G_k(y)\der y
=
\lambda_k\int_0^t G_k(x+t-s)\der s.
\end{eqnarray*}

Next we verify \eqref{eq:R*}. For this, fixed $1\le k\le K$, $t\in
\ptime
$, and $x\in\Rp$.
Given $L\in\N$, let $\kappa=t/L$ and set
\[
t_{\ell} = \ell\kappa\qquad\hbox{and}\qquad x_{\ell}=x+t-t_{\ell
}, \qquad\hbox{for }\ell=0, \dots, L.
\]
Then, given $L\in\N$, $1\le i\le E_k^m(t)$ if and only if there exists
$\ell\in\{0, \dots, L-1\}$ such that
$t_{k, Z_k^m(0)+i}^m\in(t_{\ell}, t_{\ell+1}]$. Further, given $L\in
\N$,
$1\le i\le E_k^m(t)$ and $\ell\in\{0, \dots, L-1\}$
such that $t_{k, Z_k^m(0)+i}^m\in(t_{\ell}, t_{\ell+1}]$, $x_{\ell
}<g_{k, i}^m$ implies that $x<g_{k, i}^m(t)$. Similarly,
given $L\in\N$, $1\le i\le E_k^m(t)$ and $\ell\in\{0, \dots, L-1\}
$ such
that $t_{k, Z_k^m(0)+i}^m\in(t_{\ell}, t_{\ell+1}]$,
$x<g_{k, i}^m(t)$ implies that $x_{\ell+1}<g_{k, i}^m$. Hence, given
$L\in
\N$,
\[
\sum_{\ell=0}^{L-1} \CGbm_k\left(t_{\ell}, t_{\ell+1}\right
)\left
(x_{\ell
}, \infty\right)
\le
\CRbm_k(t)(x, \infty)
\le
\sum_{\ell=0}^{L-1} \CGbm_k\left(t_{\ell}, t_{\ell+1}\right
)\left
(x_{\ell
+1}, \infty\right).
\]
By \eqref{eq:as1}, given $L\in\N$ and $\varepsilon>0$ there exists
$M\in
\N$
such that for all $m\ge M$
\begin{eqnarray*}
\max_{0\le\ell\le L-1} \left|
\CGbm_k\left(t_{\ell}, t_{\ell+1}\right)\left(x_{\ell}, \infty
\right)
-
\CD_k^*\left(t_{\ell}, t_{\ell+1}\right)\left(x_{\ell}, \infty
\right)
\right|
&<&\frac{\varepsilon}{2L}, \\
\max_{0\le\ell\le L-1} \left|
\CGbm_k\left(t_{\ell}, t_{\ell+1}\right)\left(x_{\ell+1}, \infty
\right)
-
\CD^*_k\left(t_{\ell}, t_{\ell+1}\right)\left(x_{\ell+1}, \infty
\right)
\right|
&<&\frac{\varepsilon}{2L}.
\end{eqnarray*}
Hence, given $L\in\N$ and $\varepsilon>0$ there exists $M\in\N$
such that for all $m\ge M$,
\[
\sum_{\ell=0}^{L-1} \CD_k^*\left(t_{\ell}, t_{\ell+1}\right
)\left
(x_{\ell
}, \infty\right)-\frac{\varepsilon}{2}
\le
\CRbm_k(t)(x, \infty)
\le
\sum_{\ell=0}^{L-1} \CD_k^*\left(t_{\ell}, t_{\ell+1}\right
)\left
(x_{\ell
+1}, \infty\right)+\frac{\varepsilon}{2}.
\]
Given $L\in\N$, we have that
\begin{eqnarray*}
\sum_{\ell=0}^{L-1} \CD_k^*\left(t_{\ell}, t_{\ell+1}\right
)\left
(x_{\ell
}, \infty\right)
&=&
\sum_{\ell=0}^{L-1}\lambda_k\kappa G_k\left(x_{\ell}\right)\\
\sum_{\ell=0}^{L-1} \CD_k^*\left(t_{\ell}, t_{\ell+1}\right
)\left
(x_{\ell
+1}, \infty\right)
&=&
\sum_{\ell=0}^{L-1}\lambda_k\kappa G_k\left(x_{\ell+1}\right).
\end{eqnarray*}
Respectively these are upper and lower Riemann sums, and since
$G_k(\cdot)$ is continuous, they both converge to
$\lambda_k\int_0^t G_k(x+t-s)\der s$ as $L\to\infty$. Given
$\varepsilon
>0$, let $\hat L\in\N$ be such that
\[
\left| \sum_{\ell=0}^{\hat L-1}\lambda_k\kappa G_k\left(x_{\ell
+1}\right
)- \sum_{\ell=0}^{\hat L-1}\lambda_k\kappa G_k\left(x_{\ell}\right
)\right|
<\frac{\varepsilon}{2}.
\]
Hence, given $\varepsilon>0$, there exists $\hat M\in\N$ such that for
all $m\ge\hat M$,
\[
\left|\CRbm_k(t)(x, \infty)-\lambda_k\int_0^t G_k(x+t-s)\der
s\right
|<\varepsilon.
\]
Thus, \eqref{eq:R*} holds, as desired.
\end{proof}

Having proved Lemma~\ref{lem:ResDeadFLLN}, we are now ready to prove
Theorem~\ref{thrm:flt}.

\begin{proof}[Proof of Theorem~\ref{thrm:flt}]
Together Lemmas~\ref{lem:compact containment} and~\ref{lem:osc} imply
tightness of
$\{\bzn(\cdot)\}_{n\in\N}$. Let $\bbM\subset\N$ be a strictly
increasing subsequence
tending to infinity and $\z^*(\cdot)$ a process such that as $m\to
\infty$,
%
%
\begin{equation}\label{eq:SubSeq}
\bzm(\cdot)\Rightarrow\z^*(\cdot).
\end{equation}
Note that by (A.1), $\z^*(\cdot)\in\I$ almost surely. Hence, in order
to prove Theorem~\ref{thrm:flt},
it suffices to show that $\z^*(\cdot)$ satisfies \eqref{fdevoeq2}
almost surely. Indeed, once this
is verified, it follows by the uniqueness asserted in Theorem \ref
{thrm:eu} that the law of the limit
point $\z^*(\cdot)$ is unique, and so $\bzn(\cdot)\Rightarrow\z
^*(\cdot
)$ as $n\to\infty$.

By \eqref{eq:IC}, (A.1), and Theorem~\ref{thrm:JR}, as $m\to\infty$,
%
%
\begin{equation}\label{eq:JR2}
W^m(\cdot)\Rightarrow W^*(\cdot),
\end{equation}
where $W^*(\cdot)$ is almost surely a workload fluid model solution
such that $W^*(0)$
is equal in distribution to $W_0^*$.
We would like to argue that this convergence is joint with \eqref{eq:SubSeq}.
By \eqref{eq:IC}, \eqref{eq:Xn}, \eqref{eq:A}, \eqref{eq:V},
and the fact that the limit in \eqref{eq:Xn} and $\CR^*(\cdot)$ are
deterministic,
as $m\to\infty$,
\[
\left(\bzm(\cdot), W^m(0), X^m(\cdot), \bar{\mathcal A}^m(\cdot
), \bar
{\mathcal V}^m(\cdot)\right)
\Rightarrow
\left(\z^*(\cdot), W_0, 0, \CR^*(\cdot), \CR^*(\cdot)\right).
\]
This together with \eqref{eq:CMT}, \eqref{eq:CMT2}, and \eqref{eq:JR2}
implies that
as $m\to\infty$,
%
%
\begin{eqnarray}\label{eq:JointConvergence}
&&\left(\bzm(\cdot), W^m(\cdot), X^m(\cdot), \bar{\mathcal
A}^m(\cdot), \bar
{\mathcal V}^m(\cdot)\right)\\
&&\qquad\Rightarrow
\left(\z^*(\cdot), W^*(\cdot), 0, \CR^*(\cdot), \CR^*(\cdot
)\right
).\nonumber
\end{eqnarray}

Using the Skorohod representation we may assume without loss of
generally that all
random elements are defined on a common probability space $(\Omega
, {\mathcal F}, \bP)$
such that the joint convergence in \eqref{eq:JointConvergence} is
almost sure.
By \eqref{eq:IC}, $\z^*(0)$ satisfies $w_{\z^*(0)}=W^*(0)$, (A.1),
(A.2), and (A.3) almost surely.
Furthermore, by Lemma~\ref{lem:osc}, $\z^*(\cdot)$ is continuous
almost surely.
In addition, by Lemma~\ref{lem:BndReg},
%
%
\begin{equation}\label{eq:NoCornerMass}
\bP\left(\z_+^*(t)(C_x)=0\hbox{ for all }t\in\ptime\hbox{ and
}x\in\Rp
^2\right)=1.
\end{equation}
(cf.\ \cite[Lemma 6.2] {ref:GRZ}).
Fix $\omega\in\Omega$ such that $W^*(\cdot)(\omega)$ is a workload
fluid model solution
and $\z^*(\cdot)(\omega)$ is continuous and
satisfies \eqref{eq:NoCornerMass}, $w_{\z^*(0)(\omega
)}=W^*(0)(\omega
)$, (A.1), (A.2), (A.3)
and, as $m\to\infty$,
\begin{eqnarray*}
&&\left(\bzm(\cdot)(\omega), W^m(\cdot)(\omega), \bar{\mathcal
A}^m(\cdot
)(\omega), \bar{\mathcal V}^m(\cdot)(\omega)\right)\\
&&\qquad\rightarrow\left(\z^*(\cdot)(\omega), W^*(\cdot)(\omega
), \CR
^*(\cdot), \CR^*(\cdot)\right).
\end{eqnarray*}
For all $t\in\ptime$ and $m\in\bbM$, set
\begin{eqnarray*}
\zeta^m(t)=\bzm(t)(\omega)&\qquad\hbox{and}\qquad&
w^m(t)=W^m(t)(\omega
), \\
\zeta(t)=\z^*(t)(\omega)&\qquad\hbox{and}\qquad&
w(t)=W^*(t)(\omega).
\end{eqnarray*}
By \eqref{eq:IC}, $w(0)=w_{\vartheta}$ where $\vartheta=\zeta(0)$.
In order to prove Theorem~\ref{thrm:flt}, it suffices to show that
$\zeta(\cdot)$ is a fluid model solution for the supercritical data
$(\lambda, \mu, \Gamma)$ and initial measure $\vartheta$. In particular,
we must show that $\zeta(\cdot)$ satisfies \eqref{fdevoeq2}. In this
regard, recall that ${\mathcal P}$ is a $\pi$-system (see \eqref
{def:Pi}). As in the proof of Theorem~\ref{thrm:eu} given in Section
\ref{sec:prfthrmeu}, it is enough to show that $\zeta(\cdot)$ satisfies
\eqref{fdevoeq2} for all $B\in{\mathcal P}$.

Fix $B\in{\mathcal P}$. Then $B=[a, \infty)\times[c, \infty)$ for some
$0\le a, c<\infty$.
Fix $t\in\ptime$. In what follows, all random elements are evaluated at
the specific $\omega$
fixed in the preceding paragraph.
Since $\zeta(\cdot)$ is continuous, $\zeta^n(s)\wk\zeta(s)$ for all
$s\in[0, t]$.
By \eqref{eq:NoCornerMass}, $\zeta_+(0)(C_{(a+t, c+t)})=0$ and $\zeta
_+(t)(C_{(a, c)})=0$. Then,
for each $1\le k\le K$, we have
\[
\lim_{m\to\infty}\zeta_k^m(0)(B_t)=\zeta_k(0)(B_t)
\qquad\hbox{and}\qquad
\lim_{m\to\infty}\zeta_k^m(t)(B)=\zeta_k(t)(B).
\]
However, by the fluid scaled version of \eqref{eq:dynamics2} with $h=t$
and $t=0$,
for each $m\in\bbM$ and $1\le k\le K$,
\begin{eqnarray*}
\zeta_k^m(t)(B)-\zeta_k^m(0)(B_t)
&=&
\frac{1}{m}\sum_{j=Z_k^m(0)+1}^{A_k^m(t)} 1_{B_{t-t_{k, j}^m}}\left
(w_{k, j}^m, p_{k, j}^m\right)\\
&=&
\frac{1}{m}\int_0^{t} 1_{B_{t-s}}\left(w^m(s), p_{k,
A_k^m(s)}^m\right)
\der E_k^m(s).
\end{eqnarray*}
Hence, in order to verify that \eqref{fdevoeq2} holds, it suffices to
show that for each $1\le k\le K$,
\[
\lim_{m\to\infty}\frac{1}{m}\int_0^{t} 1_{B_{t-s}}\left
(w^m(s), p_{k, A_k^m(s)}^m\right)
\der E_k^m(s)
= \lambda_k\int_0^t\left( \delta_{w(s)}^+\times\Gamma_k\right
)(B_{t-s})\der s.
\]
Note that for $1\le k\le K$,
\begin{eqnarray*}
\lambda_k\int_0^t\left( \delta_{w(s)}^+\times\Gamma_k\right
)(B_{t-s})\der s
&=& \lambda_k
\int_0^t
1_{\{w(s) \ge a + t-s\}} G_k(c+t-s)\der s \\
&=& \lambda_k
\int_{\tau(a+t)\wedge t}^t
G_k(c+t-s)\der s \\
&=& \lambda_k
\int_c^{c+t-\tau(a+t)\wedge t}
G_k(u)\der u.
\end{eqnarray*}
Hence, in order to verify that \eqref{fdevoeq2} holds, it suffices to
show that for each $1\le k\le K$,
%
%
\begin{eqnarray}\label{eq:MainPart}
&& \lim_{m\to\infty}\frac{1}{m}\int_0^{t} 1_{B_{t-s}}\left
(w^m(s), p_{k, A_k^m(s)}^m\right) \der E_k^m(s)\\
&&\qquad=\lambda_k
\int_c^{c+t-\tau(a+t)\wedge t}
G_k(u)\der u.\nonumber
\end{eqnarray}

In order to verify \eqref{eq:MainPart}, fix $1\le k\le K$.
Note that for all $m\in\bbM$ and $1\le i \le E_k^m(t)$,
\[
d_{k, i}
\le
p_{k, Z_k^m(0)+i}^m
\le
d_{k, i}+v_{k, i}^m.
\]
Then
\begin{eqnarray*}
&&\int_0^{t} 1_{B_{t-s}}\left(w^m(s), d_{k, E_k^m(s)}\right) \der
E_k^m(s)\\
&&\qquad\le
\int_0^{t} 1_{B_{t-s}}\left(w^m(s), p_{k, A_k^m(s)}^m\right) \der
E_k^m(s)\\
&&\qquad\le
\int_0^{t} 1_{B_{t-s}}\left
(w^m(s), d_{k, E_k^m(s)}+v_{k, E_k^m(s)}^m\right
) \der E_k^m(s).
\end{eqnarray*}
Let $0<\delta<t$. Take $M$ such that for all $m\ge M$, $\sup_{0\le
s\le t}|w^m(s)-w(s)|<\delta$.
Then, for all $m\ge M$,
%
%
\begin{eqnarray}
&&\int_0^{t} 1_{B_{t-s}}\left(w(s)-\delta, d_{k, E_k^m(s)}\right)
\der
E_k^m(s)\nonumber\\
&&\qquad\le
\int_0^{t} 1_{B_{t-s}}\left(w^m(s), p_{k, A_k^m(s)}^m\right) \der
E_k^m(s)\label{eq:center}\\
&&\qquad\le
\int_0^{t} 1_{B_{t-s}}\left(w(s)+\delta
, d_{k, E_k^m(s)}+v_{k, E_k^m(s)}^m\right) \der E_k^m(s).\nonumber
\end{eqnarray}
For $s\in\ptime$, $w(s)-\delta\ge a+t-s$ if and only if $s\ge\tau
(a+t+\delta)$. Then
\begin{eqnarray*}
&&\int_0^{t} 1_{B_{t-s}}\left(w(s)-\delta, d_{k, E_k^m(s)}\right)
\der
E_k^m(s)\\
&&\quad=
\int_{\tau(a+t+\delta)\wedge t} ^{t} 1_{[c+t-s, \infty)}\left
(d_{k, E_k^m(s)}\right) \der E_k^m(s)\\
&&\quad=
\int_{\tau(a+t+\delta)\wedge t} ^{t} 1_{[c, \infty)}\left(d_{k, E_k^m(s)}
- (t-s)\right) \der E_k^m(s)\\
&&\quad=
\int_{0} ^{t} 1_{[c, \infty)}\left(a_{k, E_k^m(s)}^m(t) \right)
\der
E_k^m(s)\\
&&\qquad-
\int_0^{\tau(a+t+\delta)\wedge t} 1_{[c, \infty)}\left
(a_{k, E_k^m(s)}^m(t)\right) \der E_k^m(s)\\
&&\quad=
{\mathcal A}_k^m(t)([c, \infty))
-{\mathcal A}_k^m(\tau(a+t+\delta)\wedge t)([c+t-\tau(a+t+\delta
)\wedge
t, \infty)).
\end{eqnarray*}
Similarly,
\begin{eqnarray*}
&&\int_0^{t} 1_{B_{t-s}}\left(w(s)+\delta
, d_{k, E_k^m(s)}+v_{k, E_k^m(s)}^m\right) \der E_k^m(s)\\
&&\quad=
{\mathcal V}_k^m(t)([c, \infty))
-{\mathcal V}_k^m(\tau(a+t-\delta)\wedge t)([c+t-\tau(a+t-\delta
)\wedge
t, \infty)).
\end{eqnarray*}
Then, by \eqref{eq:A} and \eqref{eq:center},
\begin{eqnarray*}
&&\liminf_{m\to\infty}\frac{1}{m}\int_0^{t} 1_{B_{t-s}}\left
(w^m(s), p_{k, A_k^m(s)}^m\right) \der E_k^m(s)\\
&&\quad\ge
\CR_k^*(t)([c, \infty))-\CR_k^*(\tau(a+t+\delta)\wedge
t)([c+t-\tau
(a+t+\delta)\wedge t, \infty)).
\end{eqnarray*}
But,
\begin{eqnarray*}
&&\CR_k^*(t)([c, \infty))
-
\CR_k^*(\tau(a+t+\delta)\wedge t )([c+t-\tau(a+t+\delta)\wedge
t, \infty
))\\
&&\quad= \lambda_k \int_c^{\infty}\left( G_k(u)-G_k(u+t)\right
)\der
u\\
&&\qquad-\lambda_k \int_{ c+t-\tau(a+t+\delta)\wedge
t}^{\infty
}\left( G_k(u)-G_k(u+\tau(a+t)\wedge t )\right)\der u\\
&& \quad= \lambda_k
\int_c^{c+t-\tau(a+t+\delta)\wedge t}
G_k(u)\der u.
\end{eqnarray*}
So then, since $\delta\in(0, t)$ is arbitrary, letting $\delta
\searrow
0$ and using continuity of $\tau(\cdot)$
yields that
\[
\liminf_{m\to\infty}\frac{1}{m}\int_0^{t} 1_{B_{t-s}}\left
(w^m(s), p_{k, A_k^m(s)}^m\right) \der E_k^m(s)
\ge
\lambda_k
\int_c^{c+t-\tau(a+t)\wedge t}
G_k(u)\der u.
\]
Similarly, using \eqref{eq:V} in place of \eqref{eq:A},
\[
\limsup_{m\to\infty} \frac{1}{m}\int_0^{t} 1_{B_{t-s}}\left
(w^m(s), p_{k, A_k^m(s)}^m\right) \der E_k^m(s)
\le
\lambda_k \int_c^{c+t-\tau(a+t)\wedge t} G_k(u)\der u.
\]
Hence \eqref{eq:MainPart} holds, as desired.
\end{proof}

\bibliographystyle{acmtrans-ims}

%
\end{document}